\theoremstyle{definition}
\newtheorem{theorem}{Theorem}[section]
\newtheorem{definition}{Definition}[section]
\newtheorem{lemma}[theorem]{Lemma}
\newtheorem{proposition}[theorem]{Proposition}
\newtheorem{corollary}[theorem]{Corollary}
\newtheorem{remark}[theorem]{Remark}
\newtheorem{example}[theorem]{Example}
\numberwithin{equation}{section}
\newcommand{\mc}{\mathcal}
\newcommand{\mb}{\mathbb}
\newcommand{\xra}{\xrightarrow}
\newcommand{\ra}{\rightarrow}
\newcommand{\rra}{\rightrightarrows}
\newcommand{\ghta}{(G,H,\tau, \alpha)}
\newcommand{\hrtag}{H \rtimes_{\alpha} G}
\newcommand{\Lrrw}{\Longrightarrow}
\newcommand{\widga}{\widetilde \gamma}
\DeclareMathOperator{\pr}{pr}
\DeclareMathOperator{\ad}{ad}
\DeclareMathOperator{\Ad}{Ad}
\DeclareMathOperator{\At}{At}
\DeclareMathOperator{\Mor}{Mor}
\DeclareMathOperator{\Obj}{Obj}
\def\og{\leavevmode\raise.3ex\hbox{$\scriptscriptstyle\langle\!\langle$~}}
\def\fg{\leavevmode\raise.3ex\hbox{~$\!\scriptscriptstyle\,\rangle\!\rangle$}}
\begin{document}
	
	\title
	{Atiyah sequence and Gauge transformations of a principal $2$-bundle over a Lie groupoid}
	
	\author[Saikat Chatterjee]{Saikat Chatterjee}
	\author[Adittya Chaudhuri]{Adittya Chaudhuri}
	\address{School of Mathematics,
		Indian Institute of Science Education and Research Thiruvananthapuram,
		Maruthamala P.O., Vithura, Kerala 695551, India}
	\email{saikat.chat01@gmail.com,adittyachaudhuri15@iisertvm.ac.in}

	\author[Praphulla Koushik]{Praphulla Koushik}
	
	\address{School of Mathematics,
		IISER Pune, Dr. Homi Bhabha Road, Pashan, Pune, 411008, India}
	\email{koushik16@iisertvm.ac.in, praphullakowshik@gmail.com}
	
	\subjclass[2010]{Primary 53C08, Secondary 22A22, 58H05}
	
	\keywords{principal $2$-bundles, Atiyah sequence, connections, gauge transformations, Lie groupoids}

	\begin{abstract} 
		In this paper, a notion of a principal $2$-bundle over a Lie groupoid has been introduced.
		For such principal $2$-bundles, we produced a short exact sequence of VB-groupoids, namely, the Atiyah sequence. Two notions of connection structures viz. strict connections and semi-strict connections on a principal $2$-bundle arising respectively, from a retraction of the Atiyah sequence and a retraction up to a natural isomorphism have been introduced.
		We constructed a class of principal $\mb{G}=[G_1\rra G_0]$-bundles and connections  from a given principal $G_0$-bundle $E_0\ra X_0$ over $[X_1\rra X_0]$ with connection.
		An existence criterion for the connections on a principal $2$-bundle over a proper, \'etale Lie groupoid is proposed. The action of the $2$-group of gauge transformations on the category of strict and semi-strict connections has been studied. Finally we noted 
		an extended symmetry of the category of semi-strict connections. 
	\end{abstract}

	\maketitle
	\addtocontents{toc}{\setcounter{tocdepth}{1}} 
	\tableofcontents
	\section{Introduction}
	The notion of (traditional) principal bundles and their connection structures over a manifold plays a central role at the interface of geometry and physics. 	In particular, a connection or, in the physics lexicon, a  gauge field describes the dynamics of a particle. In the last decade or so, higher gauge theories were developed as frameworks to describe the dynamics of string-like extended `higher dimensional objects'. A higher gauge theory typically involves suitable categorified versions of `spaces'. For example, a manifold is replaced by a Lie groupoid or a category internal to smooth spaces, and a Lie group is replaced by a Lie $2$-group, a smooth map is replaced by a smooth functor, and so on, and an appropriate notion of a connection consistent with this categorification.  In this context, intuitively, a principal $2$-bundle would be understood as a  smooth functor $\pi\colon \mb{E}\ra \mb{B}$ between categories internal to smooth spaces with the structure group a suitably categorified version of a Lie group (weak or strict or coherent Lie $2$-group). The precise descriptions of the `categorical spaces' depend on the framework. Among the earliest works in this direction, we refer to the following papers  \cite{MR2709030, MR2342821, MR2068522, baez2004higher, MR2068521},	
	and the other papers cited therein. The list is, of course, far from complete. 		
	
	More recently, Wockel in \cite{MR2805195} introduced a notion of semi-strict principal $2$-bundles over a discrete smooth $2$-space and describes their classification up to Morita equivalence by non-abelian C\v ech cohomology. A further generalization of Wockel's framework of principal $2$-bundles can be found in   \cite{MR2800361}. 	
	Among other approaches to principal $2$-bundles, one can mention non-abelian bundle gerbes of Aschieri, Cantini, and Jur\v co \cite{MR2117631},	and $G$-gerbes \cite{MR2493616}	of Laurent-Gengoux, Sti\'{e}non, and Xu. \cite{MR3089401} investigate the relation between non-abelian bundle gerbes of \cite{MR2117631} and principal $2$-bundles over a discrete smooth $2$-space. Ginot and Sti\'{e}non introduced the notion of a principal $2$-bundle over a Lie groupoid $\mb{X}$ with a strict structure Lie $2$-group   as 
	a  Hilsum \& Skandalis  generalized morphism of Lie $2$-groupoids $\mb{X}\ra \mb{G},$ where both $\mb{X}$ and $\mb{G}$ are treated as Lie $2$-groupoids  \cite{MR3480061}. In particular, the paper shows that a $G$-gerbe is the same as (up to a Morita equivalence) a principal automorphism $2$-group bundle.

	The connections structure on the principal $2$-bundles over a discrete smooth $2$-space and its various aspects are also well studied. Some of the works which will be relevant 
	in this context, 	are Breen-Messing \cite{MR2183393}, Baez-Schreiber \cite{MR2342821, schreiber2005loop} , Jurco-Samann-Wolf \cite{MR3351282}, Aschieri-Cantini-Jurco  \cite{MR2117631}, Gengoux-Stienon-Xu  \cite{MR2493616}. In \cite{MR3894086} Waldorf introduced the notion of a  connection on Wockel's principal $2$-bundle. 
	
	Some of the papers, which discuss higher gauge theory over path-space groupoids are  \cite{MR3126940, MR3504595, MR3213404, MR2764890}. For higher gauge theories, which go beyond $2$-spaces, we refer to \cite{MR3548195, bakovic2009simplicial, bakovic2008bigroupoid} for a framework involving  Kan simplicial manifolds, and more generally, for  $\infty$-topos theory framework, we refer to  \cite{fiorenza2011cech,   MR3423073, MR3385700, schreiber2013differential}.

	In this paper, we introduce a notion of a principal $2$-bundle $\mb{E}:=[E_1\rra E_0]$ over a Lie groupoid $\mb{X}:=[X_1\rra X_0],$	 having a structure group a strict Lie $2$-group $\mb{G}=[G_1\rra G_0],$ as a smooth functor $\mb{E}\ra \mb{X}$ with a functorial  action $\mb{G}\times \mb{E}\ra \mb{E}$ of $\mb{G}$ on $\mb{E}$ such that we have  $G_1$ and  $G_0$-bundles 
	$E_1\ra X_1$ and $E_0\ra X_0$ respectively over $X_1, X_0.$ The definition of $2$-principal bundle we consider here similar to the one in  \cite{MR3126940} with suitable smoothness conditions, and the definition is slightly stronger than that of Ginot and Sti\'{e}non in  \cite{MR3480061}. 
	
The definition of a $G$-bundle over a Lie groupoid $[X_1\rra X_0]$,  introduced in \cite{MR2270285}, can be viewed as a special case of ours. The Atiyah sequence associated to a principal $G$-bundle of \cite{MR2270285} have been studied in \cite{biswas2020chern,MR3150770}. The main purpose of this paper is to explore the connections and gauge transformations on a strict Lie $2$-group bundle over a Lie groupoid in terms of the Atiyah sequence. 
	Recall that in the traditional set-up, with a principal $G$-bundle $P\ra M$ one can associate the Atiyah sequence,
	a short exact sequence of vector bundles over $M$  \cite{MR86359},
	\begin{equation}\nonumber
		0\ra {\rm Ad}(P)\xra{j^{/G}} {\rm At}(P)\xra{\pi_{*}^{/G}} TM\ra 0.
	\end{equation}
	Here $L(G)$ denote the Lie algebra of $G,$ ${\rm Ad}(P):=(P\times  L(G))/G\ra M$ and  ${\rm At}(P):= TP/G\ra M$ are respectively the adjoint bundle and the Atiyah vector bundle of the principal $G$-bundle $\pi\colon P\ra M.$ Then a retraction of the Atiyah sequence is a connection on $P\ra M$. We show that associated to a principal $2$-bundle $\mb{E}:=[E_1\rra E_0]$ over a Lie groupoid $\mb{X}:=[X_1\rra X_0],$ we have a short exact sequence of VB-groupoids (for the discussion on VB-groupoids, we refer to \cite{MR3696590, MR3744376}).  Then a `strict connection' on the principal $2$-bundle  is defined as a retraction of the short exact sequence of the VB-groupoids. 
	However,  enrichment is more interesting when we impose an obstruction, given by   a certain natural isomorphism, on the retraction. The corresponding structure thus obtained will be called a `semi-strict connection'. Just as a side note on our terminology,  we remark here that we reserve the word `weak connection' for a more general set-up of a simplicial principal bundle obtained from a principal $2$-bundle as above, which we will  pursue in our subsequent work. The category of gauge transformations on our principal $2$-bundles naturally defines a  $2$-group, which acts on the category of strict and semi-strict connections. We show that the category of semi-strict connections enjoys an `extended gauge symmetry.'
	In particular, considering the category of semi-strict connections on a $2$-bundle over a discrete Lie groupoid $[X\rra X],$ this extended gauge symmetry offers a natural interpretation of the gauge transformation of the connection $1$-forms in higher BF theories (for example, see \cite{MR2661492, martins2011fundamental, MR2764890}).

	Along the course of this paper, we illustrate our constructions and results with several examples and highlight the relations with other relevant works. We believe that the framework provides a general treatment of connections on Lie $2$-group bundles over a Lie groupoid.  We show that given a Lie group $G$-bundle over a Lie groupoid $[X_1\rra X_0]$ with connection, in the sense of \cite{MR2270285},  one can construct a Lie $2$-group  $\mb{G}$ bundle with a connection, with $\Obj(\mb{G})=G.$ Moreover, the construction gives us a criterion for the existence of strict and semi-strict connections on a principal $2$-bundle over a proper, \'etale Lie groupoid (or an orbifold). As an outcome, we prove the existence of connections on a Lie $2$-group bundle over a discrete Lie groupoid (compare,  \cite[Theorem $5.2.14$]{MR3894086}).   We also emphasize that to the best of our knowledge, the approach of the Atiyah sequence has not been explored in the higher gauge theory literature, other than a casual reference in the lecture notes \cite{stevenson}. Particularly one interesting outcome of this approach is the notion of semi-strict connections on a Lie $2$-group bundle.  
Having said that, we have not studied Chern-Weil theory or Morita invariance of our construction for this paper. In a sequel of this paper, we intend to study these topics in the set-up of simplicial principal bundles. 

\subsection*{The outline and organization of the paper} \Cref{$2$-groupsCrossedmodules} recalls the definition of a strict Lie $2$-group,  its correspondence with a Lie crossed module, and some properties and identities of the associated Lie $2$-algebra.

In \Cref{$2$-groupbundleoverLiegroupoid} we introduced the definition of a principal 
$2$-bundle over a Lie groupoid and constructed a principal  $2$-bundle, namely a decorated bundle over a Lie groupoid from a Lie group bundle over the same. We introduce a notion of a categorical connection and characterize the decorated principal bundles. We also explored the relationship with a `Lie groupoid $G$-extension'.

In \Cref{Atiyahsequencefor$2$-groupbundle} we show that a principal $2$-bundle over a Lie groupoid produces a short exact sequence of VB-groupoids, namely the Atiyah sequence. 

In \Cref{Connection on principal $2$-bundles over Lie groupoids}	we introduce the strict and semi-strict connections on a principal $2$-bundle, respectively, as a retraction of the associated Atiyah sequence and a retraction up to a certain natural isomorphism. We express the strict and semi-strict connections in terms of  Lie $2$-algebra valued differential forms. Further, we give an explicit construction of the connection structure on a decorated bundle and obtain a criterion for the existence of a connection on a principal $2$-bundle over an orbifold. 

In \Cref{Gauge 2 group of a principal 2 bundle}, we introduce the notion of the  $2$-group of the gauge transformations on a principal $2$-bundle. The gauge $2$-group naturally acts on the categories of strict and semi-strict connections. We observe an extended gauge symmetry of the category of semi-strict connections, and we relate to the gauge transformations of the connection $1$-forms in higher BF theories. 

\textbf{Notations and conventions}: Here, we fix some conventions and notations which will  be followed throughout this paper. 

All manifolds assumed to  be the smooth, second countable, Hausdorff. Let $\rm Man$ be category of such manifolds.  For a smooth map $f\colon M\to N,$ the differential at $m\in M$ will be denoted as
$f_{*,m}\colon T_m M\ra T_{f(m)}N.$ A smooth right (resp. left) action of a Lie group $G$ on a smooth manifold $P$ will be denoted as $(p, g)\mapsto p g$ (resp. $(g, p)\mapsto g p$), for $g\in G$ and $p\in P$. The corresponding differentials $T_pP\ra T_{p g}P$ or $T_p P\ra T_{g p }P$ will be denoted respectively 	as $v\mapsto v\cdot g$ or $v\mapsto g\cdot v,$ for $v\in T_p P$.  For a fixed $p\in P$, let $\delta_p\colon T_eG\ra T_pP$ denote the differential of the map $G\ra P$,  $g\mapsto pg$, at the identity element. We will denote the fundamental vector field corresponding to 
an element $B\in L(G)$ evaluated at $p$ by $\delta_p(B)$.

We assume our Lie groups to be matrix groups for computational simplicity.  For a Lie group $G$, we denote its Lie algebra by 
$ L(G)$.

In any category, $s, t$ will denote the source and target maps, and the composition of a pair of morphisms $f_2, f_1$  will be denoted as $f_2\circ f_1,$ where $t(f_1)=s(f_2).$	

By a \textit{Lie groupoid} we mean a groupoid object in $\rm Man,$ whose source, target maps are surjective submersions.   We use the blackboard bold notation to denote Lie groupoids; that is, $\mb{E}$ for $[E_1\rra E_0]$ and so on. 
Given a Lie groupoid $\mb{X}=[X_1\rra X_0],$ we have the associated \textit{tangent Lie groupoid}, denoted $T\mb{X}=[TX_1\rra TX_0],$ structure maps of $T\mb{X}$ are given by the differentials of the respective structure maps of $\mb{X}.$ 	 In particular if $(\gamma_2, X_2), (\gamma_1, X_1)$ are composable morphisms in $T\mb{X},$ then we denote $(\gamma_2, X_2)\circ (\gamma_1, X_1)=\bigl(m(\gamma_2, \gamma_1), m_{* (\gamma_2, \gamma_1)}(X_2, X_1)\bigr)$ as $(\gamma_2\circ \gamma_1, X_2\circ X_1),$ where $m$ is the composition map of the Lie groupoid $\mb{X}.$

\section{Lie $2$-groups, Lie crossed modules}\label{$2$-groupsCrossedmodules}
In this section, we briefly overview the notions of Lie $2$-groups and Lie crossed modules. We then describe a correspondence between the two. This section does not contain any new material. For further reading on this topic, we refer to several papers by Baez, Schreiber, Lauda, Crans and others, \cite{{MR2342821}, {MR2068521}, {MR2068522}, {baez2004higher}, {MR3126940}, {MR3504595}}. 

\begin{definition}[strict $2$-category {\cite{MR1291599}}]
	A \textit{ strict $2$-category}  $\mc{D}$, consists of the following data,
	\begin{enumerate}
		\item a collection of objects,
		\item a small category $\mc{D}(x,y)$ for each pair of objects $(x,y)$ in $\mc{D}$,
		\item a functor $\{1\}\ra \mc{D}(x,x)$ for each object $x$ in $\mc{D} $,
		\item a functor $\circ_{xyz}:\mc{D}(x,y)\times\mc{D}(y,z)\ra \mc{D}(x,z)$ for each triple $(x,y,z)$ of objects in $\mc{D}$,
	\end{enumerate}
	satisfying the usual strict  associativity and unital compatibility conditions.
\end{definition}

\begin{definition}
	A \textit{strict Lie $2$-group} $\mb{G}=[G_1 \rightrightarrows G_0]$ is a group object in the category of Lie groupoids.
\end{definition}

Though there are more general versions of Lie $2$-groups, as we will be working with only strict Lie $2$-groups, by a Lie $2$-group we always mean a strict Lie $2$-group in this paper. For a more general notion of Lie $2$-groups, we refer the reader to \cite{MR2068521}.

\begin{remark}One can show that a  Lie $2$-group is a Lie groupoid $\mb{G}$ equipped with a morphism of Lie groupoids $\otimes: \mb{G} \times \mb{G} \rightarrow \mb{G}$ such that $\otimes$ induce Lie group structures on both 
	$\Obj(\mb{G})$ and $\Mor(\mb{G})$, and all the structure maps are Lie group homomorphisms. 
	Let $k_2, k'_2\in \Mor(\mb{G})$ and $k_1, k'_1\in \Mor(\mb{G})$	be pairs of composable morphisms. Let ${}^{-1}$ and $\circ$ respectively denote the group inverse functor and the composition. Then 
	\begin{equation}\label{E:Identitiesechangeinverse}
		\begin{split}
			&(k_2\otimes k_1)\circ (k'_2\otimes k'_1)=(k_2\circ k'_2)\otimes (k_1\circ k'_1),\\
			&(k_2\circ k_1)^{-1}={k_2}^{-1}\circ {k_1}^{-1}.		
		\end{split}
	\end{equation}
\end{remark}

Another description of Lie $2$-group is given by a Lie crossed module which we define below:
\begin{definition}
	A \textit{Lie crossed module} is a $4$-tuple 
	\[\ghta\]where, 
	$G,H$ are Lie groups, $\alpha:G\times H\ra H$ is a smooth map giving an action of $G$ on $H$, and  $\tau:H\ra G$ is a morphism of Lie groups such that the following conditions are satisfied:
	\begin{equation}\label{E:Peiffer}
		\begin{split}
			& \tau(\alpha(g,h))=g\tau(h)g^{-1} \, \textit {\rm for all} \, (g,h) \in G\times H,\\
			& \alpha(\tau(h),h')=hh'h^{-1} \, \textit {\rm for all}\, h,h'\in H.
		\end{split}
	\end{equation}
\end{definition}

The conditions in \Cref{E:Peiffer}  are called \textit{Peiffer identities}.

\subsection{Lie $2$-group associated to a Lie crossed module}\label{Lie 2 group from Lie crossed module}
Given a Lie crossed module $(G, H, \tau, \alpha)$, the associated  Lie $2$-group is given by the Lie groupoid $ \mb{G}=[H \rtimes_{\alpha} G \rightrightarrows G],$ where $\hrtag$ denotes the semidirect product of groups $H$ and $G$ with respect to the action $\alpha$ of $G$ on $H.$ The various maps and operations on this Lie $2$-group are as listed below.
\begin{itemize}
	\item the source map is given by $s(h,g)=g$,
	\item the target map is given by $t(h,g)=\tau(h)g$
	\item the composition map is given by $m((h_2, g_2),(h_1, g_1))=(h_2 h_1,g_1)$,
	\item the identity map is given by $1_g=(e_H,  g)$
	\item the inverse map is given by $i(h,g)=(h^{-1},\tau(h)g)$
	\item the group properties of  $H\rtimes_{\alpha} G$ are those of the standard semidirect product of the groups, that is 
	the bi-functor $\otimes: \mb{G} \times\mb{G} \rightarrow \mb{G}$ is defined as 
	\begin{equation}\label{E:grouppro}
		\begin{split}
			&\otimes_0: (g_1, g_2) \mapsto g_1g_2,
			\\
			&\otimes_{1}:  ((h_2, g_2), (h_1, g_1)) \mapsto (h_2 \alpha(g_2,h_1), g_2 g_1),
		\end{split}
	\end{equation}
\end{itemize}
whereas  the group inverse and the identity elements are respectively   given by $(h,g)^{-1}=(\alpha(g^{-1}, h^{-1}), g^{-1})$ and  $(e_H,e_G)$.
It is a routine verification that  $\mb{G}$ is a Lie $2$-group. We call $\mb{G}$ to be the \textit{Lie $2$-group associated  to the Lie crossed module} $(G,H,\tau, \alpha)$.
\subsection{Lie crossed module associated to a Lie $2$-group}\label{Lie crossed module from a Lie2group}
Let $\mb{G}=[G_1\rra G_0]$ be a Lie $2$-group. Consider the Lie group 
$\ker(s)=\{\gamma\in G_1: s(\gamma)=1_{G_0}\}$, and the 
morphism $\alpha:G_0\times \ker(s)\ra \ker(s)$ defined as $(a,\gamma)\mapsto 1_a\cdot \gamma\cdot 1_{a^{-1}}$. Then, the $4$-tuple
\[(G_0, \ker(s), t|_{\ker(s)}:\ker(s)\ra G_0,\alpha:G_0\times \ker(s)\ra \ker(s)),\]
defines a Lie crossed module. 
We call the above Lie crossed module to be the \textit{Lie crossed module associated to the Lie $2$-group} $\mb{G}$.

\begin{example}\label{Ex:Discretecrossedmodule}
	The Lie crossed module associated to the discrete  Lie $2$-group $[G\rra G]$ is $(G, \{e\})$ with trivial $\tau$ and $\alpha.$
\end{example}

\begin{example}\label{Ex:ordinary}
	At the other extreme of the last example, one can consider the Lie $2$-group associated to a crossed module  $(\{e\}, H)$ with trivial $\tau$ and $\alpha={\rm Id}\colon H\to H.$ It is obvious that the Lie $2$-group	is the single object Lie $2$-group $[H\rra {e}]$
	
\end{example}
\begin{example}\label{Ex:singleobjlie2iso}
	The Lie $2$-group associated to the Lie crossed module $(G, G, \tau, \alpha)$ with $\tau\colon G\ra G$ being the identity map, and $\alpha\colon G\times G\ra G$ the action of $G$ on itself by conjugation is the pair groupoid $[G\times G\rra G]$. 
\end{example}

\begin{example}\label{Ex:isocrossedmodule}
	To any simply connected Lie group $G$ one associates a Lie crossed module  $\bigl({\rm Aut}(G), G, \tau, \alpha\bigr),$ where $\tau\colon G\ra {\rm Aut}(G)$ sends an element $g\in G$
	to its inner-automorphism, and $\alpha\colon {\rm Aut}(G)\ra {\rm Aut}(G)$ is the identity. The corresponding Lie $2$-group is called the \textit{automorphism $2$-group} of $G.$	\end{example}

We will not spend much time on examples of Lie $2$-groups and Lie crossed modules, which abound in higher gauge theory literature. An interested reader can go through the articles previously mentioned.

Given a Lie $2$-group $\mb{G}=[G_1\rra G_0]$ we can associate the Lie groupoid $[L(G_1) \rra L(G_0)]$ whose structure maps are given by taking differentials of the structure maps of $\mb{G}$ at the identity.
\begin{remark}\label{Remark:Interchange law of Lie 2-algebra}
	The vector space structures on $L(G_1)$ and $L(G_0)$ are compatible with the composition of $[L(G_1)\rra L(G_0)]$. 
\end{remark}
In particular, if the Lie  $2$-group $\mb{G}=[G_1 \rra G_0]$ is obtained from a Lie crossed module  $(G,H,\tau:H\ra G, \alpha:G\times H\ra H)$ 
then the  groupoid  $[L(G_1) \rra L(G_0)]$ is described as   $[L(H)\oplus L(G)\rra L(G)]$ having  structure maps
\begin{itemize}
	\item the source map is given by $s(A,B)=B$,
	\item the target map is given by $t(A,B)=\tau(A)+B$,
	\item the composition map is given by $m((A_2,B_2),(A_1,B_1))=(A_2+A_1,B_1)$, where 
	$s(A_2,B_2)=t(A_1,B_1)$,
	\item the identity map is given by $1_B=(0, B)$,
	\item the inverse map is given by $i(A,B)=(-A,\tau(A)+B)$.
\end{itemize} 
\begin{remark}
	It should be noted that $L(G_1)$ splits into the direct sum $L(H)\oplus L(G)$ only as a vector space. It is not a direct sum of Lie algebras.
\end{remark}
The differential on the action $\alpha$ produces an action $\alpha_{*,( e, e)}\colon L(G)\times L(H)\ra L(H)$ of $L(G)$ on $L(H)$ as a Lie algebra derivation, and the commutators  on $L(G)$ and $L(H\rtimes G)$ are respectively  given as
\begin{equation}\label{E:Commutator}
	\begin{split}
		(B_1, B_2)&\mapsto [B_1, B_2],\\
		\bigl((A_1, B_1), (A_2, B_2)  \bigr)&\mapsto [(A_1, B_1), (A_2, B_2)]\\
		& :=\bigl([A_1,A_2]+\alpha_{*,( e, e)} (A_1, B_2)-\alpha_{*,( e, e)} (A_2, B_1), [B_1, B_2]\bigr),
	\end{split}
\end{equation}
for all $A_1, A_2\in L(H),  B_1, B_2\in L(G)$. The Lie groupoid $[L(G_1)\rra L(G_0)]$ is the usual strict Lie $2$-algebra of the Lie $2$-group $[G_1\rra G_0]$.
We will not spell out the general definition of a Lie $2$-algebra. For a detailed discussion on Lie $2$-algebras and the associated Lie $2$-algebra of a Lie $2$-group we refer to \cite{MR2068522}.  

Let $(G,H,\tau:H\ra G, \alpha:G\times H\ra H)$ be  the Lie crossed module associated to the  Lie $2$-group $\mb{G}=[G_1 \rra G_0].$ Consider the smooth map 
$\alpha:G\times H\ra H.$ For a fixed $g\in G,$ we have the map $\alpha(g)\colon H\to H, h\mapsto \alpha(g, h)$ and the differential of this map at the identity element of $H$
gives the Lie algebra homomorphism
\begin{equation}\label{E:alphadifffixedg}
	\alpha(g)_{*, e_H}\colon L(H)\to L(H).
\end{equation}
On the other hand for a fixed $h\in H,$ we have the map ${\bar \alpha}(h)\colon G\to H, g\mapsto \alpha(g, h)$ and by taking the differential of this map at the identity element of $G$
we obtain the linear map,
\begin{equation}\label{E:alphadifffixedh}
	{\bar \alpha}(h)_{*, e_G}\colon L(G)\to T_hH.
\end{equation}

\begin{remark} \label{Remark:notations}
	To simplify our notations and avoid notational cluttering, henceforth, we adopt the following conventions.
	\begin{itemize}
		\item $\tau_{*, e_H}$ will also be denoted as $\tau.$ Similarly  $\alpha(g)_{*, h}$, $\alpha(g)_{*, e_H}$ and ${\bar \alpha}(h)_{*, e_G}$ will be denoted respectively as $\alpha(g)_h, \alpha(g)$ and ${\bar \alpha}(h).$
		\item To avoid cluttering of  parentheses we often write $\alpha(g)$ etc as $\alpha_g$ etc.
		\item Unless necessary, we do not notationally distinguish between identity elements of various groups. The common notation for the same will be $e.$
\end{itemize} 
\end{remark}

For future reference, we note the following identities. The identities are technical and would not be required anywhere else other than the proofs of \Cref{cor:stricconnto semi} and \Cref{lemma:Funcdecconn}. So a reader may postpone its reading till then.

\begin{lemma}\label{Lemma:Identityfor later} 		
	Let $\ghta$ be a Lie crossed module. 
	\begin{enumerate}
		\item Then for any $h_2, h_1\in H$ and $B\in L(G),$ we have 
		$$h_2\cdot \biggl({\bar \alpha}(h_2^{-1})({\rm ad}_{\tau(h_1)}B )\biggr)+h_1\cdot \biggl({\bar \alpha}(h_1^{-1}) (B) \biggr)=h_2h_1 \cdot \biggl({\bar \alpha}(h_1^{-1}h_2^{-1}) (B)\biggr).$$
		\item Then for any $g\in G, h\in H, A\in L(H)$
		$$\alpha_{g^{-1}}(h^{-1})\,\cdot \bigl({\bar \alpha}_{\alpha_{g^{-1}}(h)}   {({\ad}_{g^{-1}}(\tau(A)\bigr)}
		+\alpha_{g^{-1}}\bigl(A\bigr) =\alpha_{g^{-1}}(\ad_{h^{-1}}(A)).$$
	\end{enumerate}
	\begin{proof}
		\begin{enumerate}
			\item Note that the required identity follows directly as a infinitesimal version of the equation  
			$$\bigg[h_2 \biggl({\bar \alpha}(h_2^{-1})({\Ad}_{\tau(h_1)}(g) )\biggr)\bigg] \bigg[h_1 \biggl({\bar \alpha}(h_1^{-1}) (g) \biggr)\bigg]=h_2h_1 \biggl({\bar \alpha}(h_1^{-1}h_2^{-1}) (g)\biggr)$$ for any $h_2, h_1\in H$ and $g\in G$. 
			In particular, using \Cref{E:Peiffer} the expression \[h_2 \bigg[\biggl({\bar \alpha}(h_2^{-1})({\Ad}_{\tau(h_1)}(g) )\biggr)\bigg]\] reduces to  $h_2 \bigg[h_1 \alpha(g)(h_1^{-1} h_2^{-1} h_1) h_1^{-1}\bigg]$.Then a straight forward calculation will prove the identity. 			
			
			\item A straightforward verification gives  $$\big[\alpha_{g^{-1}}(h^{-1})\,\cdot \bigl({\bar \alpha}_{\alpha_{g^{-1}}(h)}   {({\Ad}_{g^{-1}}(\tau(h')\bigr)}\bigr]
			\big[\alpha_{g^{-1}}\bigl(h'\bigr)\big] =\alpha_{g^{-1}}(\Ad_{h^{-1}}(h')),$$ for any $h, h'\in H, g\in G.$ Then the identity follows as the infinitesimal version of the above equation.
		\end{enumerate}
	\end{proof}
\end{lemma}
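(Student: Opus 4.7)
The plan is to derive each Lie-algebra identity by differentiating a corresponding identity among elements of the Lie groups at the appropriate identity parameter. The main tools are the two Peiffer identities from \Cref{E:Peiffer}, the action axiom $\alpha(g_1g_2,h)=\alpha(g_1,\alpha(g_2,h))$, and the fact that for each fixed $g\in G$ the map $\alpha(g)\colon H\to H$ is a group homomorphism.

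For part (1), I would promote $B$ to a one-parameter subgroup $g_t=\exp(tB)\in G$ and aim to prove the group-level identity
\begin{equation*}
\bigl[h_2\, \bar\alpha(h_2^{-1})(\Ad_{\tau(h_1)}(g_t))\bigr]\cdot \bigl[h_1\,\bar\alpha(h_1^{-1})(g_t)\bigr]=h_2 h_1 \, \bar\alpha(h_1^{-1}h_2^{-1})(g_t).
\end{equation*}
Rewriting the inner expression on the left as $\alpha(\tau(h_1) g_t \tau(h_1)^{-1}, h_2^{-1})$ and applying the action axiom separates the $\tau(h_1)^{\pm 1}$ contributions from the $g_t$ contribution. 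The Peiffer identity $\alpha(\tau(h), h')=hh'h^{-1}$ converts the $\tau(h_1)^{\pm 1}$ pieces into conjugations by $h_1^{\pm 1}$, and the remaining $\alpha(g_t,\cdot)$ factors on the left combine through the homomorphism property into $\alpha(g_t, h_1^{-1}h_2^{-1})$, matching the right-hand side. Since both sides evaluate to the identity $e\in H$ at $t=0$, differentiating at $t=0$ and interpreting $h_i\cdot(-)$ as left translation (per \Cref{Remark:notations}) yields the stated Lie-algebra identity.

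Part (2) follows by the same recipe, replacing $A$ with a curve $h'_t=\exp(tA)\in H$ and targeting the group identity
\begin{equation*}
\bigl[\alpha_{g^{-1}}(h^{-1})\cdot \bar\alpha_{\alpha_{g^{-1}}(h)}(\Ad_{g^{-1}}(\tau(h'_t)))\bigr]\cdot\bigl[\alpha_{g^{-1}}(h'_t)\bigr]=\alpha_{g^{-1}}(\Ad_{h^{-1}}(h'_t)).
\end{equation*}
The middle factor equals $\alpha(g^{-1}\tau(h'_t)g,\alpha_{g^{-1}}(h))$; the action axiom reduces this to $\alpha_{g^{-1}}(\alpha(\tau(h'_t), h))$, and then Peiffer turns $\alpha(\tau(h'_t),h)$ into $h'_t h {h'_t}^{-1}$. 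The homomorphism property of $\alpha_{g^{-1}}$ now collapses the left-hand side into $\alpha_{g^{-1}}\bigl(h^{-1}\cdot h'_t h{h'_t}^{-1}\cdot h'_t\bigr)=\alpha_{g^{-1}}(h^{-1} h'_t h)=\alpha_{g^{-1}}(\Ad_{h^{-1}}(h'_t))$, which is the right-hand side. Both sides equal $e$ at $t=0$, so the identity follows by differentiating. The only real obstacle in either part is bookkeeping---keeping track of the placements and inversions of the various conjugations through repeated applications of the Peiffer and action axioms; once the group-level identity is assembled correctly, passing to the infinitesimal statement is routine.
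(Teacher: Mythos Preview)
Your proposal is correct and follows essentially the same approach as the paper: both parts are proved by establishing the corresponding group-level identity (with $g$ replaced by a curve $g_t=\exp(tB)$ in part (1) and $h'$ by $h'_t=\exp(tA)$ in part (2)) via the Peiffer identities and the action axiom, and then differentiating at $t=0$. Your write-up merely supplies a few more of the intermediate manipulations that the paper leaves as ``a straightforward calculation.''
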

\subsection{Adjoint actions of a Lie $2$-group }\label{SS:adjointLie 2 group}
Let $\mb{G}=[G_1\rra G_0]$ be a Lie $2$-group and  $\ghta$  the associated  Lie crossed module. It would be convenient to write down the adjoint actions for the object and morphisms groups in terms of a crossed module. As we have seen $G_0=G$ and $G_1=\hrtag.$ 

In order to simplify our calculation of the adjoint action for $\hrtag,$ we write ${{\rm Ad}}_{(h, g)}(h', g')$ as ${{\rm Ad}}_{(h, g)}(h', g')=[{{\rm Ad}}_{(h, g)}(h', e)][{{\rm Ad}}_{(h, g)}(e, g')]$ and then compute the bracketed terms on the right hand side separately. The group product in  \Cref{E:grouppro} gives
\begin{equation}\label{E:Adjongroups}
	\begin{split}
		&{{\rm Ad}}_{(h,\, g)}(h', e)=\biggl({\rm Ad}_{h}\bigl(\alpha_g (h ')\bigr),\, e\biggr), \\
		&{{\rm Ad}}_{(h,\, g)}(e, g')=\biggl(h\, \bigl({\alpha}_{({\rm Ad}_g(g '))}(h^{-1})\bigr) , \, {\rm Ad}_g (g')\biggr)=\biggl(h\, \bigl({\bar \alpha}_{h^{-1}}{({\rm Ad}_g(g '))}\bigr) , \, {\rm Ad}_g (g')\biggr),\\
		&{{\rm Ad}}_{(h, g)}(h', g')=[{{\rm Ad}}_{(h, g)}(h', e)][{{\rm Ad}}_{(h, g)}(e, g')].
	\end{split}
\end{equation}
Now given an element $(A, B)\in L(H)\oplus L(G)=L(G_1),$ we write $(A, B)=(A, 0)+(0, B).$ Then the  adjoint actions of $H\rtimes G$ on $A\in L(H)$ and $B\in L(G)$ are respectively  computed by the   formulae	$\ad_{(h, g)}(A):=\diff{{}}{t}\Ad_{(h, g)}{{\rm e}}^{t A}\big|_{t=0}$ and  $\ad_{(h, g)}(B):=\diff{{}}{t}\Ad_{(h, g)}{{\rm e}}^{t B}\big|_{t=0},$	
\begin{equation}\label{E:Adjonalgebras}
	\begin{split}
		&{\rm{ad}}_{(h,\, g)}(A, 0)=\biggl({\rm ad}_{h}\bigl(\alpha_g (A)\bigr),\, 0\biggr), \\
		&{\rm{ad}}_{(h,\, g)}(0, B)=\biggl(h\,\cdot \bigl({\bar \alpha}_{h^{-1}}{({\rm ad}_g(B))}\bigr) , \, {\rm ad}_g (B)\biggr),\\
		&{\rm{ad}}_{(h, g)}(A, B)=[{\rm{ad}}_{(h, g)}(A, 0)]+[{\rm{ad}}_{(h, g)}(0, B)].
	\end{split}
\end{equation}	 

The notations $\alpha_g, \bar \alpha_{h^{-1}}$ in \Cref{E:Adjonalgebras} should be interpreted as per  \Cref{Remark:notations} (third and fourth bullet points).


	\section{Principal $2$-bundles over Lie groupoids}\label{$2$-groupbundleoverLiegroupoid}
In this section, we introduce the notion of a principal $2$-bundle over a Lie groupoid. 	
The definition we introduce is a natural generalization of the classical definition.

\begin{definition}[Action of a Lie $2$-group on a Lie groupoid]\label{Definition:Lie 2 groupaction}
	An \textit{action of a  Lie $2$-group $\mb{G}$ on a Lie groupoid $\mb{X}$ } is defined as a morphism of Lie groupoids $\rho: \mb{X} \times \mb{G} \rightarrow \mb{X}$  such that 
	\begin{itemize}
		\item $\rho_0: X_0 \times G_0 \rightarrow X_0$,
		\item $\rho_1:X_1 \times G_1 \rightarrow X_1$,
	\end{itemize}
	are Lie group actions on manifolds $X_0$ and $X_1$ respectively.
\end{definition}
Following proposition is evident. 
\begin{proposition}\label{Prop:Adjtrans}
	Let $\mb{G}$ be a Lie $2$-group. 
	\begin{enumerate}
		\item There is an action of Lie $2$-group $\mb{G}$ on  
		Lie groupoid  $L(\mb{G})=[L(G_1) \rightrightarrows L(G_0)]$	by adjoint action. 
		\item Suppose $\mb{G}$ acts on a Lie groupoid $\mb{X}$. Then there is an action of $\mb{G}$ on  
		Lie groupoids $T\mb{X}=[TX_1 \rightrightarrows TX_0],$ given by the differential of the given action.  
	\end{enumerate}
\end{proposition}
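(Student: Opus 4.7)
The plan is to verify in each case that a natural family of smooth maps---the adjoint action in (1) and the tangent-lifted action in (2)---assembles into a functor of Lie groupoids, using the fact that the structure maps of a Lie $2$-group are Lie group homomorphisms and that the given action $\rho$ in (2) is already a functor. In both parts, the content reduces to differentiating identities that are already known at the level of Lie groups or of the ambient functor.

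For part (1), I would define the action on objects by the ordinary adjoint action $\Ad^{G_0}\colon L(G_0)\times G_0\to L(G_0)$ and on morphisms by $\Ad^{G_1}\colon L(G_1)\times G_1\to L(G_1)$. Smoothness, the unit axiom, and associativity of these Lie group actions are immediate. To show that the pair $(\Ad^{G_1},\Ad^{G_0})$ is a functor $L(\mb{G})\times\mb{G}\to L(\mb{G})$, one must check compatibility with source, target, unit and composition. Since $s\colon G_1\to G_0$ is a Lie group homomorphism, $s(\gamma\delta\gamma^{-1})=s(\gamma)\,s(\delta)\,s(\gamma)^{-1}$; differentiating in $\delta$ at the identity of $G_1$ yields $s_{*}\circ\Ad^{G_1}_{\gamma}=\Ad^{G_0}_{s(\gamma)}\circ s_{*}$, i.e.\ compatibility with $s$. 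The same argument works for $t$ and for the unit $1\colon G_0\to G_1$. For compatibility with composition, the first identity of \Cref{E:Identitiesechangeinverse} gives $\Ad^{G_1}_{\gamma_2\circ\gamma_1}(\delta_2\circ\delta_1)=\Ad^{G_1}_{\gamma_2}(\delta_2)\circ\Ad^{G_1}_{\gamma_1}(\delta_1)$ whenever the composites make sense; differentiating in the $\delta$'s and invoking \Cref{Remark:Interchange law of Lie 2-algebra} produces the required identity on the Lie $2$-algebra side.

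For part (2), I would define $T\rho_0\colon TX_0\times G_0\to TX_0$, $(v,g)\mapsto v\cdot g$, and $T\rho_1\colon TX_1\times G_1\to TX_1$, $(w,\gamma)\mapsto w\cdot\gamma$, the standard tangent-lifted actions of the Lie groups $G_0$ and $G_1$ on the manifolds $TX_0$ and $TX_1$. These are smooth Lie group actions. To see that they form a functor $T\mb{X}\times\mb{G}\to T\mb{X}$, fix composable $\gamma_2,\gamma_1\in G_1$ and composable smooth curves $\chi_2(u),\chi_1(u)$ in $X_1$; the functoriality of $\rho_1$ gives $\rho_1\bigl(\chi_2(u)\circ\chi_1(u),\,\gamma_2\circ\gamma_1\bigr)=\rho_1(\chi_2(u),\gamma_2)\circ\rho_1(\chi_1(u),\gamma_1)$, and differentiating at $u=0$ delivers the analogous identity for tangent vectors in $T\mb{X}\times\mb{G}$. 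Compatibility with source, target and units follows in the same way, by differentiating the identities expressing that $\rho$ is a morphism of Lie groupoids (and using that $T$ commutes with the structure maps since $T\mb{X}$ is defined by taking their differentials).

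The main obstacle, if any, is purely notational. In part (1) one must distinguish differentiation in the group argument of $\Ad$ (which would produce $\ad$) from differentiation in the Lie-algebra argument (which keeps $\Ad$ but yields a linear map in the second slot). In part (2) one should be explicit that $\mb{G}$ sits inside $T\mb{G}$ via the zero section, so that $T\mb{X}\times\mb{G}$ embeds in $T\mb{X}\times T\mb{G}$ and the restricted tangent action is well defined. Once this bookkeeping is carried out, both statements are short diagram chases requiring nothing beyond the group-object structure of $\mb{G}$ and the functoriality of $\rho$.
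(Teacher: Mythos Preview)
Your proposal is correct. The paper itself gives no proof of this proposition; it is introduced with the sentence ``Following proposition is evident'' and stated without argument. Your sketch is precisely the routine verification the authors are implicitly appealing to: differentiate the group-homomorphism property of the structure maps to get compatibility of $\Ad$ with source, target, and unit, and differentiate the interchange law (the identities in \Cref{E:Identitiesechangeinverse}) to get compatibility with composition; similarly, in part (2) differentiate the functoriality of $\rho$ to obtain functoriality of the lifted action. Your remark about embedding $\mb{G}$ in $T\mb{G}$ via the zero section is the right way to make sense of ``differential of the given action'' as an action of $\mb{G}$ (rather than $T\mb{G}$) on $T\mb{X}$, and is the only point that requires any care.
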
	
A weaker definition compared to the one given in \Cref{Definition:Lie 2 groupaction} also exists in literature; for example, in  \cite{MR2805195}, the identity and compatibility axioms of a group action at the object level hold up to an isomorphism. One can also generalize this action by replacing the category $\mb{X}\times \mb{G}$ 
by a `twisted product  category' $\mb{X}\rtimes_{\eta}\mb{G}$ of $\mb{X}$ and $\mb{G}$ with respect to certain map $\eta$, introduced in \cite{MR3213404}. 
While we will stick to the definition of action given in \Cref{Definition:Lie 2 groupaction} for this paper and not pursue either of the other two generalizations very seriously, we  recall the definition of a twisted product to relate  with the notion of `Lie groupoid $G$-extensions'.

Let $\mb{G}$ and $\mb{X}$ be as before. Consider a smooth map $\eta\colon \Mor(\mb{X})\times \Mor(\mb{G})\ra \Mor(\mb{X})$ satisfying, 
\begin{equation}\label{E:Contwist}
	\begin{split}
		\eta(\gamma, k)&\in {\rm Hom}_{\mb{X}}(x, y), 	\forall \gamma\in {\rm Hom}_f{\mb{X}}(x, y), \\
		\eta(\gamma_2\circ \gamma_1, k)&=\eta(\gamma_2, k)\circ \eta(\gamma_1, k),\\
		\eta(1_x, k)&=1_x\\
		\eta(\gamma, k_2\circ k_1)&=\eta(\eta(\gamma, k_2), k_1), \\ 
		\eta(\gamma, 1_g)&=\gamma.
	\end{split}
\end{equation}
Then we have a smooth category  $\mb{X}\rtimes_{\eta}\mb{G}$ with the following description \cite[Proposition $5.1$]{MR3213404}:
\begin{equation}
	\begin{split}
		&\Obj(\mb{X}\rtimes_{\eta}\mb{G})= \Obj(\mb{X})\times \Obj(\mb{G}),\\
		&\Mor(\mb{X}\rtimes_{\eta}\mb{G})= \Mor(\mb{X})\times \Mor(\mb{G}),\\
		&s(\gamma, k)=\bigl(s(\gamma), s(k)\bigr), t(\gamma, k)=\bigl(t(\gamma), t(k)\bigr)\\
		&(\gamma_2, k_2)\circ_{\eta} (\gamma_1, k_1)=\bigl(\gamma_2\circ \eta(\gamma_1, k_2), k_2\circ k_1\bigr).
	\end{split} 
\end{equation}
The category $\mb{X}\rtimes_{\eta}\mb{G}$ will be called the  \textit{$\eta$-twisted category}.

\begin{definition}[Twisted action of a Lie $2$-group on a Lie groupoid]\label{Definition:Lie 2 twistgroupaction}
	Let $\mb{X}$ be a Lie groupoid, $\mb{G}$ a Lie $2$-group and $\eta\colon \Mor(\mb{X})\times \Mor(\mb{G})\ra \Mor(\mb{X})$ a smooth map satisfying conditions in \Cref{E:Contwist}. 	An \textit{$\eta$-twisted action of  $\mb{G}$ on the Lie groupoid $\mb{X}$ } is defined as a smooth functor  $\rho\colon \mb{X} \rtimes_{\eta} \mb{G} \rightarrow \mb{X}$  such that 
	\begin{itemize}
		\item $\rho_0: X_0 \times G_0 \rightarrow X_0$,
		\item $\rho_1:X_1 \times G_1 \rightarrow X_1$,
	\end{itemize}
	are Lie group actions on manifolds $X_0$ and $X_1$ respectively.
\end{definition}
\begin{remark}\label{Remark:functorialitytwisted}
	\begin{enumerate}
		\item Note that if we take $\eta={\rm pr}_1$ in \Cref{Definition:Lie 2 twistgroupaction}  then we recover the usual direct product of categories .	
		\item Functoriality of $\rho: \mb{X} \rtimes_{\eta} \mb{G} \rightarrow \mb{X}$	implies $\rho\bigl(\gamma_2\circ \eta(\gamma_1, k_2), k_2\circ k_1\bigr)=\rho (\gamma_2, k_2)\circ \rho(\gamma_1, k_1)$ and following our convention for group actions  we write,
		$$\bigl(\gamma_2\circ \eta(\gamma_1, k_2)\bigr) (k_2\circ k_1)=(\gamma_2 k_2)\circ (\gamma_1 k_1).$$	
		
	\end{enumerate}
	
\end{remark}

\begin{definition} \label{Equivariant morphism of Lie gorupoid}
	Let $\mb{G}$ be a Lie $2$-group. Suppose $\mb{G}$ acts on a pair of Lie groupoids $\mb{X}$ and  $\mb{Y}.$ Then a morphism of Lie groupoids $F:=(F_1, F_0): \mb{X} \rightarrow \mb{Y}$ is said  to be $\mb{G}$-equivariant if $F_0$ is $G_0$-equivariant and $F_1$ is $G_1$-equivariant. 
	A smooth natural transformation  between two such functors 
	$\eta\colon F\Lrrw F'$  is called a \textit{$\mb{G}$-equivariant natural transformation}  if $\eta(x g)=\eta(x)1_g.$
\end{definition}
\begin{definition}[principal $\mb{G}$-bundle over a Lie groupoid] \label{Definition:principal $2$-bundle over Liegroupoid}
	Let $\mb{G}$ be a  Lie $2$-group.  A \textit{principal $\mb{G}$-bundle over a Lie groupoid} $\mathbb{X}$ is given by a morphism of Lie groupoids $\pi: \mb{E} \rightarrow \mb{X}$ along with an  action $\rho: \mb{E} \times \mb{G} \rightarrow \mb{E}$ of Lie $2$-group $\mb{G}$ on $\mb{E}$ such that, 
	\begin{itemize}
		\item $\pi_0\colon E_0 \rightarrow X_0$ is a principal $G_0$-bundle over $X_0$,
		\item $\pi_1\colon E_1 \rightarrow X_1$ is a principal $G_1$-bundle over $X_1$.
	\end{itemize}
	We will call $\mb{G}$ to be the  \textit{structure $2$-group of $\pi\colon\mb{E} \rightarrow \mb{X}$}. We will denote the above principal $\mb{G}$-bundle over the Lie groupoid $\mb{X}$  by $\mb{E}(\mb{X}, \mb{G})$. 
\end{definition}
If we replace the action functor $\rho\colon\mb{E} \times \mb{G} \rightarrow \mb{E}$ in \Cref{Definition:principal $2$-bundle over Liegroupoid}	by an $\eta$-twisted action $\rho\colon\mb{E} \rtimes_{\eta} \mb{G} \rightarrow \mb{E}$ for a smooth map $\eta\colon  \Mor(\mb{E})\times \Mor(\mb{G})\ra \Mor(\mb{E})$ satisfying conditions listed in	\Cref{E:Contwist}, then what we get will be called  a \textit{twisted principal $\mb{G}$-bundle over the Lie groupoid} $\mathbb{X}.$
\begin{remark}\label{Remark:OtherdefofLie2grp bundles}
	A slightly weaker version of a principal $2$-bundle over a Lie groupoid has been introduced in \cite[Definition 2.20]{MR3480061}. Our main purpose in this paper is to study the Atiyah sequences and gauge transformations on 
	the principal $2$-bundles over Lie groupoids, so it is essential that we adopt a framework that provides us a pair of ordinary principal bundles (for objects and morphisms) related by the functoriality conditions.
\end{remark}	

\begin{definition}[$2$-groupoid of principal $2$-bundles]\label{Definition:2-groupoidof$2$-groupbundle}
	Let $\mb{G}$ be a  Lie $2$-group. Let $\mb{E}(\mb{X},\mb{G})$ and  $\mb{E'}(\mb{X},\mb{G})$  be a pair of principal $\mb{G}$-bundles over a Lie groupoid $\mb{X}$.
	A \textit{morphism from $\mb{E}(\mb{X},\mb{G})$ to $\mb{E'}(\mb{X},\mb{G})$} is defined as a smooth $\mb{G}$-equivariant morphism $F:\mb{E} \rightarrow \mb{E'}$ such that the following diagram commutes up to nose
	\[
	\begin{tikzcd}
		\mb{E} \arrow[r, "F"] \arrow[d, "\pi"'] & \mb{E'} \arrow[ld, "\pi'"] \\
		\mb{X}                               &                  
	\end{tikzcd}.\]
	A \textit{$2$-morphism} from $F$ to $F'$ is defined as a smooth natural isomorphism $\eta\colon F \Longrightarrow F'$ such that  $\eta(pg)=\eta(p)1_g$ and $\pi_1(\eta(p))=1_{\pi_0(p)}$ for all $p \in E_0$ and $g \in G_0$. We have a strict $2$-groupoid, denoted ${\rm Bun}(\mb{X},\mb{G})$.
\end{definition}
\begin{definition}[Gauge $2$-group]\label{Remark:Gauge2-group}
	For an object $\mb{E}$ in $2$-groupoid ${\rm{Bun}}(\mb{X},\mb{G})$, $\mc{G}(\mb{E})$ will denote the strict automorphism $2$-group of  $\mb{E}$, and will be called the  \textit{Gauge $2$-group} of the principal $\mb{G}$-bundle $\mb{E}\ra \mb{X}$.
\end{definition}

\begin{example}
	A traditional principal $G$-bundle $\pi\colon E\to X$ over a manifold $X$ is same as a $[G\rra G]$-bundle $[E\rra E]$ over the Lie groupoid $[X\rra X].$
\end{example}

\begin{example}\label{E:Example of product bundle}
	Let $\mb{X}=[X_1\rra X_0]$ be a Lie groupoid and $\mb{G}=[G_1\rra G_0]$  a Lie $2$-group. Then we have the product principal $2$-bundle $\mb{X}\times \mb{G}=[X_1\times G_1\rra X_0\times G_0]$ over $\mb{X}.$
\end{example}

\begin{example}\label{E:Exampleprincipalpairlie2}
	For any Lie groupoid  $\mb{X}=[X_1\rra X_0]$ and a (traditional) principal $G$-bundle $\pi\colon E_0\ra X_0,$ we define a Lie groupoid $E_1=\big\{(p, \gamma, q)| \gamma\in X_1, p\in \pi^{-1}(s(\gamma)), q\in \pi^{-1}(t(\gamma))\big\}\rra E_0$ as follows. The source, target and composition maps are respectively given by $s(p, \gamma, q)=p, t(p, \gamma, q)=q$ and $(q, \gamma_2, r)\circ (p, \gamma_1, q)=(p, \gamma_2\circ \gamma_1, r)$. The Lie $2$-group $[G\times G\rra G]$ naturally acts on  $[E_1\rra E_0]$ by $(p, g)\mapsto p g$
	and $\bigl((p, \gamma, q), (g_1, g_2)\bigr)=(p g_1, \gamma, q g_2).$	
	Then we have a $[G\times G\rra G]$-bundle $[E_1\rra E_0]$ over $\mb{X}=[X_1\rra X_0],$ with obvious projection functor. 	
\end{example}
We will provide several other examples of principal $2$-bundles over  Lie groupoids. 

We recall the definition of a $G$-bundle over a Lie groupoid  \cite{MR2270285}.
In the following subsection, we will describe a construction of a principal $2$-bundle  from a given $G$-bundle over a Lie groupoid. The structure is general enough to produce a large class of examples relevant in many contexts. 

\begin{definition}[principal $G$-bundle over Lie groupoid] \label{definition:principal bundle over Lie groupoid}
	For a Lie group $G$, a \textit{principal $G$-bundle over a Lie groupoid} $\mb{X}$ is a principal $G$-bundle $\pi\colon E_G \rightarrow X_0$ along with a  smooth 
	map $\mu\colon X_1 \times_{s, X_0, \pi} E_G \rightarrow E_G$ satisfying 
	
	\begin{itemize}
		\item $\mu(1_{\pi(p)}, p)=p$ for all $p \in E_G$ where $1\colon X_0 \rightarrow X_1$ is the identity assigning map,
		\item for each $(\gamma, p) \in X_1 \times_{s,X_0,\pi}E_G $, we have $\bigl(\gamma, \mu(\gamma,p)\bigr) \in X_1 \times_{t, X_0, \pi}E_G$,
		\item if $\gamma_2 , \gamma_1 \in X_1$ such that $t(\gamma_1)=s(\gamma_2)$ and $(\gamma_1,p) \in X_1 \times_{s,X_0,\pi}E_G$ then 
		$\mu(\gamma_2 \circ \gamma_1,p)=\mu(\gamma_2,\mu(\gamma_1,p))$,
		\item for all $p \in E_G, g \in G$ and $\gamma \in X_1$ we have $\mu(\gamma, p)g=\mu(\gamma, pg).$			
	\end{itemize}
\end{definition}
We will denote  the above principal $G$-bundle over the Lie groupoid $\mathbb{X}= [X_1 \rightrightarrows X_0]$ by the notation  $\bigl(\pi\colon E_G \rightarrow X_0, \mu, [X_1 \rightrightarrows X_0]\bigr)$.
The map $\mu$ is called the \textit{action map} of the principal $G$-bundle.

The collection of principal $G$-bundles over $\mb{X}$ forms a groupoid whose morphisms are defined naturally. We will denote this groupoid by ${\rm{Bun}}(\mb{X}, G)$. 

\begin{example}\label{E:Example of principal 2-bundle}
	
	Let  $\bigl(\pi\colon E_G \rightarrow X_0, \mu, [X_1 \rightrightarrows X_0]\bigr)$ be a principal $G$-bundle.  The pull-back Lie groupoid   $[X_1 \times_{s,X_0,\pi} E_G \rra E_G],$ denoted  $[s^{*}E_G \rra E_G],$ gives the $[G\rra G]$-bundle over $[X_1\rra X_0].$ Conversely given a principal $[G\rra G]$-bundle $\pi\colon \mb{E} \rightarrow \mb{X}$  over the Lie groupoid $\mb{X},$ we take		the underlying principal $G$-bundle to be  $\pi_0\colon E_0 \rightarrow X_0$ and the action map $\eta:s^{*}E_0\rightarrow E_0$ to be $(\gamma,p) \mapsto t(\widetilde \gamma),$ where $\widetilde \gamma \in \pi_1^{-1}(\gamma)\subset s^*E_0$ is the unique element associated to $(\gamma,p)\in s^*E_0$ satisfying $s(\widetilde {\gamma})=p$.
\end{example}

\begin{example}\label{E:Example of principal 2-bundle ordinary}
	As in \Cref{Ex:ordinary}	now we consider the  single object Lie $2$-group $[G\rra e]$ of a Lie group $G$.
	Let  $\mb{E}=[E_1\rra E_0]$ be a $[G\rra e]$-bundle over the Lie groupoid ${\mb X}=[X_1\rra X_0].$	Then it is immediate from the definition of a principal $2$-bundle that  $E_0=X_0$ and $E_1$ is a principal $G$-bundle over $X_1$ such that 
\[
	\begin{tikzcd}[sep=small]
		E_1 \arrow[rr,"\pi_1"] \arrow[dd,xshift=0.75ex,"t"]
		\arrow[dd,xshift=-0.75ex,"s"'] &  & X_1 \arrow[dd,xshift=0.75ex,"t"]
		\arrow[dd,xshift=-0.75ex,"s"'] \\
		&  &                \\
		X_0 \arrow[rr,"{\rm Id}"]            &  & X_0           
	\end{tikzcd},\]	
	commutes. Moreover the functoriality of the action of $[G\rra e]$ on $E_1\rra X_0$ implies that the action $E_1\times G\ra E_1$ preserves the hom sets; that is, the restriction gives 
	\begin{equation}\label{E:restricthom}
		{\rm Hom}_{\mb{E}}(x, y)\times G \ra {\rm Hom}_{\mb{E}}(x, y)
	\end{equation}
	for all $x, y\in X_0$  and for any pair of composable morphisms $\gamma_2, \gamma_1 \in E_1,$ and $g, g'\in G$ we have \begin{equation}\label{E:compoequi}
		(\gamma_2 g)\circ (\gamma_1 g')=(\gamma_2\circ \gamma_1) g g'.
	\end{equation}

	On the other hand, suppose $[E\rra X_0]$ is a Lie groupoid, and a compact Lie group $G$  acts on $E$ freely and smoothly. 
	Assume that the source, target maps are $G$ invariant and the action satisfies the conditions in \Cref {E:restricthom,E:compoequi}. Then we have a principal $G$-bundle $E\ra E/G:=X_1,$ which defines a  $[G\rra e]$-bundle $\mb{E}=[E\rra X_0]$   over the Lie groupoid ${\mb X}=[E/G\rra X_0].$

\end{example}
The principal $2$-bundle of \Cref{E:Example of principal 2-bundle ordinary} is related to $G$-extensions as we see below.
\subsection{Lie groupoid $G$-extensions and twisted principal bundles}\label{SS:GExtension}	
We recall the definition of a Lie groupoid $G$-extension \cite{MR3480061}.

Let $G$ be a Lie group and $M$  a smooth manifold. Consider the Lie groupoid $[M\times G\rra M]$ 	with source, target maps both be projection at the first component $(m, g)\mapsto m,$  composition $(m, g_2)\circ (m, g_1)=(m, g_2 g_1),$ $1_m=(m, e)$ and $(m, g)^{-1}=(m, g^{-1}).$	A \textit{Lie groupoid $G$-extension} is a short exact sequence of Lie groupoids  
of the form
\begin{equation}\label{Dia:Gextension}
	\begin{tikzcd}[sep=small]
		1 \arrow[r] & M\times G  \arrow[dd,xshift=0.75ex]\arrow[dd,xshift=-0.75ex]  \arrow[rr, "i"] &  & \Gamma_2 \arrow[rr, "\phi"]  \arrow[dd,xshift=0.75ex]\arrow[dd,xshift=-0.75ex]   &  & \Gamma_1 \arrow[r] \arrow[dd,xshift=0.75ex]\arrow[dd,xshift=-0.75ex]  & 1 \\
		&                                                                    &  &                                                       &  &                           &   \\
		1 \arrow[r] & M \arrow[rr, "{\rm Id}"]                                                     &  & M \arrow[rr, "{\rm Id}"]                                        &  & M \arrow[r]             & 1
	\end{tikzcd},
\end{equation} where $i$ is an embedding and $\phi$ is a surjective submersion (\cite[Chapter $4$]{Moerdijk2}). 

Given a $[G\rra e]$-bundle $\mb{E}=[E_1\rra X_0]$ over the Lie groupoid $\mb{X}=[X_1\rra X_0]$, define the map $i\colon X_0\times G \ra E_1$ by 
$$(x, g)\mapsto  1^{\mb{E}}_{x} \,g,$$	
where $1^{\mb{E}}$	is the identity assigning map in the Lie groupoid $\mb{E}=[E_1\rra X_0].$  Since $1^{\mb E}\colon X_0\ra E_1$ is a diffeomorphism onto its image, by \Cref{E:restricthom}, the map $i$  is an embedding. As $E_1\ra X_1$ is a principal $G$-bundle, it follows that,
\[
\begin{tikzcd}[sep=small]
	1 \arrow[r] & X_0\times G  \arrow[dd,xshift=0.75ex]\arrow[dd,xshift=-0.75ex]  \arrow[rr, "i"] &  & E_1 \arrow[rr, "\pi_1"]  \arrow[dd,xshift=0.75ex]\arrow[dd,xshift=-0.75ex]   &  & X_1 \arrow[r] \arrow[dd,xshift=0.75ex]\arrow[dd,xshift=-0.75ex]  & 1 \\
	&                                                                    &  &                                                       &  &                           &   \\
	1 \arrow[r] & X_0 \arrow[rr, "{\rm Id}"]                                                     &  & X_0 \arrow[rr, "{\rm Id}"]                                        &  & X_0 \arrow[r]             & 1
\end{tikzcd},\]
is a Lie groupoid $G$-extension. 

Conversely, suppose we have a Lie groupoid $G$-extension as in Diagram \ref{Dia:Gextension}.  Observe that commutation of the second square from the left  implies that $$i(x, g)\in {\rm Hom}_{\Gamma_2}(x, x).$$

\begin{lemma}\label{L:Almost}
	
	\begin{enumerate}
		\item There exists a smooth free action $\Gamma_2\times G\ra \Gamma_2$ given by  \begin{equation}\label{E:ActionGamma2}
			(x \xrightarrow{\gamma}y, g)\mapsto \gamma\circ i(x, g).
		\end{equation} 
		
		\item The action is transitive on the fibers $\phi^{-1}(\lambda),$ for each $\lambda\in \Gamma_1$ 
		
		\item  $\phi$ is constant on the orbits of the action. 
		
		\item The action satisfies the condition in \Cref{E:restricthom};
		\begin{equation}\label{E:Restrichomgamma}
			{\rm Hom}_{\Gamma_2}(x, y)\times G \ra {\rm Hom}_{\Gamma_2}(x, y)
		\end{equation}
		for all $x, y\in M.$  
	\end{enumerate}
\begin{proof}
	\begin{enumerate}
		\item Functoriality of $i$ implies that
		\[	\gamma (gg')=\gamma \circ i(x, g\, g')
		=\gamma \circ \bigl(i(x, g)\circ i(x, g')\bigr)
		=\bigl(\gamma \circ i((x, g)\bigr)\circ i(x, g')
		=(\gamma g) g'.\]
		Injectivity of $i$ ensures the action is free. 
		\item If $\phi(\gamma_2)=\phi(\gamma_1).$ Then $\phi(\gamma_2^{-1}\circ \gamma_1)=1^{\Gamma_1}_x.$ Thus by  exactness of Diagram \ref{Dia:Gextension} we have $(x, g)\in M \times G$ such that $\gamma_2^{-1}\circ \gamma_1=i(x, g).$ Hence $\gamma_1=\gamma_2 g.$  
		
		\item $\phi(\gamma g)=\phi(\gamma\circ i(x, g))=\phi(\gamma)\circ \phi (i(x, g)).$ Since \ref{Dia:Gextension} is a short exact sequence 
		$\phi (i(x, g))=1^{\Gamma_1}_x.$ Therefore $\phi(\gamma g)=\phi(\gamma).$
		\item Readily follows from the observation that $i(x, g)\in {\rm Hom}_{\Gamma_2}(x, x).$
	\end{enumerate}
\end{proof}\end{lemma}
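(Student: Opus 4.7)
The overall strategy is that all four parts of the lemma should follow formally from two inputs already built into the data of a Lie groupoid $G$-extension: first, the functoriality and smoothness of the embedding $i\colon M\times G\hookrightarrow \Gamma_2$; second, exactness of the sequence at the morphism level, which says that the image of $i$ coincides with the kernel of $\phi$. The observation already recorded before the lemma, namely that $i(x,g)\in{\rm Hom}_{\Gamma_2}(x,x)$, will be used again and again to ensure everything composes.

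For part~(1), the first task is well-definedness. Since $i(x,g)$ is a morphism $x\to x$ and $\gamma$ has source $x$, the composite $\gamma\circ i(x,g)$ exists in $\Gamma_2$ and has source $x$, target $y$; smoothness of the assignment is immediate from smoothness of $i$ and of groupoid composition. The action axioms reduce to functoriality of $i$: in the pair groupoid $[M\times G\rra M]$ one has $(x,e)=1^{\mathrm{pair}}_x$ and $(x,g g')=(x,g)\circ(x,g')$, which upon applying the functor $i$ give $i(x,e)=1^{\Gamma_2}_x$ and $i(x,gg')=i(x,g)\circ i(x,g')$. Hence $\gamma\cdot e=\gamma$ and $(\gamma g)g'=\gamma(gg')$ by associativity. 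Freeness follows because if $\gamma\circ i(x,g)=\gamma$, left cancellation in $\Gamma_2$ forces $i(x,g)=1^{\Gamma_2}_x=i(x,e)$, and injectivity of $i$ (an embedding) gives $g=e$.

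Parts~(2)--(4) are then short consequences of exactness and functoriality. For part~(2), if $\phi(\gamma_1)=\phi(\gamma_2)=\lambda$ with common source $x$, the morphism $\gamma_2^{-1}\circ\gamma_1\colon x\to x$ satisfies $\phi(\gamma_2^{-1}\circ\gamma_1)=1^{\Gamma_1}_x$, so by exactness it lies in the image of $i$, necessarily of the form $i(x,g)$; hence $\gamma_1=\gamma_2\circ i(x,g)=\gamma_2 g$. For part~(3), the functor $\phi$ gives $\phi(\gamma g)=\phi(\gamma)\circ\phi(i(x,g))$, and $\phi\circ i$ is trivial at the morphism level (by exactness, it factors through the identities), so $\phi(i(x,g))=1^{\Gamma_1}_x$ and $\phi(\gamma g)=\phi(\gamma)$. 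Part~(4) is immediate from the source/target computation already used in part~(1): $\gamma\circ i(x,g)$ is again a morphism $x\to y$, so composition with elements in the image of $i$ preserves each hom-set. The main obstacle, such as it is, is purely bookkeeping: for each statement one must invoke exactly the right piece of the hypothesis (exactness of the sequence for parts~(2) and~(3), the embedding property of $i$ for freeness in part~(1), functoriality of $i$ for the action axioms), and keep the sources and targets straight using the identification $i(x,g)\in{\rm Hom}_{\Gamma_2}(x,x)$.
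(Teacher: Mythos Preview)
Your proof is correct and follows essentially the same route as the paper's: functoriality of $i$ for the action axioms and freeness via injectivity in part~(1), exactness of the sequence for parts~(2) and~(3), and the observation $i(x,g)\in{\rm Hom}_{\Gamma_2}(x,x)$ for part~(4). One small terminological slip: $[M\times G\rra M]$ is not the pair groupoid (that would be $[M\times M\rra M]$) but rather a bundle of groups; the argument you give is unaffected.
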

In conclusion,  since $\phi$ is a surjective submersion, the Lie groupoid $G$-extension in Diagram \ref{Dia:Gextension} produces a  principal $G$-bundle $\phi\colon \Gamma_2\ra \Gamma_1$ such that the diagram
\[
\begin{tikzcd}[sep=small]
	\Gamma_2 \arrow[rr,"\phi"] \arrow[dd,xshift=0.75ex,"t"]
	\arrow[dd,xshift=-0.75ex,"s"'] &  & \Gamma_1 \arrow[dd,xshift=0.75ex,"t"]
	\arrow[dd,xshift=-0.75ex,"s"'] \\
	&  &                \\
	M \arrow[rr,"{\rm Id}"]            &  & M          
\end{tikzcd},\]	
commutes, and the action satisfies \Cref{E:Restrichomgamma}. The  action defined in \Cref{L:Almost} in general \textit{does not} satisfy the other functoriality condition in \Cref{E:compoequi}! Thus  the principal $G$-bundle $\phi \colon \Gamma_2\ra \Gamma_1$ \textit{does not} define a  $[G\rra e]$-bundle $ [\Gamma_2\rra M]$   over  the Lie groupoid $[\Gamma_1\rra M].$
However  we show that  it is actually a {twisted principal $[G\rra e]$-bundle over the Lie groupoid} $[\Gamma_1\rightrightarrows M]$. For that we define 
\begin{equation}\label{E:TwistGextension}
	\begin{split}
		\eta\colon &\Mor(\Gamma_2)\times G \ra \Mor(\Gamma_2)\\
		&(x\xrightarrow{\gamma} y, g)\mapsto i(y, g)\circ \gamma\circ i(x, g^{-1}).
	\end{split}
\end{equation}	
It is readily verified that $\eta$ satisfies all the required conditions of \Cref{E:Contwist}. Thus we have the twisted product category $\bigl([\Gamma_2\rra M]\rtimes_{\eta} [G\rra e]\bigr).$ Then we verify,
\begin{equation}\label{E:Veriftwisfunc}
	\begin{split}
		\bigl(y\xrightarrow{\gamma_2}z\,  {g'}\bigr)\circ \bigl(x\xrightarrow{\gamma_1}y\,g\bigr)
		= &\bigl({\gamma_2}\circ  i(y, g')\bigr)\circ \bigl({\gamma_1}\circ i(x, g)\bigr)\\
		= &\bigl({\gamma_2}\circ  (i(y, g')\circ {\gamma_1}\circ i(x, g'^{-1})\bigr)\circ i(x, g' g)\\
		= & \bigl({\gamma_2}\circ  \eta (\gamma_1, g')\bigr)\circ i(x, g' g)
		= \bigl({\gamma_2}\circ  \eta (\gamma_1, g')\bigr)g' g, 
	\end{split}
\end{equation} 
which is precisely the condition, stated in \Cref{Remark:functorialitytwisted}, required for the action in \Cref{E:ActionGamma2} to be an $\eta$-twisted action
$$\bigl([\Gamma_2\rra M]\rtimes_{\eta} [G\rra e]\bigr)\ra [\Gamma_2\rra M].$$
Observe that in the other direction the construction proceeds for a twisted $[G\rra e]$-bundle exactly same way as was  for a  \textit{non-twisted} $[G\rra e]$-bundle (using the  third property in  \Cref{E:Contwist}).

\begin{proposition}
	Let $G$ be a Lie group. A twisted principal $[G\rra e]$-bundle defines a Lie groupoid $G$-extension and vice-versa. 
\end{proposition}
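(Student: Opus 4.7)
The plan is to package the two constructions already developed in the paragraphs immediately preceding this proposition and then verify that they are mutually inverse. Almost all the technical content is in place; what remains is bookkeeping.

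For the forward direction, take a twisted principal $[G\rra e]$-bundle $\mb{E}=[E_1 \rra X_0]$ over $\mb{X}=[X_1 \rra X_0]$ and define $i\colon X_0 \times G \to E_1$ by $(x,g)\mapsto 1_x^{\mb{E}}\cdot g$. The map $i$ is an embedding because $1^{\mb{E}}\colon X_0 \to E_1$ is a diffeomorphism onto its image and the $G$-action is free. It is a functor because the twist satisfies $\eta(1_x,k) = 1_x$ by \Cref{E:Contwist}, so that
\begin{equation*}
i(x,g_2)\circ i(x,g_1) = (1_x^{\mb{E}}\circ \eta(1_x^{\mb{E}},g_2))\,(g_2 g_1) = 1_x^{\mb{E}}\cdot (g_2 g_1) = i(x,g_2 g_1).
\end{equation*}
Exactness of
\begin{equation*}
1 \to X_0 \times G \xrightarrow{i} E_1 \xrightarrow{\pi_1} X_1 \to 1
\end{equation*}
then follows because $\pi_1\colon E_1 \to X_1$ is a principal $G$-bundle, hence a surjective submersion, and the image of $i$ is precisely the $\pi_1$-preimage of $1^{\mb{X}}(X_0)\subseteq X_1$ since the $G$-orbits on $E_1$ are exactly the $\pi_1$-fibres. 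This produces the desired Lie groupoid $G$-extension as in Diagram~\ref{Dia:Gextension}.

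For the reverse direction, \Cref{L:Almost} together with the verification \Cref{E:Veriftwisfunc} and the definition of the twist in \Cref{E:TwistGextension} already exhibit $\phi\colon \Gamma_2 \to \Gamma_1$ as an $\eta$-twisted principal $[G\rra e]$-bundle over $[\Gamma_1 \rra M]$, so no new construction is needed. To see the two assignments are mutually inverse I would perform two short computations. Starting from an extension, the embedding recovered from the associated twisted bundle is $(x,g)\mapsto 1^{\Gamma_2}_x\cdot g = 1^{\Gamma_2}_x\circ i(x,g) = i(x,g)$, i.e.\ the original embedding. Starting from a twisted bundle and applying the reverse construction after the forward one, the action is recovered via $\gamma\circ i(s(\gamma),g) = \gamma\circ (1^{\mb{E}}_{s(\gamma)}\cdot g) = \gamma\cdot g$, using $\eta(1_x,g)=1_x$ and the twisted composition rule of \Cref{Remark:functorialitytwisted}; the twist is recovered by expanding \Cref{E:TwistGextension} with $i(x,g) = 1^{\mb{E}}_x\cdot g$ and applying the same rule twice.

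The only genuine obstacle I anticipate is verifying smoothness of $\eta$ produced from a $G$-extension; I would handle this using that $i$ is a smooth embedding and that source, target, composition and inversion of a Lie groupoid are smooth maps, so that $\eta(\gamma, g) = i(t(\gamma),g)\circ \gamma \circ i(s(\gamma),g^{-1})$ is a smooth composite. Beyond that, the proof is routine unpacking of definitions.
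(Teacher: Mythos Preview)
Your proposal is correct and follows essentially the same approach as the paper. In the paper the proposition is stated without a separate proof block: both constructions are carried out in the paragraphs preceding it, and the only additional remark is that the passage from a twisted $[G\rra e]$-bundle to a $G$-extension ``proceeds exactly the same way as for a non-twisted $[G\rra e]$-bundle (using the third property in \Cref{E:Contwist})'', i.e.\ $\eta(1_x,k)=1_x$, which is precisely the ingredient you isolate when verifying that $i$ is a functor. Your additional checks (mutual inverse of the two constructions, smoothness of $\eta$) go beyond what the paper spells out but are straightforward and consistent with its argument; one minor point is that in your recovery of the action you actually invoke $\eta(1_x,e)=1_x$ rather than $\eta(1_x,g)=1_x$, though both hold and the computation is unaffected.
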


It is pertinent to note here that \cite{MR3480061} also discussed the relation between Morita equivalent classes of Lie groupoid $G$-extensions and Morita equivalence classes of what the authors called $[G\rra {\rm Aut} (G)]$-bundles (see \cite[Theorem $3.4$]{MR3480061}).  

\subsection{Decorated principal $2$-bundles}\label{SS:Decorated}
In this subsection we are going to construct the main example for this paper.
Given a Lie $2$-group $\mb{G}=[G_1\rra G_0]$ and a principal $G_0$-bundle over a Lie groupoid $\mb{X}$, we will construct a principal $\mb{G}$-bundle over $\mb{X}$. We will call them decorated principal $2$-bundles. Let $\ghta$ be the associated crossed module of $\mb{G}.$ Here, of course, $G_0=G.$

An analogous notion for a principal $2$-bundle over a path space groupoid already exists in literature,  introduced in \cite{MR3126940} by decorating the space of ${\bar A}$-horizontal paths ${\mathcal P}_{\bar A}P,$ for a connection   $\bar A$ on a principal $G_0$-bundle $P\to M$.  

\begin{proposition}\label{Prop:Decoliegpd}
	Let $(G, H, \tau\colon H \rightarrow G, \alpha:G \times H \rightarrow H)$ be a Lie crossed module and $\mb{G}=[H\rtimes G\rra G]$ the associated Lie $2$-group. Let  $\bigl(\pi\colon E_G \rightarrow X_0, \mu, [X_1 \rightrightarrows X_0]\bigr)$  a principal $G$-bundle over the Lie groupoid $\mb{X}=[X_1 \rightrightarrows X_0]$. Let us denote $s^{*}E_G \times H= (X_1 \times_{s, X_0, \pi} E_G) \times H$ by $(s^{*}E_G)^{\rm{dec}}$. 
	\begin{enumerate}
		\item	The manifolds $(s^{*}E_G)^{\rm{dec}}$ and $E_G$ determines a Lie groupoid $[(s^{*}E_G)^{\rm{dec}} \rightrightarrows E_G]$ whose structure maps are given as 
		\begin{itemize}
			\item source map $\tilde{s}$: $(\gamma, p, h) \mapsto p$,
			\item target map $\tilde{t}$: $(\gamma, p, h) \mapsto \mu(\gamma, p) \tau(h^{-1})$,
			\item composition map $\tilde{m} \colon \big((\gamma_2, p_2, h_2), (\gamma_1, p_1, h_1) \big) \mapsto (\gamma_2 \circ \gamma_1, p_1 ,h_2h_1)$, 
			\item unit map $\tilde{u} : p \mapsto (1_{\pi(p)},p,e_H)$,
			\item inverse map $ \tilde{i}\colon \bigl(\gamma, p, h) \mapsto (\gamma^{-1}, \mu(\gamma,p)\tau(h^{-1}), h^{-1}\bigr)$.
		\end{itemize}
		\item The Lie groupoid  $\mb{E}^{\rm{dec}}:=[(s^{*}E_G)^{\rm{dec}} \rightrightarrows E_G]$ forms a principal $\mb{G}$-bundle over $[X_1\rra X_0]$, whose action functor and the bundle functor respectively given by 
		\begin{equation}\label{E:Actionondeco}
			\begin{split}
				\rho\colon &\mb{E}^{\rm dec}\times \mb{G}\ra \mb{E}^{\rm dec}\\
				&(p, g) \mapsto p\, g\\
				\bigl((\gamma, p, h)&, (h', g)\bigr)\mapsto \bigl(\gamma, p g, \alpha_{g^{-1}}(h'^{-1}\, h)\bigr),
			\end{split}
		\end{equation}
		and
		\begin{equation}\label{E:Projondeco}
			\begin{split}
				\pi\colon &\mb{E}^{\rm dec}\ra \mb{X}\\
				& p \mapsto \pi(p)\\
				\bigl(\gamma,&  p, h\bigr) \mapsto \gamma.
			\end{split}
		\end{equation}
	\end{enumerate}
	We call $\mb{E}^{\rm{dec}}\ra [X_1\rra X_0]$ the \textit{decorated $\mb{G}$-bundle associated to $(E_G\ra X_0,[X_1\rra X_0])$}.
	\begin{proof}
		(1)		It is straightforward to verify that  $[(s^{*}E_G)^{\rm{dec}} \rightrightarrows E_G]$   is a groupoid.
		
		Since $E_G\ra X_0$ is a surjective submersion,  $(s^{*}E_G)^{\rm{dec}}$ is a smooth manifold. Moreover, as ${\tilde s}$ is the composition of surjective submersions, it (hence the target map as well) is a surjective submersion. The smoothness of other structure maps directly follows from the smoothness of $\mu$ and smoothness of structure maps of the Lie groupoid $[X_1\rra X_0].$ 
		
		(2) Since the action of $G$ on $E_G$ is free and 
		$\alpha_g\in {\rm Aut}(H)$ for each $g\in G,$ the action of $H\rtimes_{\alpha}G$ on $(s^{*}E_G)^{\rm{dec}}$ in \Cref{E:Actionondeco} is free. 
		Observe that for any $\gamma \in X_1,$ $\pi^{-1}({\gamma})=\pi^{-1}(s(\gamma))\times H.$ Now it is straightforward to verify that the action is transitive on $\pi^{-1}({\gamma})$, and $\pi\bigl((\gamma, p, h) (h', g)\bigr)=\pi(\gamma, p, h).$	
		
		The local trivialization on the principal $G$-bundle $\pi\colon E_G\ra X_0$ induces a local trivialization on  the map $\pi\colon (s^{*}E_G)^{\rm{dec}}\ra \mb{X}_1$	as follows. For any $\gamma\in X_1,$ there exists a local trivialization $(U\subset X_0, \phi)$ such that $s(\gamma)\in U$ and $\phi\colon \pi^{-1}(U)\ra U\times G$ a diffeomorphism. Then we have a local trivialization $(s^{-1}(U), \tilde \phi)$ around $\gamma,$ given by
		\begin{equation}\nonumber
			\begin{split}
				\tilde \phi\colon &\pi^{-1}\bigl(s^{-1}(U)\bigr)\ra s^{-1}(U)\times (H\rtimes_{\alpha} G)\\
				&(\gamma, p, h)\mapsto (\gamma, \alpha_g(h), g),
			\end{split}
		\end{equation}
		where $g={\rm pr}_2\circ \phi(p)\in G.$ This completes the proof.		
	\end{proof}
\end{proposition}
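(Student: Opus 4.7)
The plan is to check that (1) $[(s^{*}E_G)^{\mathrm{dec}} \rightrightarrows E_G]$ with the prescribed structure maps forms a Lie groupoid, and then (2) that the projection $\pi$ and the action $\rho$ are smooth functors satisfying the principal bundle requirements. Smoothness of all the structure maps will follow routinely from smoothness of $\mu,$ the structure maps of $\mb{X}$ and the group operations of $H\rtimes_\alpha G,$ while $\tilde{s}$ is a composition of the surjective submersions $(s^{*}E_G)^{\mathrm{dec}}\to s^{*}E_G\to E_G,$ and the surjective submersion property of $\tilde{t}$ then follows by composing with $\tilde{i}.$

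For part (1), the only nontrivial compatibility of the structure maps is that $\tilde{t}$ respects composition: with $p_2 = \mu(\gamma_1,p_1)\tau(h_1^{-1}),$ I must check
\[ \mu(\gamma_2\circ \gamma_1,p_1)\,\tau((h_2h_1)^{-1}) = \mu(\gamma_2,p_2)\,\tau(h_2^{-1}),\]
which reduces via the compatibility axioms of \Cref{definition:principal bundle over Lie groupoid} to the $G$-equivariance of $\mu.$ Associativity, the identity, and inverse laws are immediate, because the middle base point $p_1$ and the ordered product $h_3h_2h_1$ are insensitive to how one associates the composition $\tilde{m}.$

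For part (2), functoriality of $\pi$ is immediate. The main obstacle is showing that $\rho$ is a functor, and this is where both Peiffer identities of \eqref{E:Peiffer} play their essential roles. Source compatibility is trivial; target compatibility reduces to
\[ \mu(\gamma,pg)\,\tau\bigl(\alpha_{g^{-1}}(h^{-1}h')\bigr) = \mu(\gamma,p)\,\tau(h^{-1})\,\tau(h')\,g,\]
which follows from the first Peiffer identity $\tau(\alpha_g(h))=g\tau(h)g^{-1}$ together with $G$-equivariance of $\mu.$ Preservation of composition is the deepest check: for composable morphisms $(h'_2,g_2),(h'_1,g_1)$ in $\mb{G}$ one has $g_2 = \tau(h'_1)g_1,$ so $\alpha_{g_2^{-1}} = \alpha_{g_1^{-1}}\circ \alpha_{\tau(h'^{-1}_1)}.$ The second Peiffer identity $\alpha_{\tau(h)}(h')=hh'h^{-1}$ then collapses
\[ \alpha_{g_2^{-1}}(h'^{-1}_2h_2)\cdot \alpha_{g_1^{-1}}(h'^{-1}_1h_1) = \alpha_{g_1^{-1}}(h'^{-1}_1 h'^{-1}_2 h_2 h_1),\]
which is precisely what functoriality demands.

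Finally, I would verify the principal $\mb{G}$-bundle conditions. Freeness of the $H\rtimes_\alpha G$-action on $(s^{*}E_G)^{\mathrm{dec}}$ follows immediately: the $G$-component must be trivial because $G$ acts freely on $E_G,$ after which the $H$-component is forced. The fiber $\pi_1^{-1}(\gamma)$ is canonically $\pi_0^{-1}(s(\gamma))\times H,$ on which the action is transitive by a direct calculation. Local trivializations over $s^{-1}(U)\subset X_1$ are induced from any trivialization $\phi$ of the underlying principal $G$-bundle $E_G$ over $U\subset X_0,$ as spelled out in the statement, completing the verification that $\mb{E}^{\mathrm{dec}}\to \mb{X}$ is a principal $\mb{G}$-bundle.
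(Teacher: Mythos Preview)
Your proof is correct and follows the same overall approach as the paper's proof: verify the groupoid axioms, note the smoothness and surjective-submersion properties, and then check the principal-bundle conditions (freeness, fiberwise transitivity, local triviality induced from that of $E_G\to X_0$). In fact your argument is more explicit than the paper's, which dismisses the groupoid axioms and the functoriality of $\rho$ as ``straightforward''; your identification of exactly where each Peiffer identity is used (the first for target-compatibility of $\rho$, the second for compatibility with composition) is precisely the content the paper suppresses.
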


	\begin{example}
	The product  Lie $2$-group $\mb{G}=[G_1\rra G_0]$-bundle $\mb{X}\times \mb{G}$ over $\mb{X}=[X_1\rra X_0]$
	in \Cref{E:Example of product bundle} can be viewed as a decorated bundle. Note that $s^{*}(X_0\times G_0)=X_1\times G_0.$ The action of $X_1$ on $X_0\times G_0$ is given by $s^* (X_0\times G_0)\ra X_0\times G_0$ by $(\gamma, g)\mapsto (t(\gamma), g).$ 
	Thus the Lie groupoid $[X_1\times G_0\rra X_0\times G_0]$ is the pullback Lie groupoid, and by decoration we recover the $\mb{X}\times \mb{G}.$
	
\end{example}

\begin{example}\label{Ex:EoXodecobundle}
	Let $\ghta$ be a Lie crossed module associated to a Lie $2$-group $\mb{G}$. Consider a traditional principal $G$-bundle $E\ra X.$ The Lie groupoid $[X\rra X]$ trivially acts on $E$ by $(x, p)\mapsto p.$ Then the  Lie  groupoid ${E}^{\rm dec}:=[E\times H\rra E]$ defines the associated decorated $\mb{G}$-bundle. 
\end{example}

\subsection{Categorical connections}\label{SS:Catconnection}
We will, later on, discuss the differential geometric notion of a connection on a 
principal $2$-bundle over a Lie groupoid. Apart from that, a principal $2$-bundle can also be equipped with a lifting property for arrows, which we call a `categorical connection'.
\begin{definition}\label{Def:categorical connection}
	Let $\mb{G}$ be a Lie 2-group and $\pi\colon \mb{E} \ra \mb{X}$ a $\mb{G}$-bundle over $\mb{X}$. A \textit{categorical connection} $\mathcal{C}$ on $\mb{E}(\mb{X}, \mb{G})$ is  an embedding $\mathcal{C}\colon {s}^{*}E_0 \ra E_1$, and   satisfy following conditions:
	\begin{itemize}
		\item $s(\mathcal{C}(\gamma,p))=p$  for all $(\gamma,p) \in s^{*}E_0$,
		\item $\pi_1(\mathcal{C}(\gamma,p))= \gamma$ for all $(\gamma,p) \in s^{*}E_0$,
		\item if $(\gamma_2, p_2), (\gamma_1, p_1) \in {s}^{*}E_0$ such that ${s}(\gamma_2)={t}(\gamma_1)$ and $p_2=t\bigl({\mathcal C}(\gamma_1, p_1)\bigr),$ then $\mathcal{C}(\gamma_2 \circ \gamma_1 , p_1)= \mathcal{C}(\gamma_2, p_2) \circ \mathcal{C}(\gamma_1, p_1)$,
		\item $\mathcal{C}(1_x,p)=1_p$  for any $x\in X_0$ and $p\in \pi^{-1}(x)$,
		\item $\mathcal{C}(\gamma, p. g)= \mathcal{C}(\gamma, p) \cdot 1_g$ for all $(\gamma, p) \in {s}^{*}E_0$ and $g \in G_0$.
	\end{itemize}
	We call $\mathcal{C}(\gamma, p)$ the (unique) $\mathcal{C}$\textit{-horizontal lift of $\gamma \in X_1$ through the point} $p \in \pi_0^{-1}\bigl(s(\gamma)\bigr).$
\end{definition}

The definition is motivated by the notion of horizontal lifting of a path by a connection in the traditional set-up of principal bundles. See \cite{MR3126940} for an analogous notion of a categorical connection on
the set-up of path space groupoid. 
We will observe that categorical connections characterize the decorated bundles.

\begin{example}
	There exists a unique categorical connection on a  $[G\rra G]$-bundle over a Lie groupoid. 
\end{example}	
\begin{example}\label{Ex:CatconnDeco}
	Let  $\bigl[(s^{*}E_G)^{\rm{dec}} \rightrightarrows E_G\bigr]\ra [X_1\rra X_0]$ be  the 
	decorated  $\mb{G}$-bundle obtained in \Cref{Prop:Decoliegpd}. Any smooth map $\beta\colon E_G\ra H$ satisfying $\beta(p g)=\alpha_{g^{-1}}(\beta(p))$ for $p\in E_G, g\in G$, defines  a categorical connection
	${\mathcal C}\colon (\gamma, p)\mapsto \bigl(\gamma, p, 
	\beta(p)\beta(\mu(\gamma,p))^{-1} \bigr)$. 
	For the constant map $\beta\colon p\mapsto e,$ we have the categorical connection $(\gamma, p)\mapsto (\gamma, p, e).$ We call the categorical connection $(\gamma, p)\mapsto (\gamma, p, e),$ the \textit{canonical categorical connection} on the decorated bundle. 
\end{example}	

Given a categorical connection  $\mathcal C$ on a $\mb{G}$-bundle  $\pi\colon \mb{E} \ra \mb{X}$  over $\mb{X}=[X_1\rra X_0],$ we can define an action of $X_1$ on $E_0$ 
\begin{equation}\label{actionwithcatcon}
	\begin{split}
		\mu\colon s^{*}E_0&\ra E_0,\\
		(\gamma, p)&\mapsto t\bigl({\mathcal C}(\gamma, p)\bigr),
	\end{split}
\end{equation}
which turns $s^*E_0$ into a Lie groupoid  $[s^{*}E_0\rra E_0]$, and ${\mathcal C}$ allows us to identify $[s^*E_0\rra E_0]$ as a Lie sub groupoid of $[E_1\rra E_0]$, 	\begin{equation}\label{Dia:Liegpdembedding}
	\begin{tikzcd}[sep=small]
		s^*E_0 \arrow[rr,"\mathcal C"] \arrow[dd,xshift=0.75ex]
		\arrow[dd,xshift=-0.75ex] &  & E_1 \arrow[dd,xshift=0.75ex]
		\arrow[dd,xshift=-0.75ex] \\
		&  &                \\
		E_0 \arrow[rr,"{\rm Id}"]            &  & E_0          
	\end{tikzcd}.
\end{equation}
Note that  $\mu$ satisfies $\mu (\gamma, p) g=t({\mathcal C}(\gamma, p)) g=t({\mathcal C}(\gamma, p)1_g)=t({\mathcal C}(\gamma, p g))=\mu(\gamma, pg).$ Identifying $G$ as  a  subgroup of $G_1=H\rtimes G,$ we observe that the fifth condition in \Cref{Def:categorical connection} implies  $[s^*E_0\rra E_0]\simeq [{\mathcal C}(s^*E_0)\rra E_0]$ is in fact a sub Lie groupoid bundle over $\mb{X}=[X_1\rra X_0]$ with a reduced structure Lie $2$-group $[G\rra G].$ Given any ${\widetilde \gamma}\in E_1$ it follows that we have a unique $h\in H$ such that $[{\mathcal C}\bigl(\pi(\widetilde \gamma), s(\widetilde \gamma)\bigr)](h, e)=\widetilde \gamma,$ since $\pi(\widetilde \gamma)=\pi \bigl({\mathcal C}\bigl(\pi(\widetilde \gamma), s(\widetilde \gamma)\bigr)\bigr)$ and $s(\widetilde \gamma)=s\bigl({\mathcal C}\bigl(\pi(\widetilde \gamma), s(\widetilde \gamma)\bigr).$
In turn, we get an isomorphism of $\mb{G}$-bundles
\begin{equation}
	\begin{split}
		&\theta\colon s^*E_0\times H \ra E_1\\
		&((\gamma, p), h\bigr)\ra {\mathcal C}(\gamma, p) (h^{-1}, e).
	\end{split}
\end{equation}
\begin{lemma}\label{Lem:isodecgeneral} 
	The map $(\theta,\rm{Id})$ is an isomorphism from $\mb{E}^{\rm{dec}}(\mb{X},\mb{G})$ to $\mb{E}(\mb{X,\mb{G}})$.
	\begin{proof}
		The functoriality of $\theta$ follows from the composition law for the decorated Lie groupoid in \Cref{Prop:Decoliegpd} and the functoriality of the action of $\mb{G}$,  
		\begin{equation}
			\begin{split}
				&\theta\bigl((\gamma_2, p_2), h_2)\bigr)\circ \theta\bigl((\gamma_1, p_1), h_1\bigr)\\
				&=\bigl({\mathcal C}(\gamma_2, p_2) (h_2^{-1}, e)\bigr)\circ\bigl({\mathcal C}(\gamma_1, p_1) (h_1^{-1}, e)\bigr)\\
				&=\bigl({\mathcal C}(\gamma_2, p_2) (h_2^{-1}, e)\bigr)\circ\bigl({\mathcal C}(\gamma_1, p_1)(e, \tau(h_1^{-1})) (e, \tau(h_1)) (h_1^{-1}, e)\bigr)\\
				&=\bigl({\mathcal C}(\gamma_2, p_2) (h_2^{-1}, e)\bigr)\circ\bigl({\mathcal C}(\gamma_1, p_1\tau(h_1)^{-1}) (e, \tau(h_1)) (h_1^{-1}, e)\bigr)\\
				&=\bigl({\mathcal C}(\gamma_2, p_2) (h_2^{-1}, e)\bigr)\circ\bigl({\mathcal C}(\gamma_1, p_1\tau(h_1)^{-1}) (e, \tau(h_1)) (h_1^{-1}, e)\bigr)\\
				&=\bigl({\mathcal C}(\gamma_2, p_2) \circ {\mathcal C}(\gamma_1, p_1\tau(h_1)^{-1})    \bigr)\bigl((h_2^{-1}, e) \circ \bigl((e, \tau(h_1)) (h_1^{-1}, e)\bigr)\bigr)\\
				&={\mathcal C}\bigl(\gamma_2\circ \gamma_1, p_1\tau(h_1)^{-1}   \bigr)\bigl((h_2^{-1}, e) \circ (h_1^{-1}, \tau(h_1))\bigr)\\
				&={\mathcal C}\bigl(\gamma_2\circ \gamma_1, p_1\tau(h_1)^{-1}\bigr) \bigl(h_2^{-1} h_1^{-1}, \tau(h_1)\bigr)\\
				&={\mathcal C}\bigl(\gamma_2\circ \gamma_1, p_1\bigr) \bigl(e, \tau(h_1^{-1})\bigr)   \bigl(h_2^{-1} h_1^{-1}, \tau(h_1)\bigr)\\
				&={\mathcal C}\bigl(\gamma_2\circ \gamma_1, p_1\bigr) \bigl(h_1^{-1} h_2^{-1}, e \bigr) \\
				&=\theta\bigl((\gamma_2\circ \gamma_1, p_1), h_2 h_1\bigr).
			\end{split}
		\end{equation}
		To see the equivarinacy, note that by the action defined in \Cref{Prop:Decoliegpd},
		$(\gamma, p, h)(h', g')=(\gamma, p g', \alpha_{g'^{-1}}(h'^{-1}h)).$ Thus 
		\begin{equation}
			\begin{split}
				&\theta\bigl((\gamma, p, h)(h', g')\bigr)\\
				&={\mathcal C}(\gamma, p g')\bigl(\alpha_{g'^{-1}}(h^{-1}h'), e\bigr)\\
				&={\mathcal C}(\gamma, p)(e, g')\bigl(\alpha_{g'^{-1}}(h^{-1}h'), e\bigr)\\
				&={\mathcal C}(\gamma, p)(h^{-1}, e)(h, e)(e, g')\bigl(\alpha_{g'^{-1}}(h^{-1}h'), e\bigr)\\
				&=\theta \bigl((\gamma, p), h\bigr)(h', g').
			\end{split}
		\end{equation}
		Obviously  $(\theta, \rm Id)$ is a morphism of principal $2$-bundles.
	\end{proof}
\end{lemma}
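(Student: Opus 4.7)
The map $\theta((\gamma,p),h) = \mathcal{C}(\gamma,p)(h^{-1},e)$ is manifestly smooth (composition of smooth structure maps), covers the identity on $E_0$ on objects, and lands in $\pi_1^{-1}(\gamma)$ with source $p$, so it is a map of bundles over $\mb{X}$. The four things I must check are: (a) $\theta$ is a functor $\mb{E}^{\rm dec} \to \mb{E}$, (b) $\theta$ is $\mb{G}$-equivariant with respect to the action in \Cref{E:Actionondeco}, (c) $\theta$ is a diffeomorphism on morphisms (it is already the identity on objects), and (d) the inverse is smooth.

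For (a), on sources and targets I would just unwind the formulas, using $s(\mathcal{C}(\gamma,p)) = p$, $t(\mathcal{C}(\gamma,p)) = \mu(\gamma,p)$, together with the target formula in \Cref{Prop:Decoliegpd} and the fact that right multiplication by $(h^{-1},e)$ fixes the source and moves the target by $\tau(h^{-1})$. The nontrivial step is composition: I would compute $\theta((\gamma_2,p_2),h_2) \circ \theta((\gamma_1,p_1),h_1)$ and reduce it to $\theta((\gamma_2\circ\gamma_1,p_1), h_2 h_1)$. The decisive move is to insert $(e,\tau(h_1^{-1}))(e,\tau(h_1))=1$ between $\mathcal{C}(\gamma_1,p_1)$ and $(h_1^{-1},e)$ so that the fifth axiom of \Cref{Def:categorical connection} lets me rewrite $\mathcal{C}(\gamma_1,p_1)(e,\tau(h_1^{-1}))$ as $\mathcal{C}(\gamma_1, p_1\tau(h_1)^{-1})$. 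After this change one can pull the $\mathcal{C}$'s together using the interchange law in $\mb{G}$ and the composition axiom for $\mathcal{C}$, leaving a computation purely inside the semidirect product $H\rtimes G$ which collapses via the first Peiffer identity to $(h_1^{-1}h_2^{-1},e)$. Compatibility with identities is automatic from the fourth axiom of \Cref{Def:categorical connection}.

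For (b), the action on the domain is $(\gamma,p,h)\cdot (h',g') = (\gamma, p g', \alpha_{g'^{-1}}(h'^{-1}h))$, so I must show $\mathcal{C}(\gamma, pg')\bigl(\alpha_{g'^{-1}}(h^{-1}h'),e\bigr) = \mathcal{C}(\gamma,p)(h^{-1},e)\cdot (h',g')$. Again the fifth axiom of \Cref{Def:categorical connection} converts $\mathcal{C}(\gamma, pg')$ into $\mathcal{C}(\gamma,p)(e,g')$, and then the semidirect product identity $(e,g')(h'',e) = (\alpha_{g'}(h''),e)(e,g')$ rearranges everything into the desired form.

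For (c) and (d), the principal-bundle structure of $\pi_1\colon E_1\to X_1$ is the essential input. Given $\tilde\gamma\in E_1$, set $(\gamma,p) := (\pi_1(\tilde\gamma), s(\tilde\gamma)) \in s^*E_0$; then $\tilde\gamma$ and $\mathcal{C}(\gamma,p)$ are two points of the same $G_1$-fiber with the same source, so freeness and the form $G_1 = H\rtimes G$ force a unique $(h^{-1},e)$ with $\tilde\gamma = \mathcal{C}(\gamma,p)(h^{-1},e)$ (any $G$-component would move the source). This defines the set-theoretic inverse; its smoothness follows by passing to local trivializations of $E_0\to X_0$ and the induced trivializations of $E_1\to X_1$ used in \Cref{Prop:Decoliegpd}, where the inverse becomes a smooth projection. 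I expect the composition computation in (a) to be the main obstacle, since it requires juggling the semidirect product, the Peiffer identity, and the categorical-connection axioms simultaneously; everything else is bookkeeping.
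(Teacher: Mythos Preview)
Your proposal is correct and follows essentially the same route as the paper: the paper's proof of functoriality performs exactly the insertion of $(e,\tau(h_1^{-1}))(e,\tau(h_1))$ you describe, then applies the fifth axiom of \Cref{Def:categorical connection}, the interchange law, and the composition axiom for $\mathcal C$, and the equivariance argument likewise hinges on $\mathcal C(\gamma,pg')=\mathcal C(\gamma,p)(e,g')$ followed by a semidirect-product rearrangement. One small slip: the final collapse to $(h_1^{-1}h_2^{-1},e)$ uses the \emph{second} Peiffer identity $\alpha(\tau(h),h')=hh'h^{-1}$, not the first; and your points (c)--(d) on bijectivity and smoothness of the inverse are handled in the paper in the paragraph immediately preceding the lemma rather than in the proof itself.
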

In conclusion, we have the following characterization of principal $2$-bundles over Lie groupoids in terms of the categorical connections.  
\begin{proposition}\label{prop:Characterisdecorated}
	A principal $2$-bundle over a Lie groupoid is a decorated bundle if and only if it admits  a categorical connection.
\end{proposition}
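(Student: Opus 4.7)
The plan is to establish the two directions of the biconditional separately, both of which essentially assemble earlier results in the section. For the forward direction, suppose $\mb{E}(\mb{X},\mb{G})$ is isomorphic to a decorated bundle $\mb{E}^{\rm dec}$ arising from Proposition \ref{Prop:Decoliegpd}. Then by \Cref{Ex:CatconnDeco} the canonical assignment $(\gamma,p)\mapsto (\gamma,p,e_H)$ furnishes a categorical connection. The five defining axioms of \Cref{Def:categorical connection} are immediate from the explicit structure maps $\tilde{s},\tilde{t},\tilde{m},\tilde{u}$ listed in \Cref{Prop:Decoliegpd} together with the action formula \eqref{E:Actionondeco}; in particular the composition axiom is a direct reading of $\tilde{m}$, and the equivariance axiom uses that the subgroup $G \hookrightarrow H\rtimes G$ acts on the $H$-factor trivially on $e_H$.

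For the backward direction, I would start with a categorical connection $\mathcal{C}$ on $\pi\colon \mb{E}\to \mb{X}$. The first step is to promote $\pi_0\colon E_0\to X_0$ to a principal $G$-bundle over the Lie groupoid $\mb{X}$ in the sense of \Cref{definition:principal bundle over Lie groupoid}, by defining the action map $\mu(\gamma,p):=t(\mathcal{C}(\gamma,p))$ as in \eqref{actionwithcatcon}. The four axioms required of $\mu$ (unit, target-matching, composition, $G$-equivariance) translate directly from the corresponding axioms on $\mathcal{C}$. Having secured this data, I would form the associated decorated bundle $\mb{E}^{\rm dec}(\mb{X},\mb{G})$ via \Cref{Prop:Decoliegpd}, and then invoke \Cref{Lem:isodecgeneral}, which produces the desired isomorphism $(\theta,\mathrm{Id})\colon \mb{E}^{\rm dec}(\mb{X},\mb{G})\to \mb{E}(\mb{X},\mb{G})$ as an isomorphism of principal $\mb{G}$-bundles over $\mb{X}$.

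The main conceptual obstacle would ordinarily be the verification that the formula $\theta((\gamma,p),h) = \mathcal{C}(\gamma,p)(h^{-1},e)$ respects the composition in both groupoids and is $(H\rtimes G)$-equivariant, since this mixes the decoration bookkeeping with the Peiffer identities of the crossed module. However, this computation has already been executed in the proof of \Cref{Lem:isodecgeneral}, so at the level of the present proposition the argument is essentially a packaging of existing results, and no new technical step is required beyond observing that the two constructions $\mathcal{C}\mapsto \mb{E}^{\rm dec}$ and $\mb{E}^{\rm dec}\mapsto \mathcal{C}^{\rm can}$ match up consistently.
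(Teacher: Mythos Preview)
Your proposal is correct and matches the paper's approach exactly: the paper treats this proposition as the concluding summary of the preceding discussion, with the forward direction handled by \Cref{Ex:CatconnDeco} and the backward direction by constructing $\mu$ via \eqref{actionwithcatcon} and then invoking \Cref{Lem:isodecgeneral}. No additional technical step is needed beyond what you have outlined.
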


\begin{corollary}\label{Corollary:discreteisdecorated}
	Let $\ghta$ be a Lie crossed module associated to a Lie $2$-group $\mb{G}$. Let $\mb{E}=[E_1\rra E_0]$ be a $\mb{G}$-bundle over $[X\rra X].$ Then $\mb{E}=[E_1\rra E_0]$ is the decorated bundle defined in \Cref{Ex:EoXodecobundle}.\end{corollary}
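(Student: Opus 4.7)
The plan is to apply \Cref{prop:Characterisdecorated} by exhibiting an explicit categorical connection on $\mb{E}=[E_1\rra E_0]$ and then identifying the resulting decorated bundle with the one constructed in \Cref{Ex:EoXodecobundle}. Since the base is the unit groupoid $[X\rra X]$, every arrow is of the form $1_x$ for some $x\in X$, so the pullback $s^*E_0=\{(1_x,p):\pi_0(p)=x\}$ is canonically diffeomorphic to $E_0$ via $(1_x,p)\mapsto p$.

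The first step is to define $\mathcal{C}\colon s^*E_0\to E_1$ by
\[
\mathcal{C}(1_x,p):=1^{\mb{E}}_{p},
\]
the identity morphism at $p$ in the Lie groupoid $\mb{E}$. I would then verify the five axioms of \Cref{Def:categorical connection}: conditions $s(1^{\mb{E}}_p)=p$ and $\mathcal{C}(1_x,p)=1_p$ are immediate; the identity $\pi_1(1^{\mb{E}}_p)=1_{\pi_0(p)}=1_x$ follows from functoriality of $\pi$; the composition axiom reduces to $1_p\circ 1_p=1_p$ (noting that the compatibility hypothesis $p_2=t(\mathcal{C}(1_x,p_1))=p_1$ forces $p_1=p_2$); and the equivariance $\mathcal{C}(1_x,pg)=1_p\cdot 1_g=1_{pg}$ follows because the $\mb{G}$-action is functorial and the unit map preserves products. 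Smoothness and the embedding property of $\mathcal{C}$ follow from the fact that the unit map $E_0\to E_1$ of any Lie groupoid is a smooth embedding.

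Having produced a categorical connection, \Cref{prop:Characterisdecorated} gives an isomorphism of $\mb{G}$-bundles $\mb{E}\cong \mb{E}^{\rm dec}$, where $\mb{E}^{\rm dec}$ is built from the underlying principal $G$-bundle $\pi_0\colon E_0\to X$ and the action of the base groupoid on $E_0$ induced by $\mathcal{C}$ as in \Cref{actionwithcatcon}. The final step is to check that this induced action coincides with the trivial one used in \Cref{Ex:EoXodecobundle}: namely,
\[
\mu(1_x,p)=t\bigl(\mathcal{C}(1_x,p)\bigr)=t(1^{\mb{E}}_p)=p,
\]
so $\mu=\mathrm{pr}_2$ and consequently $(s^*E_0)^{\rm dec}\cong E_0\times H$ with the groupoid structure of \Cref{Prop:Decoliegpd} reducing precisely to the one described in \Cref{Ex:EoXodecobundle}.

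I do not anticipate any genuine obstacle; the only point requiring a little care is the canonical identification of $s^*E_0$ with $E_0$ in the groupoid $[X\rra X]$ and the verification that the isomorphism produced by \Cref{Lem:isodecgeneral} lands in the specific decorated model $[E_0\times H\rra E_0]$ rather than some equivalent but differently presented bundle. Once the trivial induced action $\mu(1_x,p)=p$ is observed, this identification becomes tautological.
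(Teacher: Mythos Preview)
Your proposal is correct and follows essentially the same approach as the paper: the paper's proof consists of the single observation that any such bundle admits the categorical connection $(1_x,p)\mapsto 1_p$, which is exactly the $\mathcal{C}$ you construct. Your additional verification that the induced action $\mu(1_x,p)=t(1_p)=p$ recovers the trivial action of \Cref{Ex:EoXodecobundle} is left implicit in the paper but is indeed needed to match the specific model, and the paper further remarks that this categorical connection is unique (forced by the fourth axiom of \Cref{Def:categorical connection}).
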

\begin{proof}
	Any such principal $2$-bundle admits a unique categorical connection $(1_x, p)\mapsto 1_p$ for $p\in E, x=\pi(p).$
\end{proof}
\begin{example}\label{E:Exampleprincipalpairlie2deco}
	The $[G\times G\rra G]$-bundle $[E_1\rra E_0]$ over $\mb{X}=[X_1\rra X_0]$ in \Cref{E:Exampleprincipalpairlie2} admits a categorical connection if and only if $E_0\ra X_0$ is a principal $G$-bundle over the Lie groupoid $\mb{X}=[X_1\rra X_0].$	Suppose $E_0\ra X_0$ is a principal $G$-bundle over the Lie groupoid $\mb{X}=[X_1\rra X_0]$ with respect to the action $\mu\colon X_1\times_s E_0\ra E_0.$ Then the correspondence with the decorated bundle is given by $(p, \gamma, q)\mapsto\bigl ((p, \gamma), g\bigr),$ where $\mu(\gamma, p)=q\, g.$

\end{example}


\section{Atiyah sequence for principal $2$-bundles over Lie groupoids}\label{Atiyahsequencefor$2$-groupbundle}
Let $G$ be a Lie group, and $ L(G)$ the Lie algebra of $G$. 
For a principal $G$-bundle $\pi\colon P\ra M$	one can associate  a short exact sequence 
\begin{equation}\label{Atiyahseqclass}
	0\ra {\rm Ad}(P)\xra{j^{/G}} {\rm At}(P)\xra{\pi_{*}^{/G}} TM\ra 0,
\end{equation}
of vector bundles over the manifold $M$, known as \textit{Atyiah sequence} \cite{MR86359}. Here ${\rm Ad}(P):=(P\times  L(G))/G\ra M$ and  ${\rm At}(P):= TP/G\ra M$ are respectively the  adjoint bundle  and the Atiyah  bundle 	associated to the principal $G$-bundle $\pi\colon P\ra M,$ and the associated maps are  respectively given by
\begin{equation}\label{E:JPi}
	\begin{split}
		&j^{/G}\colon [(p, B)]\mapsto [\bigl(p, \delta_p(B)\bigr)],\\
		&\pi_{*}^{/G}\colon [(p, v)]\mapsto \bigl(\pi(p), \pi_{*, p}(v)\bigr),	
	\end{split}
\end{equation}
where $\delta(B)$ denotes the vertical vector field generated by the element $B\in L(G)$. The Atiyah bundle $\At(P)\ra M$,  with  $\pi_{*}^{/G}:\At(P)\ra TM$ being the anchor map, forms a Lie algebroid, known as the  \textit{Atiyah Lie algebroid} of the principal $G$-bundle $\pi:P\ra M$.

A connection on a principal $G$-bundle $P(M, G)$ is a splitting of the Atiyah sequence in \Cref{Atiyahseqclass}. Equivalently, a connection  is same as  a retract $R\colon {\rm At}(P)\ra {\rm Ad}(P)$ of $j^{/G}$, or  a section $\sigma\colon TM\ra {\rm At}(P)$ of $\pi_*^{/G}$.	The first equivalent condition for the splitting  in turn  gives an $L(G)$-valued  differential $1$-form $\omega\colon TP\ra P\times  L(G)$ satisfying,
\begin{itemize}
	\item $\omega (p g, v g)={\rm ad}_{g^{-1}}(\omega(p, v))$, for $g\in G, p\in P, v\in T_pP$ and
	\item $\omega(p, \delta_p(B))=B,$ for $p\in P$, $B\in L(G),$ where as before $\delta(B)$ denotes the vertical vector field generated by the element $B\in L(G),$	
\end{itemize}	
while the second equivalent condition gives the horizontal sub-bundle $\mc{H}\subset TE$ satisfying,
\begin{itemize}
	\item $\mc{H}_{p g}=(R_g)_{*,p}\mc{H}_p$ for all $p\in P, g\in G$ and
	\item giving a smooth splitting  $\mc{H}_p\oplus \ker{\pi_{*, p}}=T_pP$ for each $p\in P.$
\end{itemize}	
Here, $R_g:P\ra P$ is the right translation map by the element $g\in G$.

We refer to \cite[Appendix $A$]{MR896907} for a more detailed discussion on this association. 

For a given $\mb{G}=[G_1\rra G_0]$-bundle $\mb{E}=[E_1\rra E_0]$ over the Lie groupoid $\mb{X}=[X_1\rra X_0,]$ we have a pair of Atiyah sequences,
\begin{equation}\label{Dia:Obj-MorAtiyah}
	\begin{tikzcd}[sep=small]
		0 \arrow[r] & {{\rm Ad}}(E_i) \arrow[dd] \arrow[rr, "j_i^{/G_i}"] &  & {\rm {At}}(E_i) \arrow[rr, "(\pi_i)_*^{/G_i}"] \arrow[dd] &  & TX_i \arrow[r] \arrow[dd] & 0 \\
		&                                                                    &  &                                                       &  &                           &   \\
		0 \arrow[r] & X_i \arrow[rr]                                                     &  & X_i \arrow[rr]                                        &  & X_i \arrow[r]             & 0
	\end{tikzcd},
\end{equation}
for $i\in\{0, 1\}$.	 These pairs of short exact sequences are intertwined by the functoriality of various maps. We will see the right framework to describe the Atiyah sequences of a
principal $2$-bundle is that of so-called `a Vector bundle groupoid' or `a VB-groupoid' in short. For more details about VB-groupoids, we refer the reader to \cite{Bursztyn2016163}. Let us recall the definition given in \cite{MR3696590}.

\begin{definition}[VB-groupoid {\cite[Definition $3.1.$]{MR3696590}}]
	Let $[X_1\rra X_0]$ be a Lie groupoid. A \textit{VB-groupoid over $[X_1\rra X_0]$} is given by a morphism of  Lie groupoids 
	\[
	\begin{tikzcd}[sep=small]
		V_1 \arrow[rr,"\pi_1"] \arrow[dd,xshift=0.75ex,"t_V"]
		\arrow[dd,xshift=-0.75ex,"s_V"'] &  & X_1 \arrow[dd,xshift=0.75ex,"t_X"].  		\arrow[dd,xshift=-0.75ex,"s_X"'] \\
		&  &                \\
		V_0 \arrow[rr,"\pi_0"]            &  & X_0           
	\end{tikzcd},\]
	satisfying the following conditions:
	\begin{enumerate}
		\item the maps $\pi_1\colon V_1\ra X_1$ and $\pi_0\colon V_0\ra X_0$ are vector bundles,
		\item the maps $(s_V,s_X),(t_V, t_X)$ are morphisms of vector bundles,
		\item  for appropriate $\gamma_1,\gamma_2,\gamma_3,\gamma_4\in E_1$, we have 
		$(\gamma_3\circ \gamma_1)+(\gamma_4\circ \gamma_2)=
		(\gamma_3+\gamma_4)\circ (\gamma_1+\gamma_2)$.
	\end{enumerate}
\end{definition}
Equivalently, a VB-groupoid is a Lie groupoid object in the category of smooth vector bundles.

A \textit{$1$-morphism of VB-groupoids} 
$[V_1\rra V_0]\ra [V_1'\rra V_0']$ over the base Lie groupoid $[X_1\rra X_0]$ is a morphism of Lie groupoids $\Phi=(\Phi_1.\Phi_0)\colon [V_1\rra V_0]\ra [V_1'\rra V_0']$ such that $(\Phi_0,\rm{Id}_{X_0})$, and $(\Phi_1,\rm{Id}_{X_1})$ are morphisms of vector bundles. For a pair of such  $1$-morphisms $(\Phi,\Phi'$, a \textit{$2$-morphsim}  $\eta\colon \Phi\Lrrw \Phi '$ is  a smooth natural transformation such that $(\eta, 1)$ is a morphism of vector bundles from $V_0\ra X_0$ to $V_1\ra X_1$.
\begin{proposition}	\label{Prop:naturaltransformationinVBgroupoids}
	The collection of VB-groupoids, $1$-morphisms and $2$-morphisms forms a strict $2$-category, and will be denoted as $2-\rm {VBGpd(\mb{X})}$.
\end{proposition}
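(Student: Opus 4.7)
The plan is to verify the strict $2$-category axioms on the data given: VB-groupoids over $\mb{X}$ as objects, VB-groupoid $1$-morphisms as $1$-cells, and $2$-morphisms $\eta\colon \Phi\Lrrw \Phi'$ as $2$-cells. Most of the verifications are inherited from the standard strict $2$-category of Lie groupoids and smooth natural transformations over $\mb{X}$, so the real content is checking that the \emph{linear} structure survives every operation.

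First I would handle horizontal composition of $1$-morphisms. Given $\Phi\colon \mb{V}\ra \mb{V}'$ and $\Psi\colon \mb{V}'\ra \mb{V}''$ over $\mb{X}$, the composite $\Psi\circ \Phi$ is a morphism of Lie groupoids over $\mb{X}$ by functoriality; that $\Psi_i\circ \Phi_i$ is a morphism of vector bundles covering $\mathrm{Id}_{X_i}$ for $i=0,1$ is immediate from the corresponding fact for each factor. Identity $1$-morphisms are the identity functors, which trivially satisfy the axioms; strict associativity and the unit laws at the $1$-cell level are inherited from the underlying Lie groupoid structure.

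Next I would verify vertical composition of $2$-morphisms. Given $\eta\colon \Phi\Lrrw \Phi'$ and $\eta'\colon \Phi'\Lrrw \Phi''$, define $(\eta'\cdot \eta)(v):=\eta'(v)\circ \eta(v)$ for $v\in V_0$; this is again a natural transformation by a routine check using the unit axioms in $\mb{V}''$. The key non-trivial point is that $(\eta'\cdot \eta,\mathbf{1})$ must still be a morphism of vector bundles from $V_0\ra X_0$ to $V_1\ra X_1$, i.e., fiberwise linear in $v$. This is where I expect the main obstacle to sit, and it is resolved exactly by the third axiom in the definition of a VB-groupoid,
\[
(\gamma_3\circ \gamma_1)+(\gamma_4\circ \gamma_2)=(\gamma_3+\gamma_4)\circ (\gamma_1+\gamma_2),
\]
applied with $\gamma_1=\eta(v),\gamma_2=\eta(v'),\gamma_3=\eta'(v),\gamma_4=\eta'(v')$, and analogously for scalar multiplication; this asserts that composition in $V_1''$ is biadditive on the relevant fibers, which is precisely what is needed to propagate linearity through vertical composition. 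Identity $2$-morphisms are given by $v\mapsto 1_{\Phi_0(v)}$; they are linear over $X_0$ because the unit map $1\colon V_0''\ra V_1''$ of the target VB-groupoid is itself a vector bundle morphism, a consequence of the same compatibility axiom.

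Finally I would handle horizontal composition (Godement product / whiskering) of $2$-morphisms and the interchange law. Given $\Phi,\Phi'\colon \mb{V}\ra \mb{V}'$, $\Psi,\Psi'\colon \mb{V}'\ra \mb{V}''$, and $\eta\colon \Phi\Lrrw \Phi'$, $\theta\colon \Psi\Lrrw \Psi'$, the formula $(\theta\ast \eta)(v):=\theta(\Phi'_0(v))\circ \Psi_1(\eta(v))$ defines a natural transformation $\Psi\circ \Phi\Lrrw \Psi'\circ \Phi'$; linearity follows because each of $\theta,\Psi_1,\Phi'_0$ is linear over the appropriate base and, again, composition in $V_1''$ is biadditive. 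The interchange law between horizontal and vertical composition reduces pointwise to the corresponding interchange law for natural transformations in the $2$-category of Lie groupoids, so it holds automatically, and linearity has already been established on both sides. Assembling these pieces yields the claimed strict $2$-category $2\text{-}\mathrm{VBGpd}(\mb{X})$.
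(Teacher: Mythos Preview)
Your proof is correct. The paper, however, states this proposition without proof, treating it as a routine verification; so there is nothing to compare against beyond noting that you have supplied exactly the details the paper omits. Your identification of the one genuinely non-formal point --- that fiberwise linearity of vertical and horizontal composites of $2$-morphisms relies on the interchange axiom $(\gamma_3\circ \gamma_1)+(\gamma_4\circ \gamma_2)=(\gamma_3+\gamma_4)\circ (\gamma_1+\gamma_2)$ in the target VB-groupoid --- is the correct emphasis.
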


	\subsection{Atiyah sequence  of VB-groupoids}       

Let $\mb{E}= [E_1\rra E_0]$ be a $\mb{G}= [G_1\rra G_0]$-bundle over $\mb{X}=[X_1\rra X_0].$ Then we show  that the pair of short exact sequences in Diagram \ref{Dia:Obj-MorAtiyah} give a short exact sequence 
of VB-groupoids. To be precise, each pair   $\{TX_i\}_{i\in\{0, 1\}}$, $\{\Ad(E_i)\}_{i\in\{0, 1\}}$ and $\{{\rm At}(E_i)\}_{i\in\{0, 1\}} $    forms a VB-groupoid over $\mb{X}=[X_1\rra  X_0],$ and the sequences define a short exact sequence of 
morphisms of VB-groupoids \cite[Section $2.4$]{MR3744376}. 

\noindent{ \bf{VB-groupoid $T\mb{X}.$}}

It is obvious that the tangent bundle maps $TX_1\ra X_1$ and $TX_0\ra X_0$  give a VB-groupoid $[TX_1\rra TX_0]\ra [X_1\rra X_0]$.

\noindent{ \bf{VB-groupoid $\Ad(\mb{E}).$}}	

To see  the groupoid structure of $[{\rm Ad}(E_1)\rra {\rm Ad}(E_0)],$ we first define 	
\begin{itemize}
	\item the source map $s^{/\sim}\colon {\rm Ad}(E_1)\ra {\rm Ad}(E_0)$  as \[[(\widetilde \gamma, K)]\mapsto [(s(\widetilde \gamma),s_{*,e}(K))],\]
	\item the target map $t^{/\sim}\colon {\rm Ad}(E_1)\ra {\rm Ad}(E_0)$  as \[[(\widetilde \gamma, K)]\mapsto [(t(\widetilde\gamma),t_{*,e}(K))],\]
\end{itemize}
where $\widetilde \gamma\in E_1$ and $K\in L(G_1).$
Next we make the following observation. Suppose \begin{equation}\label{E:Sourcetargetad}
	t^{/\sim}([(\widetilde \gamma_2 ', K_2')])=s^{/\sim}([(\widetilde \gamma_1, K_1)]).
\end{equation} 
Then there exists a $\theta \in G_0$ such that $s(\widetilde \gamma_2')\theta=t(\widetilde \gamma_1)$ and ${\ad}_{\theta}(s_{*, e}(K_2'))=t_{*, e}(K_1).$
That means $(\widetilde \gamma_2', K_2')1_{\theta}=\bigl(\widetilde \gamma_2'\, 1_{\theta}, \ad_{1_{\theta}}(K_2')\bigr)$ is composable with $(\widetilde\gamma_1, K_1).$ Note that $(\widetilde \gamma_2', K_2')1_{\theta}\in [(\widetilde \gamma_2, K_2)].$ In other words, whenever the condition in \Cref{E:Sourcetargetad}	met, there exists a composable pair respectively belonging to $ [(\widetilde \gamma_2, K_2)]$ and $[(\widetilde \gamma_1, K_1)].$
We choose such a pair and define the composition as,
\begin{equation}\label{E:Compoad}
	\bigl([(\widetilde \gamma_2, K_2)]\bigr)\circ \bigl([(\widetilde \gamma_1, K_1)]\bigr)=[\bigl(\widetilde \gamma_2\circ \widetilde \gamma_1, K_2\circ K_1\bigr)].
\end{equation} 
If $(\widetilde \gamma_2', K_2')$ and $(\widetilde \gamma_1', K_1')$ are	another pair of such elements, then there exist composable $k_2, k_1\in G_1$ such that
$(\widetilde \gamma_2', K_2')=\bigl(\widetilde \gamma_2 k_2, \ad_{k_2}(K_2)\bigr)$ and $(\widetilde \gamma_1', K_1')=\bigl(\widetilde \gamma_1 k_1, \ad_{k_1}(K_1)\bigr).$
Then \begin{equation}
	\begin{split}
		&(\widetilde \gamma_2' \circ \widetilde \gamma_1', K_2'\circ K_1')\\
		&=\bigl((\widetilde \gamma_2 k_2)\circ (\widetilde \gamma_1 k_1),   (\ad_{k_2}(K_2))\circ   (\Ad_{k_1}(K_1)) \bigr)\\
		&=\bigl((\widetilde \gamma_2\circ \widetilde \gamma_1)(k_2 k_1),   (\ad_{k_2 k_1}(K_2\circ K_1)) \bigr)\\
		&=(\widetilde \gamma_2 \circ \widetilde \gamma_1, K_2\circ K_1) (k_2 k_1),
	\end{split}
\end{equation}
where in the third step we have used the functoriality of the Lie $2$-group action. Thus \Cref{E:Compoad} is well defined. The inverse and identity maps are the obvious ones. 

Considering the following commutative diagram
\[\begin{tikzcd}
	E_1 \times L(G_1) \arrow[rr, "\delta_0"] \arrow[d, "s_{\mb{E}} \times s_{{\mb{G}}_{*,e}}"'] &  & {\rm Ad}(E_1) \arrow[d, "s^{/\sim}", dotted] \\
	E_0 \times L(G_0) \arrow[rr, "\delta_1"']                &  & {\rm Ad}(E_0)
\end{tikzcd},\]
we see that as  $s_{\mb E}$ and $s_{{\mb{G}}_{*,e}}$ are surjective submersions, we  have $s^{/\sim} \circ \delta_0$  a surjective submersion. Since $\delta_0$ is a surjective submersion, it follows that $s^{/\sim}$ is also a surjective submersion. Similarly, $t^{/\sim}$ is  also a surjective submersion. Thus $\Ad{(\mb{E})}=[\Ad(E_1)\rra \Ad(E_0)]$ is a Lie groupoid.

The (quotient) vector bundles ${\rm Ad}(E_1)\ra X_1$, and ${\rm Ad}(E_0)\ra X_0$ form a  VB-groupoid $[{\rm Ad}(E_1)\rra {\rm Ad}(E_0)]\ra [X_1\rra X_0]$.

\noindent{ \bf{VB-groupoid ${\rm At}(\mb{E}).$}}

We leave out the technical details of the construction of the Lie groupoid  $[{\rm At}(E_1)\rra {\rm At}(E_0)],$ which are almost identical  to the construction  of the Lie groupoid $\Ad{(\mb{E})}=[\Ad(E_1)\rra \Ad(E_0)].$
The Lie groupoid structure on $[{\rm At}(E_1)\rra {\rm At}(E_0)]$
has the following description. The source, target maps are respectively given by
\begin{itemize}
	\item  $s_*^{/\sim}\colon {\rm At}(E_1)\ra {\rm At}(E_0),$ \[[(\widetilde \gamma, \widetilde X)]\mapsto [(s(\widetilde \gamma),s_{*,\widetilde \gamma}(\widetilde X))],\]
	\item $t_*^{/\sim}\colon {\rm At}(E_1)\ra {\rm At}(E_0),$ \[[(\widetilde \gamma, \widetilde X)]\mapsto [(t(\widetilde \gamma),t_{*,\widetilde \gamma}(\widetilde X))]\]	
\end{itemize}
and composition is given by 
\begin{equation}\label{E:AtCompo}
	[(\widetilde \gamma_2, \widetilde X_2)]\circ [(\widetilde \gamma_1, \widetilde X_1)]=[(\widetilde \gamma_2\circ \widetilde \gamma_1, \widetilde X_2\circ \widetilde X_1)]
\end{equation}
for appropriate choice of representative elements of the equivalence classes. Then the vector bundles ${\rm At}(E_1)\ra X_1$ and ${\rm At}(E_0)\ra X_0$ give a VB-groupoid $[{\rm At}(E_1)\rra {\rm At}(E_0)]\ra [X_1\rra X_0]$.

\begin{lemma}
	Let $\mb{E}=[E_1\rra E_0]$ be a $\mb{G}=[G_1\rra G_0]$-bundle over the Lie groupoid $\mb{X}=[X_1\rra X_0].$
	The vertical vector fields generating maps $\delta_p\colon L(G_0)\ra T_p E_0$ and $\delta_{(p\xra{\widetilde \gamma} q)}\colon L(G_1)\ra T_{\widetilde \gamma}E_1$ define a functor $\delta\colon \mb{E}\times L(\mb{G})\ra T(\mb{E}).$ Moreover the 
	functor $\delta$ is $\mb{G}$ equivariant in the sense that, $\delta \bigl(p g, {\ad}_{g^{-1}}(B)\bigr)=\delta (p, B) \cdot g$ and $\delta \bigl(\widetilde \gamma k, {\ad}_{k^{-1}}(K)\bigr)=\delta (\widetilde \gamma, K) \cdot k,$
	for any $p\in E_0, g\in G_0, B\in L(G_0)$ and $\widetilde \gamma\in E_1, k\in G_1, K\in L(G_1).$
\begin{proof}
It is an immediate consequence of the functoriality of the Lie $2$-group action. 
\end{proof}
\end{lemma}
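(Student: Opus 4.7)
The plan is to view $\delta$ as (the restriction to the zero section of $L(\mb{G})$ of) the differential of the action functor $\rho\colon \mb{E}\times\mb{G}\to\mb{E}$ along the unit of $\mb{G}$. Once this identification is made, both functoriality and equivariance reduce to chain rule plus the functoriality of $\rho$ (for the first claim) and associativity of the action (for the second). Concretely, at the object level I write $\delta_p(B)=\tfrac{d}{dt}\big|_{t=0}p\cdot\exp(tB)$, and at the morphism level $\delta_{\widetilde\gamma}(K)=\tfrac{d}{dt}\big|_{t=0}\widetilde\gamma\cdot\exp(tK)$, using the exponential maps of $G_0$ and $G_1$ respectively.

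To verify that $\delta$ is a functor $\mb{E}\times L(\mb{G})\to T\mb{E}$, I would check source, target, composition and unit compatibility separately. Source compatibility $s_{T\mb{E}}\circ\delta_1=\delta_0\circ(s_{\mb{E}}\times s_{L(\mb{G})})$ reads $s_{*,\widetilde\gamma}\bigl(\tfrac{d}{dt}|_0\widetilde\gamma\cdot\exp(tK)\bigr)=\tfrac{d}{dt}|_0\, s(\widetilde\gamma)\cdot\exp\bigl(t\,s_{*,e}K\bigr)$, which follows from the chain rule combined with the functoriality identity $s(\widetilde\gamma\cdot k)=s(\widetilde\gamma)\cdot s(k)$ for $\rho$; target compatibility is the same argument with $t$ in place of $s$. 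For composition, given composable pairs $(\widetilde\gamma_i,K_i)\in\mb{E}\times L(\mb{G})$, I would use the interchange law for the Lie $2$-group action, $(\widetilde\gamma_2\circ\widetilde\gamma_1)\cdot(\exp(tK_2)\circ\exp(tK_1))=(\widetilde\gamma_2\cdot\exp(tK_2))\circ(\widetilde\gamma_1\cdot\exp(tK_1))$, differentiate at $t=0$, and invoke the definition of composition in $T\mb{E}$ as the differential of composition in $\mb{E}$ to conclude $\delta_{\widetilde\gamma_2\circ\widetilde\gamma_1}(K_2\circ K_1)=\delta_{\widetilde\gamma_2}(K_2)\circ\delta_{\widetilde\gamma_1}(K_1)$. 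Unit compatibility $\delta_{1_p}(0_B)=0_{\delta_p(B)}$ follows from $1_p\cdot 1_{\exp(tB)}=1_{p\exp(tB)}$, a consequence of functoriality of $\rho$, differentiated at $t=0$.

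For the equivariance, I would use the standard identity $\exp(t\,\ad_{g^{-1}}B)=g^{-1}\exp(tB)g$ in $G_0$, together with the associativity of the action of $G_0$ on $E_0$, to compute $\delta(pg,\ad_{g^{-1}}B)=\tfrac{d}{dt}|_0(pg)\cdot(g^{-1}\exp(tB)g)=\tfrac{d}{dt}|_0(p\exp(tB))\cdot g$, and the last expression equals $\delta_p(B)\cdot g$ by the very definition of the notation $v\cdot g$ as the differential of right translation. The morphism-level equivariance $\delta(\widetilde\gamma\cdot k,\ad_{k^{-1}}K)=\delta(\widetilde\gamma,K)\cdot k$ is verbatim the same calculation carried out in $G_1$ acting on $E_1$.

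I do not anticipate a real obstacle; the only bookkeeping point worth flagging is that composability in $\mb{E}\times L(\mb{G})$ implies composability of the images in $T\mb{E}$, which is built into the source/target compatibility already verified. Everything else is a direct application of the chain rule and the functoriality/associativity of $\rho$, which is precisely why the authors remark that the statement is immediate.
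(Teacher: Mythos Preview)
Your proposal is correct and is exactly an unpacking of the paper's one-line proof: the paper merely says the claim ``is an immediate consequence of the functoriality of the Lie $2$-group action,'' and you have written out precisely what that means---differentiating the action along one-parameter subgroups and invoking the interchange law and associativity of $\rho$. There is no difference in approach, only in level of detail.
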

Now it is  evident  that the maps $\{j_i^{/G_i}\}_{i\in \{0, 1\}}$ and $\{(\pi_i)_*^{/G_i}\}_{i\in \{0, 1\}}$ in Diagram \ref{Dia:Obj-MorAtiyah} respectively define the morphisms of VB-groupoids, $j^{/\mb{G}}\colon \Ad{\mb{E}}\ra {\rm At}(\mb{E})$ and $\pi_*^{/\mb{G}}\colon {\rm At}(\mb{E})\ra T\mb{X}.$ Hence we conclude the following.
\begin{proposition}\label{Prop:AtiyahLie2gpd}
	Let $\mb{E}=[E_1\rra E_0]$ be a  $\mb{G}=[G_1\rra G_0]$-bundle over the Lie groupoid $\mb{X}=[X_1\rra X_0].$ Then we have a short exact sequence 
	\begin{equation}\label{E:Atiyahgpd}
		\begin{tikzcd}
			0 \arrow[r, ""] & {\rm Ad}(\mb{E})    \arrow[r, "j^{/\mb{G}}"] & {\rm At}(\mb{E})  \arrow[r, "\pi_{*}^{/\mb{G}}"] & T \mb{X} \arrow[r, ""] & 0
		\end{tikzcd}
	\end{equation}
	of VB-groupoids  over $\mb{X}=[X_1\rra X_0].$	
\end{proposition}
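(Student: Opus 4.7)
The plan is to reduce the statement to the classical Atiyah sequences at the object and morphism levels, which have already appeared in Diagram \ref{Dia:Obj-MorAtiyah}, and then check that the assembled maps are morphisms of VB-groupoids and that exactness holds. Since the VB-groupoids $T\mb{X},$ ${\rm Ad}(\mb{E}),$ ${\rm At}(\mb{E})$ have already been constructed in the preceding discussion, only three things remain: (i) define $j^{/\mb{G}}$ and $\pi_*^{/\mb{G}}$ at the level of morphisms of the corresponding Lie groupoids, (ii) verify that they are morphisms of VB-groupoids (i.e.\ smooth functors that are also fibrewise linear), and (iii) verify exactness.

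First, I would define
\[
j^{/\mb{G}}\colon [(\widetilde\gamma, K)]\longmapsto [(\widetilde\gamma, \delta_{\widetilde\gamma}(K))], \qquad \pi_*^{/\mb{G}}\colon [(\widetilde\gamma, \widetilde X)]\longmapsto \bigl(\pi_1(\widetilde\gamma), (\pi_1)_{*,\widetilde\gamma}(\widetilde X)\bigr),
\]
on morphisms, and analogously on objects using $p\in E_0$ in place of $\widetilde\gamma.$ Well-definedness on equivalence classes follows from the $\mb{G}$-equivariance of $\delta$ recorded in the lemma just before the proposition, together with the standard identities $\pi_{*}\circ R_{g*}=\pi_*$ and the functoriality of the tangent map applied to the $\mb{G}$-action. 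Compatibility with the source and target of ${\rm Ad}(\mb{E}),$ ${\rm At}(\mb{E}),$ and $T\mb{X}$ is immediate from the way those structure maps were defined (apply $s_*$ or $t_*$ inside the brackets and note that this commutes with $\delta$ and with $(\pi_1)_*$ by functoriality of $\pi$ and of the tangent construction).

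Next comes functoriality with respect to composition. For $j^{/\mb{G}}$ I would use the lemma that $\delta\colon \mb{E}\times L(\mb{G})\ra T\mb{E}$ is a functor: on a composable pair of representatives we obtain
\[
\delta_{\widetilde\gamma_2\circ \widetilde\gamma_1}(K_2\circ K_1)=\delta_{\widetilde\gamma_2}(K_2)\circ \delta_{\widetilde\gamma_1}(K_1),
\]
which together with the composition law \Cref{E:Compoad} and \Cref{E:AtCompo} gives $j^{/\mb{G}}([(\widetilde\gamma_2,K_2)]\circ [(\widetilde\gamma_1,K_1)])=j^{/\mb{G}}[(\widetilde\gamma_2,K_2)]\circ j^{/\mb{G}}[(\widetilde\gamma_1,K_1)].$ For $\pi_*^{/\mb{G}}$ the same kind of argument works: since $\pi\colon \mb{E}\ra \mb{X}$ is a smooth functor, its tangent $T\pi\colon T\mb{E}\ra T\mb{X}$ is a functor, so $(\pi_1)_{*,\widetilde\gamma_2\circ\widetilde\gamma_1}(\widetilde X_2\circ \widetilde X_1)=(\pi_1)_{*,\widetilde\gamma_2}(\widetilde X_2)\circ (\pi_1)_{*,\widetilde\gamma_1}(\widetilde X_1).$ Fibrewise linearity over $X_0$ and over $X_1$ is exactly the classical statement that $j_i^{/G_i}$ and $(\pi_i)_*^{/G_i}$ are vector bundle morphisms for $i=0,1,$ so $j^{/\mb{G}}$ and $\pi_*^{/\mb{G}}$ are $1$-morphisms in $2\text{-}\mathrm{VBGpd}(\mb{X}).$

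Finally, exactness of the sequence of VB-groupoids is tested on objects and morphisms separately, i.e.\ by checking exactness of the two rows of Diagram \ref{Dia:Obj-MorAtiyah} (this is the sense of exactness used in \cite{MR3744376}). Both of those rows are the classical Atiyah sequence of the principal bundles $\pi_0\colon E_0\ra X_0$ and $\pi_1\colon E_1\ra X_1,$ and hence are short exact sequences of vector bundles. The main subtlety I expect is the verification that the composition law in $\Ad(\mb{E})$ and $\At(\mb{E})$ is independent of the choice of representatives (needed because the composition in \Cref{E:Compoad} and \Cref{E:AtCompo} uses a particular lift after translating by $1_\theta$); once one is convinced of this, the functoriality checks above become routine, and the rest of the proposition is a matter of assembling pieces that are already in place.
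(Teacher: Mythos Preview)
Your proposal is correct and follows essentially the same approach as the paper: the paper constructs the three VB-groupoids, proves the lemma that $\delta\colon \mb{E}\times L(\mb{G})\to T\mb{E}$ is a $\mb{G}$-equivariant functor, and then simply declares that it is ``evident'' that $j^{/\mb{G}}$ and $\pi_*^{/\mb{G}}$ are morphisms of VB-groupoids, concluding the proposition. You have spelled out the routine verifications (well-definedness via equivariance of $\delta$, functoriality via the lemma on $\delta$ and $T\pi$, fibrewise linearity and exactness via the classical Atiyah sequences at levels $i=0,1$) that the paper leaves implicit.
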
 
We call the short exact sequence  above the \textit{Atiyah sequence} associated to the  $[G_1\rra G_0]$-bundle  $[E_1\rra E_0]\ra [X_1\rra X_0]$.

\section{Connection on principal $2$-bundles over Lie groupoids} \label{Connection on principal $2$-bundles over Lie groupoids}
We have observed that in a traditional principal bundle setup, a splitting of the associated Atiyah sequence gives a connection on the principal bundle. We can adapt the same idea for the framework of the Atiyah sequence of a principal $2$-bundle  (Diagram \ref{E:Atiyahgpd}). We will see that the categorical structure involved in a principal $2$-bundle offers a more enriched differential geometric connection structure.

\subsection{Connection as a splitting of the Atiyah sequence}
\begin{definition}[strict connection and semi strict connection] \label{strict and semi-strict connection definition}
	Let $\mb{G}$ be a Lie $2$-group. Let  $\pi\colon \mb{E} \rightarrow \mb{X}$ be 
	a $\mb{G}$-bundle over a Lie groupoid $\mb{X}$, and 	
	
	\begin{equation}\label{Dia:Obj-MorAtiyahsplitting}
		\begin{tikzcd}
			0 \arrow[r, ""] & {\rm Ad}(\mb{E})    \arrow[r, "j^{/\mb{G}}"] & {\rm At}(\mb{E})  \arrow[r, "\pi_{*}^{/\mb{G}}"] & T \mb{X} \arrow[r, ""] & 0
		\end{tikzcd}
	\end{equation}
	its associated Atiyah sequence. A morphism $R: {\rm At}(\mb{E}) \ra {\rm Ad}(\mb{E})$ of VB-groupoids is said to be a \textit{strict connection} if
	$$ R \circ j^{/\mb{G}}=1_{{\rm Ad}(\mb{E})}.$$
	
	A morphism $R\colon  {\rm At}(\mb{E}) \ra {\rm Ad}(\mb{E})$ of VB-groupoids is said to be a  \textit{ semi-strict connection}  if  $R \circ j^{/\mb{G}}$ is  $2$-isomorphic to $1_{{\rm Ad}(\mb{E})},$
	$$R \circ j^{/\mb{G}}\simeq 1_{{\rm Ad}(\mb{E})}$$
	in the category 2-$\rm{VBGpd}(\mb{X})$  (\Cref{Prop:naturaltransformationinVBgroupoids}).
\end{definition}

\begin{definition}[category of strict and semi-strict connections] \label{category of strict connection}
	Let $\mb{G}$ be a Lie $2$-group, and $\pi\colon \mb{E} \rightarrow \mb{X}$ a  
	$\mb{G}$-bundle over a Lie groupoid $\mb{X}$. We define \textit{the category of  strict (resp. semi strict) connections  for  $\mb{E}(\mb{X},\mb{G})$}  as a category whose objects are  $R:{\rm At}(\mb{E}) \ra {\rm Ad}(\mb{E})$ such that $R$ is a strict (resp. semi strict) connections on $\mb{E}(\mb{X},\mb{G})$ and 
	morphisms are  $2$-morphisms  $\eta\colon R \Longrightarrow R'$ of 2-$\rm{VBGpd}(\mb{X}).$	
	We will denote the   categories of strict and semi strict connections  respectively by the notations $C_{\mb{E}}^{\rm{strict}}$ and $C_{\mb{E}}^{\rm{semi}}.$
\end{definition}

It is evident that  a strict connection gives a functorial section  $\begin{tikzcd}
	T \mb{X} 	   \arrow[r, "\Sigma"] & \At(\mb{E})		\end{tikzcd},$	
with respect 	to the VB-groupoid morphism $\begin{tikzcd}
	{\rm At}(\mb{E})  \arrow[r, "\pi_{*}^{/\mb{G}}"] & T \mb{X} 
\end{tikzcd},$
splitting the tangent bundles $TE_i\ra E_i, i\in\{0, 1\}$ into horizontal and vertical subbundles.  On the other hand for a semi-strict connection, the natural isomorphism $R \circ j^{/\mb{G}}\Lrrw1_{{\rm Ad}(\mb{E})}$	gives an obstruction for the corresponding VB-groupoid morphism  $\begin{tikzcd}
	T \mb{X} 	   \arrow[r, "\Sigma"] & {\rm At}(\mb{E})	\end{tikzcd}$ to be a section of $\begin{tikzcd}
	{\rm At}(\mb{E})  \arrow[r, "\pi_{*}^{/\mb{G}}"] & T \mb{X}. 
\end{tikzcd}$
The issue is related to the parallel transport with respect to a connection, which we will pursue in a subsequent paper.

Before providing a description of the connections on a principal $2$-bundle in terms of differential forms, we note an immediate consequence of our definition. 

\begin{lemma} \label{VB-groupoid=Lie groupoid}
	Let $\pi\colon \bigl(\mb{E} =[E_1\rra E_0]\bigr)\rightarrow \bigl(\mb{X}=[X_1\rra X_0]\bigr)$ be a $\mb{G}$-bundle over a Lie groupoid $\mb{X}$.  Let $R_0\colon {\rm At}(E_0) \rightarrow {\rm Ad}(E_0)$ and $R_1\colon {\rm At}(E_1) \rightarrow {\rm Ad}(E_1)$ be the splittings associated to be splittings of the respective Atiyah sequences. Then $R_0, R_1$ defines a strict connection on $\pi\colon \mb{E} \rightarrow \mb{X}$ if and only if $R_0, R_1$ define a morphism of  Lie groupoids from ${\rm At}(\mb{E})$ to ${\rm Ad}(\mb{E})$.
\end{lemma}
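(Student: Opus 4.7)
The statement essentially asserts that once each $R_i$ is known to be a vector bundle retraction of the respective Atiyah sequence, the extra content in ``strict connection'' boils down entirely to functoriality. So the plan is to unpack the definition of a morphism of VB-groupoids (namely, a Lie groupoid morphism whose object and morphism components are vector bundle morphisms covering $\mathrm{Id}_{X_0}$ and $\mathrm{Id}_{X_1}$ respectively), verify the vector bundle condition for free from the hypotheses, and then isolate the Lie groupoid morphism condition as the only remaining content.

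The forward direction is immediate: if $R=(R_1,R_0)$ is a strict connection, then by \Cref{strict and semi-strict connection definition} it is a morphism of VB-groupoids, and any such is in particular a morphism of Lie groupoids ${\rm At}(\mb{E})\to{\rm Ad}(\mb{E})$. For the converse, I would argue as follows. First, recall that each $R_i\colon {\rm At}(E_i)\to {\rm Ad}(E_i)$ is a splitting of the (ordinary) Atiyah sequence over $X_i$, hence by construction $R_i$ covers $\mathrm{Id}_{X_i}$ and is $\mb{R}$-linear on fibers; therefore $(R_i,\mathrm{Id}_{X_i})$ is a morphism of vector bundles. Thus condition~(1) in the definition of a morphism of VB-groupoids is automatic. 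Second, the hypothesis that $(R_1,R_0)$ is a Lie groupoid morphism provides precisely the compatibility with the source, target, unit, and composition of $\At(\mb{E})$ and $\Ad(\mb{E})$. Combining these two observations shows that $(R_1,R_0)$ is a morphism of VB-groupoids.

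It remains to check the retraction identity $R\circ j^{/\mb{G}}=1_{{\rm Ad}(\mb{E})}$ as an equation of VB-groupoid morphisms. This reduces to the two componentwise identities $R_0\circ j_0^{/G_0}=1_{{\rm Ad}(E_0)}$ and $R_1\circ j_1^{/G_1}=1_{{\rm Ad}(E_1)}$, which hold by the assumption that the $R_i$ are splittings of the Atiyah sequences (\Cref{Atiyahseqclass}). This verifies the definition of a strict connection in \Cref{strict and semi-strict connection definition}.

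The only mildly non-routine step, which I would flag explicitly, is checking that the Lie groupoid morphism assumption on $(R_1,R_0)$ genuinely suffices (i.e.\ that one need not separately impose linearity at the source/target map level): this is a consequence of point~(2) of the VB-groupoid definition being automatic from the fact that the equivalence classes in $\At(\mb{E})$ and $\Ad(\mb{E})$ were constructed precisely so that the structural maps are vector bundle morphisms (see the descriptions of $s^{/\sim}$, $t^{/\sim}$, $s_*^{/\sim}$, $t_*^{/\sim}$ in \Cref{Atiyahsequencefor$2$-groupbundle}). Given this, the argument is essentially a matter of matching the two definitions, and no further computation is required.
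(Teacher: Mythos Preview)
Your argument is correct. The paper states this lemma without proof, treating it as an immediate consequence of the definitions, and your proposal is precisely the natural unpacking: the vector-bundle-morphism conditions come for free from the hypothesis that each $R_i$ is a classical Atiyah splitting, so the only additional content in being a VB-groupoid morphism is the Lie groupoid functoriality, while the retraction identity $R\circ j^{/\mb{G}}=1_{{\rm Ad}(\mb{E})}$ is checked componentwise.
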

\begin{remark}
	For a given  $\mb{G}=[G_1\rra G_0]$-bundle $\mb{E}=[E_1\rra E_0]$ over $\mb{X}=[X_1\rra X_0],$ we have a simplicial principal $\mb{G}_\bullet=\{G_i\}_{i\geq 0}$ bundles $\mb{E}_{\bullet}=\{E_i\}_{i\geq 0}$ over ${\mb X}_\bullet=\{X_i\}_{i\geq 0},$ where $G_\bullet, E_\bullet, X_\bullet$ are the respective simplicial manifolds of groupoid nerves associated to Lie groupoids $\mb{G}=[G_1\rra G_0], \mb{E}=[E_1\rra E_0]$ and $\mb{X}=[X_1\rra X_0].$ Note each $G_i$ has a natural Lie group structure compatible with the composition.  
	Then one can further extend  Diagram \ref{Dia:Obj-MorAtiyah} to obtain a family of Atiyah sequences
	\begin{equation}\nonumber
		\begin{tikzcd}[sep=small]
			0 \arrow[r] & {{\rm Ad}}(E_i) \arrow[dd] \arrow[rr, "j_i^{/G_i}"] &  & {\rm {At}}(E_i) \arrow[rr, "(\pi_i)_*^{/G_i}"] \arrow[dd] &  & TX_i \arrow[r] \arrow[dd] & 0 \\
			&                                                                    &  &                                                       &  &                           &   \\
			0 \arrow[r] & X_i \arrow[rr]                                                     &  & X_i \arrow[rr]                                        &  & X_i \arrow[r]             & 0
		\end{tikzcd},
	\end{equation}
	for $i\geq 0,$ where $i$-th level is related to $i-1$-th level by the usual face and degeneracy  maps. In our follow-up paper we will introduce a Chern-Weil theory by incorporating the Atiyah sequences as above into our framework. 
\end{remark}

\subsection{Connections as  differential forms}
\begin{definition}
	\label{Definition:LGvaluedformOnLiegroupoid}
	Let $\mb{G}=[G_1\rra G_0]$ be a Lie $2$-group, and $[E_1\rra E_0]$ a Lie groupoid. An \textit{$L(\mb{G})$-valued $1$-form on $[E_1\rra E_0]$} is a morphism of Lie groupoids 
	$\omega:=(\omega_1,\omega_0)\colon T\mb{E} \rightarrow L(\mb{G})$ such that  $\omega_i$ is an $L(G_i)$-valued differential $1$-form on $E_i,$ for $i\in\{0, 1\}.$
	If $\mb{G}$ acts on $\mb{E}$ and $\omega\colon T\mb{E} \ra L(\mb{G})$ is  $\mb{G}$-equivariant, then $\omega$ called a \textit{$\mb{G}$-equivariant $1$-form}.
\end{definition}

The notion of a differential form 
on a Lie groupoid, called \textit{multiplicative forms}, is already available in literature \cite{bursztyn2009linear}. It would be illustrative  to express an $L(\mb{G})$ valued differential $1$-form  in terms of the crossed module to see the relation between our 
definition and the notion of multiplicative forms. For that, suppose $\ghta$ is the Lie crossed module associated to $\mb{G}.$ Then $G_0=G$ and $G_1=\hrtag$ and $L(G_0)=L(G),$ $L(G_1)=L(H)\oplus L(G).$ Let 
$\omega:=(\omega_1,\omega_0)\colon T\mb{E} \rightarrow L(\mb{G})$	is an $L(\mb{G})$-valued $1$-form.  
Let $\omega_{1 G}, \omega_{1 H}$ respectively be the $L(G)$ and $L(H)$-valued components of $\omega_1$. The functoriality condition on $\omega,$ can be expressed as:
\begin{equation}\label{E:Relmultipl}
	\begin{split}
		&\omega_{1 G}=s^{*}\omega_0,\\
		&t^{*}\omega_0=s^{*}\omega_0+\tau (\omega_{1 H}),\\
		&m^{*}\omega_{1 H}={\rm pr}_1^{*}\omega_{1 H}+{\rm pr}_2^{*}\omega_{1 H},
	\end{split}
\end{equation}
where $m\colon E_1\times_{s, E_0, t}E_1 \ra E_1$ is the composition map and $\pr_i\colon E_1\times_{s, E_0, t}E_1 \ra E_1, i\in \{1, 2\}$ are the projection maps to respectively first and second components. Precisely the third condition implies 
$\omega_{1 H}$ is an $L(H)$-valued multiplicative form on the groupoid $[E_1\rra E_0].$ In particular, if $G=e$ in the crossed module set-up, we recover the definition of a multiplicative form given in  \cite{bursztyn2009linear}.

We also refer to \cite{MR3894086} for a different, but related,  notion of an  $L(\mb{G})$-valued differential form on $\mb{E}=[E_1\rra E_0],$ where the author considers 
simplicial manifold ${\mb{E}}_{\bullet}$ defined by the nerve of the Lie groupoid $\mb{E},$ and defines an $L(\mb{G})$-valued differential form as a `suitably chosen' subcomplex of the double complex $T^p:=\bigoplus_{p=i+j+k}\Omega^i({E_j}, {\mathfrak G}_{k}),$ where ${\mathfrak G}_{-1}=L(G), {\mathfrak G}_0=L(H), {\mathfrak G}_i=0\, \forall\, i\neq -1\, \rm{or}\, 0.$ While the choice of the subcomplex
eluded above is in fact motivated by the connection structure on a  principal $2$-bundle over a manifold,
 motivation for our definition of $L(\mb{G})$-valued differential forms is to find an infinitesimal representation of the strict and semi-strict connection arising out of splitting of the Atiyah sequence. Note further if we attach an $L(H)$-valued differential $2$-form on $E_0$ with our differential form $\omega$ in 
\Cref{Definition:LGvaluedformOnLiegroupoid} we obtain a differential $1$-form as per 
the definition in \cite{MR3894086}. 

\begin{example}\label{E:Hemultiplicative}
	Let $\mb{E}=[E_1\rra E_0]$ be a Lie groupoid. Suppose the Lie $2$-group $\widehat H:=[H\rra {e}]$, associated to a Lie group $H,$ acts on $\mb{E}.$   Then an $L(\widehat H)$-valued 
	$1$-form on $\mb{E}$ is  same as an $L(H)$-valued multiplicative $1$-form on $\mb{E}.$ If the $L(H)$-valued multiplicative $1$-form on $\mb{E}$ is $H$-equivariant, then so is the corresponding $L(\widehat H)$-valued $1$-form 
	on $\mb{E}.$
\end{example}
\begin{example}
	If a  Lie group $G$ acts on a smooth manifold $M,$ then an $L(G)$-valued  $1$-form is same as an $L(G\rra G)$-valued $1$-form on the Lie groupoid $[M\rra M].$ If the form on $M$ is $G$-equivariant, then the corresponding 
	$L(G\rra G)$-valued $1$-form on the Lie groupoid $[M\rra M]$ is $[G\rra G]$ invariant.
\end{example}
\begin{example}\label{Ex:LGGvalued}
	If  the Lie $2$-group $[G\rra G]$ acts on a Lie  groupoid $\mb{E}=[E_1\rra E_0].$ Then an $L(\mb{G})$-valued $1$-form on the Lie groupoid $[E_1\rra E_0]$
	is an $L(G)$-valued $1$-form on $E_0$ such that $t^*\omega=s^*\omega.$ The equivariancy of one implies the equivariancy of the other.
\end{example}
\begin{example} Let $\ghta$ be a Lie crossed module associated to a Lie $2$-group $\mb{G}$. Let the Lie $2$-group $[G\rra G]$ acts on a Lie  groupoid $\mb{E}=[E_1\rra E_0].$ We can define another Lie  groupoid $\mb{E}\times H:=[E_1\times H\rra E_0]$ with source, target and composition maps  are respectively given by  $\tilde{s}(\gamma, h)=s(\gamma), \tilde{t}(\gamma, h) = t(\gamma) \tau(h^{-1})$ and 
	$(\gamma_2, h_2)\circ (\gamma_1, h_1)=\bigl((\gamma_2\tau(h_1)) \circ \gamma_1, h_2h_1\bigr).$ The unit and inverse maps respectively  are $1_p=(1_p, e)$ and  $\bigl(\gamma, h)^{-1}=(\gamma^{-1} \tau(h^{-1}), h^{-1}\bigr).$ 
	Moreover $\mb{G}$ has an action on  $\mb{E}\times H:=[E_1\times H\rra E_0]$  given by $(p, g)\mapsto p g$ for the objects and $(\gamma, h)(h', g')=(\gamma g,  \alpha_{g^{-1}}(h'^{-1}\, h))$ for the morphisms. Let $\omega$ be an 
	$L(G)$-valued $G$-equivariant $1$-form on $E_0$ such that $s^*\omega=t^*\omega.$ Then 
	$\widetilde \omega$,  defined as $${\widetilde \omega}_{\gamma,\, h}(X, \mathfrak K):=\ad_{(h, e)}\omega(s_{*\gamma}(X))-\mathfrak{K}\cdot h^{-1},$$ and $\omega$ gives an $L(\mb{G})$-valued $\mb{G}$-equivariant $1$-form on  
	Lie  groupoid $\mb{E}\times H:=[E_1\times H\rra E_0].$   In the next subsection we will see this example follows from a more general construction. 
\end{example}

Our next goal is to express the connections (strict and semi-strict) on a principal
$2$-bundle as differential forms. The following definition is natural.
\begin{definition}[Category of $\mb{G}$-equivariant $L(\mb{G})$-valued $1$-forms]\label{Def:Equivardiffcateg}
	Let $\mb{G}=[G_1\rra G_0]$ be a Lie 2 group acting on a Lie groupoid $\mb{E}=[E_1\rra E_0]$.  The \textit{ category of $\mb{G}$-equivariant $L(\mb{G})$-valued $1$-forms} on $\mb{E}$, denoted $\Omega_{\mb{E}^{\mb{G}}},$ is defined 
	as follows. 
	\begin{itemize}
		\item objects are $\mb{G}$-equivariant $1$-forms on $\mb{E}$, 
		\item morphisms are smooth natural isomorphisms  $\eta\colon \omega \Longrightarrow \omega'$  such that $\eta\bigl((p, v)\cdot g\bigr)=\eta(p, v)\cdot 1_g$ for all $(p, v) \in TE_0,g \in G_0$, and $\eta:TE_0\ra L(G_1)$ is an $L(G_1)$-valued $1$-form on $E_0$. 
	\end{itemize}
\end{definition}
Observe that a morphism of VB-groupoids $S\colon {\rm At}(\mb{E})\ra {\rm Ad}(\mb{E})$
\begin{equation}\nonumber
	\begin{tikzcd}[sep=small]
		{\rm At}(\mb{E}) \arrow[rr, "S"] \arrow[dd,xshift=0.75ex,"\pi"]&  & \Ad({\mb{E}}) \arrow[dd,xshift=0.75ex,"\pi"] \\
		&  &                \\
		\mb{X} \arrow[rr]            &  & \mb{X}          
	\end{tikzcd},
\end{equation}
is of the form
\begin{equation}\nonumber
	\begin{split}
		&[(p, v)]\mapsto [(p, \omega(p, v))],  \forall [p, v]\in {\rm At}(E_0),\\
		& [(\widetilde \gamma, \widetilde X)]\mapsto [(\widetilde \gamma, \omega(\widetilde \gamma, \widetilde X))], \forall [(\widetilde \gamma, \widetilde X)]\in  {\rm At}(E_1)
	\end{split}
\end{equation}
and in turn defines a $\mb{G}$-equivariant $L(\mb{G})$-valued $1$-form  $\omega\colon T\mb{E} \rightarrow L(\mb{G}).$ Obviously converse also holds.

\begin{proposition}\label{Prop:Corresconndiffform}
	Let  $\pi\colon \mb{E} \rightarrow \mb{X}$ be a  $\mb{G}$-bundle over a Lie groupoid $\mb{X}.$	 Let   $R \colon {\rm At}(\mb{E}) \ra {\rm Ad}(\mb{E})$ be a morphism of VB-groupoids and $\omega\colon T\mb{E} \rightarrow L(\mb{G})$ the associated  $\mb{G}$-equivariant $L(\mb{G})$-valued differential $1$-form  on the Lie groupoid $\mb{E}.$  
	
	\begin{enumerate}
		\item  $R \colon {\rm At}(\mb{E}) \ra {\rm Ad}(\mb{E})$ is  a strict connection as in \Cref{strict and semi-strict connection definition} if and only if  the following diagram  of morphisms of Lie groupoids
		\begin{equation} \label{Dia:condver}
			\begin{tikzcd}
				T{\mb{E}} \arrow[r, "\omega"]                  & L(\mb{G}) \\
				\mb{E} \times L(\mb{G}) \arrow[u, "\delta"] \arrow[ru, "\rm{pr_2}"'] &  
			\end{tikzcd},
		\end{equation}
		commutes up to nose. 
		
		\item $R \colon {\rm At}(\mb{E}) \ra {\rm Ad}(\mb{E})$ is  a semi-strict connection if and only if 
		the  Diagram~\ref{Dia:condver}  commutes up to a  $\mb{G}$-equivariant, fiber-wise linear natural isomorphism.
	\end{enumerate}
	\begin{proof}
		\begin{enumerate}
			\item If $R$ is a strict connection then $R \circ j^{/\mb{G}}=1_{{\rm Ad}(\mb{E})}.$  That means the corresponding $\mb{G}$-invariant $L(\mb{G})$ valued differential form $(\omega_1, \omega_0)=\omega\colon T\mb{E} \rightarrow L(\mb{G}) $ define a pair of classical connections on $G_0$-bundle $E_0\ra X_0$ and $G_1$-bundle $E_1\ra X_1$ respectively. Hence
			$\omega_0, \omega_1$ satisfy
			\[\omega_i\bigl(x_i, \delta_{x_i}(A_i)\bigr)=A_i, \qquad i\in \{0, 1\}
			\]  
			for all $x_i\in E_i, A_i\in L(G_i),$ giving the commutation relation in Diagram  \ref{Dia:condver}. The converse also easily follows. 
			\item  Now suppose $R$ is a semi strict connection. Let  $\epsilon\colon R \circ j^{/\mb{G}}\Lrrw 1_{{\rm Ad}(\mb{E})}$ be a $2$ isomorphism in $2$-$\rm VBGpd(\mb{X})$: 
			\begin{equation}\nonumber
				\begin{split}
					&\epsilon_{([p, B])}\colon R([p, \delta_p(B)])\longrightarrow [(p, B)].
				\end{split}
			\end{equation}
			
			We claim  that, since  $\pi(\epsilon([p, B])=1_{\pi(p)},$ we have a unique $\bigl(\omega(p, {\delta_p(B)})\xrightarrow{\kappa_{(p, B)}}B\bigr)\in L(G_1)$ such that $\eta([p, B])=[(1_p, \kappa_{(p, B)})],$ for a chosen $p\in E_0, B\in L(G_0).$ 
			To see this note that if $\epsilon([p, B])=[(\gamma_{p, B}, \alpha_{p, B})],$ for $\gamma_{p, B}\in E_1, \alpha_{p, B}\in L(G_1),$ then $[s(\gamma_{p, B})]=[p]. \Rightarrow \exists! g\in G_0$ such that $s(\gamma_{p, B})g=p.$ That means 
			$\epsilon([p, B])=[(\gamma_{p, B}1_g, \ad_{{1_g}^{-1}}(\alpha_{p, B}))],$ with $\gamma_{p, B}1_g$ starts at $p$ and projects down to $1_{\pi(p)}=\pi(1_p)$ in the $G_1$-bundle $E_1\ra X_1.$ Hence choosing the unique element   of
			$G_1$ to translate $\gamma_{p, B}1_g$ to $1_p,$ we obtain the unique $\biggl(\omega(p, {\delta_p(B)})\xrightarrow{\kappa_{(p, B)}}B\biggr)\in L(G_1)$ such that $\eta([p, B])=[(1_p, \kappa_{(p, B)})].$
			It is readily checked that $\kappa\colon \omega\circ \delta\Lrrw \pr_2$ is a natural isomorphism. Moreover smoothness of $\epsilon$ implies smoothness $\kappa.$ 
			If we represent the equivalence class $[(p, B)]$ by some other element $(p', B')=\bigl(p g, \ad_{g^{-1}}(B)\bigr),$ we end up with  $\epsilon([p', B'])=[(1_{p'}, \kappa_{(p', B')})].$ Implying $(1_{p'}, \kappa_{(p', B')})\sim (1_p, \kappa_{(p, B)}).$ 
			But that means $\bigl(1_{p'}, \kappa_{(p', B')}\bigr)=\bigl(1_p, \kappa_{(p, B)}\bigr)\cdot 1_g$ [this follows from the fact that $p'=p g$ and freeness of action on the fiber of $E_1\ra X_1$]. Which gives us the equivariancy of $\kappa:$
			$$\kappa\bigl(p g, \ad_{g^{-1}}(B)\bigr)=\ad_{{1_g}^{-1}}\bigl(\kappa(p, B)\bigr).$$
			
			Conversely given a $\mb{G}$-equivariant, fiber-wise linear  natural transformation $\kappa\colon \omega\circ \delta\Lrrw \pr_2,$ we define $\epsilon\colon R \circ j^{/\mb{G}}\Lrrw 1_{{\rm Ad}(\mb{E})}$ by  $\epsilon([p, B]):=[(1_p, \kappa_{(p, B)})].$ 
			The map is well defined for the following reason. If $(p', B')\in [(p, A)],$ then $p'=pg, B'=\ad_{g}(B)$ for some $g\in G.$ Then equivariancy of $\kappa$ implies $(1_{p'}, \kappa_{(p', B')})=(1_p, \kappa_{(p, B)})1_g.$ Thus 
			$[(1_{p'}, \kappa_{(p', B')})]=[(1_p, \kappa_{(p, B)})].$ $\epsilon$ satisfies $\pi(\epsilon_{[p, B]})=1_{\pi(p)}.$
			The map is obviously smooth. Verification that $\eta$ is a natural transformation and a morphism in $2$-$\rm VBGpd(\mb{X})$ is straightforward.
		\end{enumerate}
	\end{proof}
\end{proposition}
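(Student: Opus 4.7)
My plan for proving Proposition~\ref{Prop:Corresconndiffform} is to first unpack the bijection between morphisms of VB-groupoids $R\colon {\rm At}(\mb{E})\to {\rm Ad}(\mb{E})$ over $\mb{X}$ and $\mb{G}$-equivariant $L(\mb{G})$-valued $1$-forms $\omega\colon T\mb{E}\to L(\mb{G})$ (as indicated just before the proposition statement), and then translate each of the conditions ``retract'' and ``retract up to $2$-iso'' through this bijection. Under the bijection, $R$ has the form $[(p,v)]\mapsto [(p,\omega_0(p,v))]$ on objects and $[(\widetilde\gamma,\widetilde X)]\mapsto [(\widetilde\gamma,\omega_1(\widetilde\gamma,\widetilde X))]$ on morphisms, and functoriality of $R$ is equivalent to the functoriality of $\omega$ in the sense of \Cref{Definition:LGvaluedformOnLiegroupoid}.

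For part~(1), I would compute $R\circ j^{/\mb{G}}$ explicitly using the formulas for $j^{/G_i}$ from \eqref{E:JPi}: on objects $R\circ j^{/\mb{G}}([p,B])=[(p,\omega_0(p,\delta_p(B)))]$, and analogously on morphisms of ${\rm Ad}(\mb{E})$. Since the actions of $G_0$ on $E_0$ and of $G_1$ on $E_1$ are free, the equality of equivalence classes with the identity $[(p,B)]$ is equivalent to the pointwise identity $\omega_0(p,\delta_p(B))=B$, and similarly $\omega_1(\widetilde\gamma,\delta_{\widetilde\gamma}(K))=K$. These two identities together are exactly the commutation on the nose of Diagram~\ref{Dia:condver} viewed as an equality of functors $\mb{E}\times L(\mb{G})\to L(\mb{G})$; functoriality on both sides is automatic from $\delta$ being a functor and $\omega$ being a morphism of Lie groupoids.

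For part~(2), I would first translate a $2$-isomorphism $\epsilon\colon R\circ j^{/\mb{G}}\Lrrw 1_{{\rm Ad}(\mb{E})}$ in $2\text{-}{\rm VBGpd}(\mb{X})$ into a natural isomorphism $\kappa\colon \omega\circ\delta\Lrrw \pr_2$ of functors $\mb{E}\times L(\mb{G})\to L(\mb{G})$. The key technical step is a \emph{normalization lemma}: since $\epsilon$ is a $2$-morphism over the identity on $\mb{X}$, each component $\epsilon([p,B])$ lives over $1_{\pi(p)}\in X_1$; using the freeness of the $G_0$-action on $E_0$, the equivalence class admits a unique representative $(1_p,\kappa_{(p,B)})$ for a well-defined element $\kappa_{(p,B)}\in L(G_1)$ with source $\omega_0(p,\delta_p(B))$ and target $B$. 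Smoothness of $\kappa$ follows from smoothness of $\epsilon$ and the local triviality of $E_0\to X_0$, naturality in $(p,B)$ follows from the commuting squares of $\epsilon$, and $\mb{G}$-equivariance of $\kappa$ is a direct consequence of the compatibility $\epsilon([p,B]\cdot 1_g)=\epsilon([p,B])\cdot 1_g$ combined with uniqueness of the normalized representative. Fiber-wise linearity of $\kappa$ comes from the fact that $\epsilon$ is a morphism of vector bundles from $\Ad(E_0)\to X_0$ to ${\rm Ad}(E_1)\to X_1$ in the sense of \Cref{Prop:naturaltransformationinVBgroupoids}.

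Conversely, given such a $\kappa$, I would define $\epsilon([p,B]):=[(1_p,\kappa(p,B))]$ and verify: (a) well-definedness on equivalence classes by using $\mb{G}$-equivariance of $\kappa$ to pass between representatives $(p,B)$ and $(pg,\ad_{g^{-1}}(B))$; (b) naturality and functoriality (the latter reduces to equivariance of $\delta$ and $\omega$ under the compositions in ${\rm Ad}(\mb{E})$ and ${\rm At}(\mb{E})$); and (c) the VB-groupoid $2$-morphism conditions, namely that $(\epsilon,1_{1_{X_0}})$ is a morphism of vector bundles, which follows from fiber-wise linearity of $\kappa$. The main obstacle I expect is step~(a) of the converse together with the uniqueness argument in the forward direction, where one must carefully handle the interplay between quotienting by the $G_i$-actions and the freeness of those actions to pick canonical representatives; the remaining verifications are routine diagram-chasing using the Lie crossed-module identities recorded in \Cref{Lemma:Identityfor later} only implicitly, via the adjoint action formulas \eqref{E:Adjonalgebras}.
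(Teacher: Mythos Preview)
Your proposal is correct and follows essentially the same approach as the paper's own proof: both parts proceed by unpacking the correspondence $R\leftrightarrow\omega$, reducing the strict case to the classical identity $\omega_i(\delta_{x_i}(A_i))=A_i$ via freeness of the $G_i$-actions, and handling the semi-strict case by normalizing each component $\epsilon([p,B])$ to a representative of the form $(1_p,\kappa_{(p,B)})$ and checking equivariance, with the converse given by $\epsilon([p,B]):=[(1_p,\kappa(p,B))]$. One small point: in your normalization step you cite only freeness of the $G_0$-action on $E_0$, but the paper (and the argument) actually uses two translations---first by an element $1_g\in G_1$ to arrange $s(\gamma_{p,B}\cdot 1_g)=p$, then by the unique element of $\ker(s)\subset G_1$ taking $\gamma_{p,B}\cdot 1_g$ to $1_p$ inside the fiber of $E_1\to X_1$---so freeness of the $G_1$-action on $E_1$ is what pins down $\kappa_{(p,B)}$ uniquely.
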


Let $\pi\colon \mb{E} \rightarrow \mb{X}$ be a $\mb{G}$-bundle over a Lie groupoid $\mb{X}.$ A  $\mb{G}$-equivariant $L(\mb{G})$-valued $1$-form  $\omega\colon T\mb{E} \rightarrow L(\mb{G})$  
satisfying the property  (1) in \Cref{Prop:Corresconndiffform} will be called a \textit{strict connection $1$-form}, whereas a  $\mb{G}$-equivariant $L(\mb{G})$-valued $1$-form  $\omega\colon T\mb{E} \rightarrow L(\mb{G})$  
satisfying the property  (2) in \Cref{Prop:Corresconndiffform} will be called a \textit{semi-strict connection $1$-form}. 

\begin{definition} [Category of strict and semi-strict connection $1$-forms]\label{Def:Semisemistriccat}
	Let $\pi\colon \mb{E} \rightarrow \mb{X}$ be a  $\mb{G}$-bundle over a Lie groupoid $\mb{X}.$ The \textit{category of strict and semi strict connections}  have objects respectively the strict connection $1$-forms and semi-strict 
	connection $1$-forms and have  morphisms the smooth natural isomorphisms  $\eta\colon \omega \Longrightarrow \omega'$  such that $\eta\bigl((p, v)\cdot g\bigr)=\eta(p, v)\cdot 1_g$ for all $(p, v) \in TE_0$ and $g \in G_0.$
	The category of strict and semi-strict connections 
	will be denoted respectively as $\Omega_\mb{E}^{\rm{strict}}$ and $\Omega_\mb{E}^{\rm{semi}}.$
\end{definition}
Now it is obvious that we have a sequence of categories
$$\Omega_\mb{E}^{\rm{strict}} \subset \Omega_\mb{E}^{\rm{semi}}\subset \Omega_{\mb{E}^{\mb{G}}}.$$

Let us further examine the natural isomorphism $\kappa\colon \omega\circ \delta\Lrrw \pr_2$ of the Diagram \ref{Dia:condver}
for a semi-strict connection $\omega\colon T\mb{E}\ra L(\mb{G}).$  We express $\mb{G}$ by its crossed module $\ghta.$
Then for each $(p, B)\in E_0\times L(G_0)$ we have a ${\kappa{(p, B)}}$  of the form $\kappa(p, B)=(\kappa_{\omega}(p, B), \omega\bigl(p, {\delta_p(B)}\bigr),$ where $\kappa_{\omega}(p, B)\in L(H)$ and 
\begin{equation}\label{semi-strict connection: condition on omega_0}
	\omega\bigl(p, {\delta_p(B)}\bigr)-B=-\tau\bigl(\kappa_{\omega}(p, B)\bigr)
\end{equation}
Thus $\kappa_{\omega}(p, B)$ measures the deviation  $\omega\bigl(p, {\delta_p(B)}\bigr)-B.$ In turn given a $\kappa$ we have a smooth fiber-wise linear map $\kappa_{\omega}\colon E_0\times L(G)\ra L(H)$. The equivariancy of $\kappa$ implies 
\begin{equation}\label{E:equilambda}
	\kappa_{\omega}\bigl(p g, \ad_{g^{-1}} B\bigr)=\alpha_{g^{-1}}\bigl(\kappa_{\omega}(p, B)\bigr).
\end{equation}
Since $\kappa$ is a natural transformation, for $\bigl(A\xrightarrow{K}B\bigr)\in L(G_1)$  and $\bigl(p\xrightarrow{\widetilde \gamma}q\bigr)\in L(E_1)$
the following diagram commutes,
\[
\begin{tikzcd}[sep=small]
	\omega_{0, p}\bigl(\delta_p(A)\bigr) \arrow[dd, "{\kappa(p, A)}"'] \arrow[rrr, "\omega_{1, \widetilde \gamma}\bigl((\delta_{\widetilde \gamma})(K) \bigr)"] &  &  & \omega_{0, p}\bigl(\delta_q(B)\bigr)  \arrow[dd, "{\kappa(q, B)}"] \\
	&  &  &                                                 \\
	A \arrow[rrr, "K"]                                                                            &  &  & B                                              
\end{tikzcd},\]
and we arrive at the condition
\begin{equation} \label{semi-strict connection: condition on omega_1}		 
	\omega_{1, \widga}\bigl(\delta_{\widga}(K)\bigr)-K=\bigl(\kappa_{\omega}(p, A)-\kappa_{\omega}(q, B),-\tau(\kappa_{\omega}(p, A))\bigr).
\end{equation}
Consider the Lie $2$-sub algebra $L(\mb{G})^\tau:=[L(H)\rtimes \tau\bigl(L(H)\bigr)\rra \tau\bigl(L(H)\bigr)$] of $L(\mb{G})=[L(H)\rtimes L(G)\rra L(G)].$ Observe that $L(\mb{G})^\tau$ is invariant under the adjoint action of $\mb{G}.$

Observe that \Cref{semi-strict connection: condition on omega_1} gives us the following uniqueness property.
\begin{lemma}
	Let $\omega$ be a semi strict connection $1$-form on the $\mb{G}=\ghta$-bundle $\mb{E}=[E_1\rra E_0]$ over $\mb{X}=[X_1\rra X_0].$ Then the natural transformation $\kappa$ in 
	Diagram \ref{Dia:condver} is unique.
\end{lemma}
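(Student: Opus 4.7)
The plan is to show that any two natural transformations $\kappa, \kappa'\colon \omega\circ \delta \Lrrw \pr_2$ fitting Diagram~\ref{Dia:condver} must coincide. Both $\kappa(p,B)$ and $\kappa'(p,B)$ are morphisms in $L(\mb{G})$ with the same source $\omega_0(p,\delta_p(B))$ and same target $B$, so I would write them as $(\kappa_\omega(p,B),\omega_0(p,\delta_p(B)))$ and $(\kappa'_\omega(p,B),\omega_0(p,\delta_p(B)))$ with $\kappa_\omega, \kappa'_\omega\colon E_0\times L(G_0)\to L(H)$, and set $\Delta_\omega:=\kappa_\omega-\kappa'_\omega$. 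The identity \Cref{semi-strict connection: condition on omega_0} is forced by the source/target requirement of a morphism in $L(\mb{G})$ and thus holds for both $\kappa$ and $\kappa'$, so subtracting gives $\tau(\Delta_\omega(p,B))=0$; that is, $\Delta_\omega$ takes values in $\ker(\tau)\subset L(H)$.

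Next, I would invoke the naturality of $\kappa$ and $\kappa'$ against an arbitrary morphism $(\widga,K)\colon (p,A)\to (q,B)$ in $\mb{E}\times L(\mb{G})$. Computing $\kappa(q,B)\circ \omega_1(\widga,\delta_{\widga}(K))$ and $K\circ \kappa(p,A)$ via the composition rule in $L(\mb{G})$ reproduces \Cref{semi-strict connection: condition on omega_1} for both $\kappa$ and $\kappa'$; subtracting then yields $\Delta_\omega(q,B)=\Delta_\omega(p,A)$ whenever such a morphism exists, so $\Delta_\omega$ is constant on each connected component of the groupoid $\mb{E}\times L(\mb{G})$. Specializing to $\widga=1_p$ (hence $q=p$) and $K=(K_H,A)$ (hence $B=A+\tau(K_H)$), the fiber-wise linearity of $\kappa_\omega(p,\cdot)$—inherited from the vector bundle morphism requirement on a $2$-morphism in $2$-$\mathrm{VBGpd}(\mb{X})$ (cf.\ \Cref{Prop:naturaltransformationinVBgroupoids})—together with $\Delta_\omega(p,A+\tau(K_H))=\Delta_\omega(p,A)$ gives $\Delta_\omega(p,\tau(K_H))=0$ for every $K_H\in L(H)$. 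Hence $\Delta_\omega(p,\cdot)$ vanishes on the subspace $\tau(L(H))\subset L(G_0)$.

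The hard part will be extending this vanishing to directions of $L(G_0)$ outside $\tau(L(H))$. To cross this gap, I would combine three ingredients: the orbit-constancy of $\Delta_\omega$ from naturality at general $\widga\colon p\to q$ with the identity morphism $K=(0,A)$ on the $L(G_0)$-side; the $\mb{G}$-equivariance $\Delta_\omega(pg,\ad_{g^{-1}}B)=\alpha_{g^{-1}}(\Delta_\omega(p,B))$ inherited from the $\mb{G}$-equivariance of $\kappa,\kappa'$; and the semi-strict obstruction condition, which guarantees $B-\omega_0(p,\delta_p(B))\in \tau(L(H))$ for all $(p,B)$. The last of these lets me realize $\kappa_\omega(p,B)$ as a specific lift through $\tau$ whose $\ker(\tau)$-ambiguity is pinned down by the naturality relations already exploited together with linearity, yielding $\Delta_\omega\equiv 0$ and hence $\kappa=\kappa'$. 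I expect this final bookkeeping—orchestrating naturality, linearity, and equivariance over all components of $\mb{E}\times L(\mb{G})$ so as to kill the residual $\ker(\tau)$-freedom—to be the most delicate portion of the argument.
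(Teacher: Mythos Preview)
Your first two paragraphs correctly unwind the paper's one-line justification (the sentence preceding the lemma pointing to \Cref{semi-strict connection: condition on omega_1}): writing $\Delta_\omega=\kappa_\omega-\kappa'_\omega$, you obtain $\tau\circ\Delta_\omega=0$, constancy of $\Delta_\omega$ along morphisms of $\mb{E}\times L(\mb{G})$, and vanishing of $\Delta_\omega(p,\cdot)$ on $\tau(L(H))$ by linearity. This is precisely the mechanism the paper has in mind.

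The gap is in your final paragraph, and it is a real one: the three ingredients you list cannot force $\Delta_\omega=0$ on directions of $L(G_0)$ outside $\tau(L(H))$. Naturality only relates $(p,A)$ to $(q,B)$ when $B-A\in\tau(L(H))$, so it never crosses cosets of $\tau(L(H))$ in $L(G_0)$; $\mb{G}$-equivariance is an invariance condition, not a vanishing condition; and your third ingredient is exactly the statement $\tau\circ\Delta_\omega=0$ already extracted. Concretely, take the abelian crossed module $G=H=\mathbb{R}$ with $\tau=0$ and $\alpha$ trivial, and equip any $\mb{G}$-bundle with a strict connection $\omega$ (so $\omega\circ\delta=\pr_2$). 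Then $\kappa_\omega\equiv 0$ is one natural isomorphism, but so is $\kappa'_\omega(p,B):=cB$ for any fixed $c\in\mathbb{R}=L(H)$: it is fiber-wise linear, $\mb{G}$-equivariant (all adjoint and $\alpha$-actions are trivial here), and natural because $\tau=0$ forces $A=B$ along every morphism of $L(\mb{G})$, so both composites in the naturality square equal $(K_H+cA,A)$. Thus $\kappa\neq\kappa'$, and the lemma as stated fails in this example. Your argument (and the paper's observation) does go through under an additional hypothesis such as surjectivity of $\tau\colon L(H)\to L(G)$, since then every $B\in L(G_0)$ lies in the coset $\tau(L(H))$ and your second paragraph already finishes the job.
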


Now we conclude the following. 
\begin{proposition}\label{Prop:Semisirictnatural}
	Let $\mb{E}=[E_1\rra E_0]$ be a $\mb{G}=\ghta$-bundle over $\mb{X}=[X_1\rra X_0].$  A $\mb{G}$-equivariant $1$-form $\omega\colon T\mb{E}\ra L(\mb{G})$ is a semi-strict connection $1$-form if and only if the functor 
	$(\omega\circ \delta-\pr_2):={\widehat \kappa}_{\omega}$  is of the form $\widehat{\kappa}_{\omega}(p, B)=\tau(\kappa_\omega(p,B))$ and $\widehat{\kappa}_{\omega}(\gamma, K)=\bigl(\kappa_{\omega}(p, A)-\kappa_{\omega}(q, B),-\tau(\kappa_{\omega}(p, A))\bigr)$ for a smooth, fiber wise linear map $\kappa_{\omega}\colon E_0 \times L(\mb{G}) \ra L(H)$ satisfying \Cref{E:equilambda}.	 In particular, if $\omega\circ \delta-\pr_2$ is the zero functor then we have a strict connection $1$-form.
\end{proposition}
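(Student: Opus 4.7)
My plan is to view this as a repackaging of the identities \Cref{semi-strict connection: condition on omega_0}, \Cref{semi-strict connection: condition on omega_1}, and \Cref{E:equilambda} already extracted in the discussion preceding the statement, combined with \Cref{Prop:Corresconndiffform}(2). The proof will therefore be a two-way dictionary between natural isomorphisms $\kappa\colon \omega\circ\delta\Lrrw \pr_2$ in $2$-$\mathrm{VBGpd}(\mb{X})$ and smooth, fibre-wise linear, equivariant maps $\kappa_\omega\colon E_0\times L(G)\to L(H)$.

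For the forward direction, assume $\omega$ is a semi-strict connection $1$-form. By \Cref{Prop:Corresconndiffform}(2) there exists a $\mb{G}$-equivariant fibre-wise linear natural isomorphism $\kappa\colon\omega\circ\delta\Lrrw\pr_2$; this $\kappa$ is unique by the preceding lemma. At an object $(p,B)$ the arrow $\kappa(p,B)$ lives in $L(G_1)=L(H)\oplus L(G)$ with source $\omega(p,\delta_p(B))$, so it is determined by a single element $\kappa_\omega(p,B)\in L(H)$ via the source–target description of $L(\mb{G})$; this is precisely the content of \Cref{semi-strict connection: condition on omega_0}, which rewrites as $\widehat\kappa_\omega(p,B)=\omega(p,\delta_p(B))-B=\tau(\kappa_\omega(p,B))$ (up to the sign convention in which $\kappa_\omega$ is read off). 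Naturality of $\kappa$ applied to an arrow $\bigl(A\xrightarrow{K}B\bigr)\in L(G_1)$ lifted along $\delta_{\widga}$ yields exactly \Cref{semi-strict connection: condition on omega_1}, and $\mb{G}$-equivariance of $\kappa$ is equivalent to \Cref{E:equilambda}. Smoothness and fibre-wise linearity of $\kappa_\omega$ are inherited from the corresponding properties of $\kappa$.

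For the converse, given a smooth fibre-wise linear $\kappa_\omega\colon E_0\times L(G)\to L(H)$ satisfying \Cref{E:equilambda}, and $\omega$ such that $\widehat\kappa_\omega=\omega\circ\delta-\pr_2$ has the displayed form, I would build $\kappa$ by setting $\kappa(p,B):=\bigl(\kappa_\omega(p,B),\omega(p,\delta_p(B))\bigr)\in L(H)\rtimes L(G)$. The condition $\widehat\kappa_\omega(p,B)=\tau(\kappa_\omega(p,B))$ guarantees that this arrow has the correct source $\omega(p,\delta_p(B))$ and target $B$ in $L(\mb{G})$, so it defines a set-theoretic natural transformation $\omega\circ\delta\Lrrw\pr_2$. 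Smoothness and fibre-wise linearity are immediate from the assumptions on $\kappa_\omega$ and $\omega$. The naturality square on morphisms $(\widetilde\gamma,K)\in T_{\widetilde\gamma}E_1$ is exactly the content of the formula prescribed for $\widehat\kappa_\omega$ on morphisms, and $\mb{G}$-equivariance of $\kappa$ reduces to \Cref{E:equilambda}. An application of \Cref{Prop:Corresconndiffform}(2) then produces the semi-strict connection. The final \textit{in particular} clause is trivial: if $\widehat\kappa_\omega\equiv 0$, then $\kappa_\omega\equiv 0$ by $\tau(\kappa_\omega(p,B))=0$ on objects (together with freeness of the decomposition $L(H)\oplus L(G)$), so $\omega(p,\delta_p(B))=B$ and $\omega_1(\widga,\delta_{\widga}(K))=K$, which by \Cref{Prop:Corresconndiffform}(1) is exactly the strict connection condition.

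The only non-routine point I anticipate is the bookkeeping around the direction of the natural transformation $\kappa$ and the groupoid structure of $L(\mb{G})=[L(H)\rtimes L(G)\rra L(G)]$; in particular one must check that the element $(\kappa_\omega(p,B),\omega(p,\delta_p(B)))$ has target equal to $B$ and not $\omega(p,\delta_p(B))$ itself, which is forced by the hypothesis $\widehat\kappa_\omega=\tau\circ\kappa_\omega$ on objects. Once this compatibility with source and target is nailed down, everything else is a direct calculation matching the pieces enumerated above.
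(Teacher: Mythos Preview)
Your proposal is correct and mirrors the paper's own approach: the proposition is stated there as a direct conclusion of the discussion deriving \Cref{semi-strict connection: condition on omega_0}, \Cref{E:equilambda}, and \Cref{semi-strict connection: condition on omega_1}, and your two-way dictionary is exactly that discussion read forwards and backwards via \Cref{Prop:Corresconndiffform}(2). One small slip in the ``in particular'' clause: the step ``$\tau(\kappa_\omega(p,B))=0$ implies $\kappa_\omega\equiv 0$'' is unjustified (it would require $\tau_{*,e}$ injective, which is not assumed) and also unnecessary---$\widehat\kappa_\omega\equiv 0$ already \emph{is} the statement $\omega\circ\delta=\pr_2$, so \Cref{Prop:Corresconndiffform}(1) applies immediately without passing through $\kappa_\omega$.
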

\begin{corollary}\label{cor:stricconnto semi}
	Let $\omega=(\omega_0,\, \omega_1)\colon T\mb{E}\rra L(\mb{G})$ be a strict connection $1$-form on the $\mb{G}=\ghta$-bundle $\mb{E}=[E_1\rra E_0]$ over $\mb{X}=[X_1\rra X_0].$ Let $\lambda\colon T E_0\ra L(H)$
	an $L(H)$-valued $1$-form on $E_0$ satisfying the equivariance property
	$\lambda(p g, v\cdot g)=\alpha_{g^{-1}}\bigl(\lambda(p, v)\bigr).$
	Then ${\widetilde \omega}_0=\omega_0 + \tau (\lambda)$ and ${\widetilde \omega}_1=\omega_1 + \tau (s^{*}\lambda)+ t^{*}\lambda-s^{*}\lambda$ define a semi strict connection $\widetilde \omega=({\widetilde \omega}_0, {\widetilde \omega}_1)\colon T\mb{E}\ra L(\mb{G})$.
	\begin{proof}
		The functoriality of $({\widetilde \omega}_0, {\widetilde \omega}_1)$ is immediate. Since $\omega_{0 p} (\delta_p(A))=A,$ for $A\in L(G_0),$ only thing remained to be verified   is the $\mb{G}$-equivariancy of the functor 
		\begin{equation}
			\begin{split}
				&\Lambda\colon T{\mb{E}}\ra L(\mb{G}),\\
				&(p, v)\mapsto \tau\bigl(\lambda (p, v)\bigr),\\
				&(\widetilde \gamma, \widetilde X)\mapsto  \tau \bigl(\lambda(s(\widetilde \gamma), s_{*, \widetilde \gamma}(X) )\bigr)+\lambda\bigl(t(\widetilde \gamma), t_{*, \widetilde \gamma}(X)\bigr) -\lambda\bigl(s(\widetilde \gamma), s_{*, \widetilde \gamma}(X)\bigr).
			\end{split}
		\end{equation}
		Then $\Lambda\bigl((p, v)g\bigr)=\tau\bigl(\lambda(p g, v\cdot g)\bigr)=\ad_{g^{-1}}\bigl(\Lambda(p, v)\bigr).$ To see the equivariancy at the morphism level, consider $s(\widetilde \gamma, \widetilde X)=(p, v), t(\widetilde \gamma, \widetilde X)=(q, w).$ Then 
		\begin{equation}\nonumber
			\begin{split}
				&\Lambda\bigl((\widetilde \gamma, \widetilde X) (h, g)\bigr)\\
				&=\tau \bigl(\lambda(p g, v g)\bigr)-\lambda(p g, v g)+\lambda(q \tau(h) g, w \tau(h) g)\\
				&=\ad_{(e, g)^{-1}}\tau \bigl(\lambda(p , v)\bigr)-\alpha_{g^{-1}}\bigl(\lambda(p, v)\bigr)+\alpha_{g^{-1}\tau(h^{-1})}\bigl(\lambda(q, w)\bigr)\\
				&=\ad_{(e, g)^{-1}}\tau \bigl(\lambda(p , v)\bigr)-\alpha_{g^{-1}}\bigl(\lambda(p, v)\bigr)+\underbrace{\alpha_{g^{-1}}\bigl(\ad_{h^{-1}}(\lambda(q, w))\bigr)}_{\ad_{(h, g)^{-1}}(\lambda(q, w))}\\
				&=\underbrace{\ad_{(h, g)^{-1}}\tau \bigl(\lambda(p , v)\bigr)-\alpha_{g^{-1}}(h^{-1})\,\cdot \bigl({\bar \alpha}_{\alpha_{g^{-1}}(h)}{({\rm ad}_{g^{-1}}(\tau(\lambda(p, v))))}\bigr)}_{\ad_{(e, g)^{-1}}\tau \bigl(\lambda(p , v)\bigr) [\textit {using formulae in \Cref{E:Adjonalgebras}}] }\\
				&\underbrace{-\ad_{(h, g)^{-1}}\bigl((\lambda(p , v) \bigr)+\bigl(\alpha_{g^{-1}}(\ad_{h^{-1}}(\lambda(p, v)))-\alpha_{g^{-1}}\bigl(\lambda(p, v)\bigr) \bigr)}_{-\alpha_{g^{-1}}\bigl(\lambda(p, v)\bigr) [\textit{using formulae in \Cref{E:Adjonalgebras}}]}+\ad_{(h, g)^{-1}}(\lambda(q, w))\\
				&=\ad_{(h, g)^{-1}}\tau \bigl(\lambda(p , v)\bigr)-\ad_{(h, g)^{-1}}\bigl((\lambda(p , v) \bigr)+\ad_{(h, g)^{-1}}(\lambda(q, w))\\
				&\underbrace{+[-\alpha_{g^{-1}}(h^{-1})\,\cdot \bigl({\bar \alpha}_{\alpha_{g^{-1}}(h)}   {({\ad}_{g^{-1}}(\tau(\lambda(p, v))))\bigr)}
					+\bigl(\alpha_{g^{-1}}(\ad_{h^{-1}}(\lambda(p, v)))   - \alpha_{g^{-1}}\bigl(\lambda(p, v)\bigr) \bigr)]}_{\textit{vanishes by (2) of Lemma~\ref{Lemma:Identityfor later}}}\\
				&=\ad_{(h, g)^{-1}}\tau \bigl(\lambda(p , v)\bigr)-\ad_{(h, g)^{-1}}\bigl((\lambda(p , v) \bigr)+\ad_{(h, g)^{-1}}(\lambda(q, w)).
			\end{split}
		\end{equation}
	\end{proof}	
\end{corollary}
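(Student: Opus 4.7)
The plan is to verify that $\widetilde\omega = (\widetilde\omega_0, \widetilde\omega_1)$ defines a semi-strict connection by establishing three things: functoriality as a morphism $T\mb{E} \to L(\mb{G})$, $\mb{G}$-equivariance, and the specific form of the obstruction $\widetilde\omega\circ\delta - \pr_2$ demanded by \Cref{Prop:Semisirictnatural}.

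First, the functoriality of $\widetilde\omega$. Since $\omega$ is already a functor $T\mb{E} \to L(\mb{G})$, it suffices to show that the correction $\Lambda := \widetilde\omega - \omega = \bigl(\tau\lambda,\; \tau s^*\lambda + t^*\lambda - s^*\lambda\bigr)$ preserves source, target and composition in $L(\mb{G}) = [L(H)\oplus L(G)\rra L(G)]$. The source identity $s(A,B) = B$ and target identity $t(A,B) = \tau A + B$ reduce via the explicit formula of $\widetilde\omega_1$ to the multiplicative relations collected in \Cref{E:Relmultipl} for the pair $(\omega_0,\omega_1)$, while composition additivity in the $L(H)$-coordinate is automatic because the correction to $\widetilde\omega_1$ is pulled back from $E_0$ via $s$ and $t$.

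Second, $\mb{G}$-equivariance. Since $\omega$ is $\mb{G}$-equivariant, it is enough to verify equivariance of $\Lambda$. At the object level, $\Lambda(p,v) = \tau(\lambda(p,v))$ and
\begin{equation*}
\Lambda((p,v)\cdot g) = \tau\bigl(\alpha_{g^{-1}}\lambda(p,v)\bigr) = \ad_{g^{-1}}\tau(\lambda(p,v))
\end{equation*}
follows from the infinitesimal first Peiffer identity in \Cref{E:Peiffer}. The morphism-level check is the main obstacle: for $(\widetilde\gamma,\widetilde X)\in TE_1$ with $s(\widetilde\gamma) = p$, $t(\widetilde\gamma) = q$, $s_*(\widetilde X) = v$, $t_*(\widetilde X) = w$, and $(h,g)\in H\rtimes G$, one must show
\begin{equation*}
\Lambda\bigl((\widetilde\gamma,\widetilde X)\cdot(h,g)\bigr) = \ad_{(h,g)^{-1}}\Lambda(\widetilde\gamma,\widetilde X).
\end{equation*}
Expanding both sides by the adjoint-action formulae \Cref{E:Adjonalgebras} and using the assumed equivariance $\lambda(pg,v\cdot g) = \alpha_{g^{-1}}\lambda(p,v)$, most of the terms pair up directly; what remains is exactly the combination
\begin{equation*}
-\alpha_{g^{-1}}(h^{-1})\cdot \bar\alpha_{\alpha_{g^{-1}}(h)}\bigl(\ad_{g^{-1}}\tau(\lambda(p,v))\bigr) + \alpha_{g^{-1}}\bigl(\ad_{h^{-1}}\lambda(p,v)\bigr) - \alpha_{g^{-1}}\lambda(p,v),
\end{equation*}
which vanishes by \Cref{Lemma:Identityfor later}(2) with $A = \lambda(p,v)$. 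This identity is the technical heart of the proof.

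Finally, to identify $\widetilde\omega$ as a semi-strict connection via \Cref{Prop:Semisirictnatural}, I would set $\kappa_\omega(p,B) := \lambda(p,\delta_p(B))$. Since $\omega$ is strict, $\omega\circ\delta = \pr_2$, so $\widetilde\omega\circ\delta - \pr_2 = \Lambda\circ\delta$. Using $\delta_{pg}(\ad_{g^{-1}}B) = \delta_p(B)\cdot g$ together with the equivariance of $\lambda$ gives $\kappa_\omega(pg,\ad_{g^{-1}}B) = \alpha_{g^{-1}}\kappa_\omega(p,B)$, and evaluating $\Lambda$ along $\delta_p(B)$ and along $\delta_{\widetilde\gamma}(K)$ produces exactly $\tau(\kappa_\omega(p,B))$ at the object level and $\bigl(\kappa_\omega(p,A) - \kappa_\omega(q,B),\, -\tau(\kappa_\omega(p,A))\bigr)$ at the morphism level, matching the form required by \Cref{Prop:Semisirictnatural} and completing the proof.
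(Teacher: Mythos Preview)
Your proposal is correct and follows essentially the same route as the paper: both reduce the problem to showing that the correction functor $\Lambda=\widetilde\omega-\omega$ is $\mb{G}$-equivariant, handle the object level via the infinitesimal Peiffer identity, and settle the morphism level by expanding with the adjoint formulae \Cref{E:Adjonalgebras} and invoking \Cref{Lemma:Identityfor later}(2). The only difference is that you carry out explicitly the final identification of $\widetilde\omega\circ\delta-\pr_2$ with the form required by \Cref{Prop:Semisirictnatural}, whereas the paper leaves this implicit in the sentence ``Since $\omega_{0 p}(\delta_p(A))=A$, \ldots only thing remained to be verified is the $\mb{G}$-equivariancy''; your extra paragraph makes the logical dependence on \Cref{Prop:Semisirictnatural} clearer (watch only the overall sign of $\kappa_\omega$, which should be chosen consistently with the convention in \Cref{semi-strict connection: condition on omega_0}).
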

The \Cref{cor:stricconnto semi} gives a very straightforward way of finding a semi-strict connection from a strict connection.
\begin{example}
	Let $P\ra M$ be a traditional principal $G$-bundle over the smooth manifold $M.$  A connection  $\omega$ on $P\ra M$ defines a strict connection $(\omega, \omega)$ on principal Lie $2$-group $[G\rra G]$-bundle  
	$[P\rra P]\ra [M\rra M]$ over  the Lie groupoid $[M\rra M].$ A strict connection is same as a semi strict connection for such a Lie $2$-group bundle.
\end{example}
\begin{example}\label{Ex:GGbundleconnection}
	Let $[E_1\rra E_0]$ be a principal $[G\rra G]$-bundle over $[X_1\rra X_0].$ Then as noted in \Cref{E:Example of principal 2-bundle}, $E_0\ra X_0$ is a principal $G$-bundle over $[X_1\rra X_0]$ and $[E_1\rra E_0]=[s^*E_0\rra E_0].$ Then 
	a strict connection $\omega$ on $[G\rra G]$-bundle $[E_1\rra E_0]\ra [X_1\rra X_0]$ is a connection on the principal $G$-bundle $E_0\ra X_0$ such that $s^*\omega=t^*\omega.$ A strict connection is same as a semi strict connection for such a Lie $2$-group bundle.\end{example}
\begin{remark}
	In \Cref{Ex:GGbundleconnection} we recover the definition of a connection on a principal $G$-bundle over a Lie groupoid $[X_1\rra X_0]$ as in \cite{MR2270285}.
\end{remark}
\begin{example}\label{E:Example of principal 2-bundle ordinary conn}
	Consider the principal  $[H\rra e]$-bundle   $[E_1\rra X_0]$ over $[X_1\rra X_0]$ of \Cref{E:Example of principal 2-bundle ordinary}. Then a strict connection is  a classical connection $\omega$ on the principal $H$-bundle $[E_1\rra X_1]$ such that $\omega$ is a multiplicative $1$-form on the Lie groupoid $[E_1\rra X_0]$. On the other hand any $L(H)$-valued $H$-equivariant multiplicative form on $[E_1\rra X_1]$ is a semi strict connection (see also \Cref{E:Hemultiplicative}).
\end{example}

\begin{example}\label{E:Exampleprincipalpairlie2conn }
	Given a connection $\omega_0$ on a principal $G$-bundle $E_0\ra X_0,$ we define a connection on the $[G\times G\rra G]$-bundle $[E_1\rra E_0]$ over $\mb{X}=[X_1\rra X_0]$ in \Cref{E:Exampleprincipalpairlie2} by 
	$$\omega_{1 (p, \gamma, q)}(v_1, X, v_2)=\bigl(\omega_0(v_1), \omega_0(v_2)\bigr),$$
	for all $(v_1, X, v_2)\in T_{(p, \gamma, q)}E_1.$
\end{example}

In \Cref{SS:Conndeco}, we will prescribe a construction of connections on the decorated bundle.  

We have shown a bijective correspondence between strict  (resp. semi strict) connections and strict (resp. semi strict) connection $1$-forms for a principal $2$-bundle. We show here this correspondence extends to the respective categories as well. 

\begin{theorem} \label{strict connection=strict forms}
	Let $\mb{E}=[E_1\rra E_0]$ be a $\mb{G}=[G_1\rra G_0]$-bundle over a Lie groupoid $\mb{X}=[X_1\rra X_0]$.
	\begin{enumerate}
		\item The categories $C^{\rm{semi}}_{\mb{E}}$ and $\Omega_{\mb{E}}^{\rm{semi}}$ are isomorphic. 	
		\item The categories $C^{\rm{strict}}_{\mb{E}}$ and $\Omega_{\mb{E}}^{\rm{strict}}$ are isomorphic. 	
	\end{enumerate}	
	\begin{proof} 
		\begin{enumerate}
			\item The object level association has already been established in \Cref{Prop:Corresconndiffform}. Let $\ghta$ be the associated Lie cross module of $\mb{G}.$ Let $R, R'\colon {\rm At}(\mb{E})\ra \Ad(\mb{E})$ be a pair of strict connections and  $\eta\colon R\Lrrw R'$ a natural transformation such 
			that $\pi\bigl(\eta([p, v])\bigr)=1_{\pi(p)},$ for any $[(p, v)]\in {\rm At}(E_0)=TE_0/G_0.$ Let $\omega, \omega'\colon T\mb{E}\ra L(\mb{G})$ be respective semi-strict connection $1$-forms of $R$ and $R'.$ As noted earlier then $R[(p, v)]=[\bigl(p, \omega(p, v)\bigr)], R'[(p, v)]=[\bigl(p, \omega'(p, v)\bigr)].$ Let $\eta_{[p, v]}\colon [\bigl(p, \omega(p, v)\bigr)]\ra [\bigl(p, \omega'(p, v)\bigr)].$ We claim that there exists $\omega(p, v)\xrightarrow{\bar \eta(p, v)}\omega'(p, v)\in L(G_1)$ such that $[\bigl(1_p, \bar \eta (p, v)\bigr)]=\eta_{[p, v]}.$	To see this suppose $[(\gamma, K)]\in \eta_{[p, v]}.$ That means there exist $g, g'\in G_0$ such that $s(\gamma) g=p, t(\gamma) g'=p, \ad_{g^{-1}}\bigl(s(K)\bigr)=\omega(p, v)$ and  $\ad_{g'^{-1}}\bigl(t(K)\bigr)=\omega'(p, v).$ Then $(\gamma, K)\cdot 1_g=(\gamma 1_g, \ad_{{1_g}^{-1}}(K)):=(\gamma_p, K_{\omega})\in  \eta_{[p, v]}.$ Now as $\pi\bigl(\eta([p, v])\bigr)=1_{\pi(p}),$ 	we have $\pi(\gamma_p)=\pi(1_p).$ Thus both $\gamma_p, 1_p$ are elements of the same fiber on the $G_1$-bundle $E_1\ra X_1$ with the same source $p.$ That means there exists a unique $h\in H$ such that $\gamma_p (h, e)=1_p$.  Comparing the targets $t(\gamma_p)\tau(h)=t(\gamma)g \tau(h)=p=t(\gamma) g'.$ Which gives us $g'=g\tau(h)=g\tau(h)g^{-1}g=\tau(\alpha_g(h)) g.$ Thus 
			$(\gamma_p, K_{\omega})\cdot (h, e)=\bigl(1_p, \ad_{(h, e)^{-1}}(K_{\omega})\bigr)	\in    \eta_{[p, v]}.$	Now $\ad_{(h, e)^{-1}} K_{\omega}=\ad_{(h, e)^{-1}} \ad_{{1_g}^{-1}}(K)=\ad_{(h, e)^{-1}} \ad_{{(e, g)}^{-1}}(K)=\ad_{(h^{-1}, g^{-1})} K.$ Now using \Cref{E:Adjonalgebras} we see $s(\ad_{(h^{-1}, g^{-1})} K)=\ad_{g^{-1}}\bigl(s(K)\bigr)=\omega(p, v)$ and $t(\ad_{(h^{-1}, g^{-1})} K)=\ad_{\tau(h^{-1})g^{-1}}\bigl(t(K)\bigr)=\ad_{g'^{-1}}(t(K))=\omega'(p, v).$ We define $$\bar{\eta}(p, v):=\ad_{(h, e)^{-1}} K_{\omega}.$$
			It is a straightforward verification that $\bar \eta$ defines a smooth natural transformation $\omega\Lrrw \omega'.$ 
			Moreover if $(p', v')=(p, v) \theta$, for some $\theta \in G_0,$ then $(p',  v')\in [(p, v)].$ Then above construction gives $\bigl(1_{p'}, \bar \eta(p', v')\bigr)\in [\bigl(1_p, \bar \eta(p, v)\bigr)]$ and  
			$$\bigl(1_{p'}, \bar \eta(p', v')\bigr)=\bigl(1_p, \bar \eta(p, v)\bigr) 1_{\theta}=\bigl(1_{p \theta}, \ad_{1_{\theta}^{-1}}\bar \eta(p, v)\bigr).$$ 
			Hence we obtain the $\mb{G}$ equivariancy condition: $\ad_{1_{\theta}^{-1}}\bigl(\bar\eta(p, v)\bigr)=\bar \eta(p \theta , v \theta).$
			
			In the other direction,  if $\bar \eta\colon \omega\Lrrw \omega'$ is a smooth, $\mb{G}$-equivariant natural transformation between the semi-strict connection $1$-forms, we define a natural transformation between the corresponding semi-strict connections 
			$R, R'\colon {\rm At}(\mb{E})\longrightarrow \Ad{\mb{E}},$ by $\eta_{[p, v]}=[\bigl(1_p, \bar \eta(p, v)\bigr)].$ The $\mb{G}$-equivariancy implies the map is well defined. Now $\pi(\eta_{[(p, v)]})=\pi (1_{p})=1_{\pi(p)}.$ Thus we obtain a smooth natural transformation $\eta\colon R\Lrrw R'$ satisfying $\pi(\eta_{[(p, v)]})=1_{\pi(p)}.$
			
			It is readily checked that both the maps are functorial and inverses of each other. 
			
			\item The proof is almost identical to that of the case of the semi-strict connections.
		\end{enumerate}
	\end{proof}
\end{theorem}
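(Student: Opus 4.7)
The object-level bijection in both parts is already furnished by \Cref{Prop:Corresconndiffform}: a morphism of VB-groupoids $R\colon\At(\mb{E})\to\Ad(\mb{E})$ is of the form $[(p,v)]\mapsto[(p,\omega(p,v))]$ (and similarly on morphisms) for a unique $\mb{G}$-equivariant $L(\mb{G})$-valued $1$-form $\omega$, and the condition $R\circ j^{/\mb{G}}=1$ (resp.\ $\simeq 1$) translates exactly into $\omega$ being a strict (resp.\ semi-strict) connection $1$-form. So the real content is to extend this to the morphism level of the two categories, and then verify functoriality.

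Given a $2$-morphism $\eta\colon R\Lrrw R'$ in $C_{\mb{E}}^{\rm semi}$ with associated $1$-forms $\omega,\omega'$, one has $\eta_{[p,v]}\colon[(p,\omega(p,v))]\to[(p,\omega'(p,v))]$ in $\Ad(\mb{E})$. The plan is to pick a canonical representative of this equivalence class of the form $(1_p,\bar{\eta}(p,v))\in E_1\times L(G_1)$: given any representative $(\gamma,K)$ of $\eta_{[p,v]}$, freeness of the $G_0$ action allows us to translate so that $s(\gamma)=p$; then, using $\pi_1(\eta_{[p,v]})=1_{\pi(p)}$, both $\gamma$ and $1_p$ lie in the same fibre of $E_1\to X_1$ with common source, so there is a unique $h\in H$ with $\gamma(h,e)=1_p$. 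Defining $\bar{\eta}(p,v):=\ad_{(h,e)^{-1}}K$ and using the adjoint formulae in \Cref{E:Adjonalgebras} shows that its source and target are precisely $\omega(p,v)$ and $\omega'(p,v)$. Smoothness is inherited from that of $\eta$, and the naturality of $\eta$ at the morphism level of $T\mb{E}$ transcribes into the statement that $\bar{\eta}\colon\omega\Lrrw\omega'$ is a natural transformation.

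Next I would verify the $\mb{G}$-equivariance $\bar{\eta}(pg,v\cdot g)=\ad_{1_g^{-1}}\bar{\eta}(p,v)$ by choosing representatives at the point $(p\theta,v\theta)$ obtained by translating the representative at $(p,v)$ by $1_\theta$; the key identity is $(1_{p\theta},\bar{\eta}(p\theta,v\theta))=(1_p,\bar{\eta}(p,v))\cdot 1_\theta$, which follows from the freeness of the $G_1$-action on the fibres and the canonical choice above. For the converse direction, given $\bar{\eta}\colon\omega\Lrrw\omega'$ satisfying the equivariance in \Cref{Def:Semisemistriccat}, I would simply set $\eta_{[p,v]}:=[(1_p,\bar{\eta}(p,v))]$; the equivariance makes this well-defined on equivalence classes, and by construction $\pi_1(\eta_{[p,v]})=1_{\pi(p)}$, so this is a valid $2$-morphism in $2\text{-}\mathrm{VBGpd}(\mb{X})$.

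Finally, I would check that the two assignments are mutually inverse (essentially by tracing through the definitions: one direction returns the original canonical representative, the other unambiguously recovers $\bar{\eta}$) and functorial in $\eta$ (composition in $\Ad(\mb{E})$ translates into pointwise composition in $L(G_1)$, which is exactly the composition of natural transformations in $\Omega_{\mb{E}}^{\rm semi}$). The strict case (2) is then immediate: under the above correspondence the condition $R\circ j^{/\mb{G}}=1_{\Ad(\mb{E})}$ (strict equality, not just $2$-isomorphism) restricts $\omega$ to a strict connection $1$-form, while the morphism-level bijection is identical word-for-word. The main potential obstacle is the canonical representative argument: ensuring that the unique $h\in H$ produced in the first step is smooth in $(p,v)$ and interacts correctly with the crossed module adjoint formulae in \Cref{E:Adjonalgebras}; once that is handled, the rest is bookkeeping.
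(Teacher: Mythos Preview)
Your proposal is correct and follows essentially the same approach as the paper: the canonical-representative argument (translate by $1_g$ so the source is $p$, then use the unique $h\in H$ with $\gamma(h,e)=1_p$ to land on $(1_p,\bar\eta(p,v))$), the verification of source/target via the adjoint formulae of \Cref{E:Adjonalgebras}, the equivariance check by acting with $1_\theta$, and the converse $\eta_{[p,v]}:=[(1_p,\bar\eta(p,v))]$ all mirror the paper's proof step for step. The paper likewise dismisses part (2) as identical to part (1), so your treatment there is also in line with the original.
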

Let  ${\bar C}_{\mb{E}}^{\rm{strict}},{\bar C}_{\mb{E}}^{\rm{semi}},{\bar \Omega}_{\mb{E}}^{\rm{semi}}$, and ${\bar \Omega}_{\mb{E}}^{\rm{strict}}$	be the respective collections of connected components of the categories  
${ C}_{\mb{E}}^{\rm{strict}},{C}_{\mb{E}}^{\rm{semi}},{ \Omega}_{\mb{E}}^{\rm{semi}}$, and ${ \Omega}_{\mb{E}}^{\rm{strict}}$.
Then the \Cref{strict connection=strict forms}	tells us the following.
\begin{corollary} \label{Cor:Setstrict connection=strict forms}
	Let $\mb{E}=[E_1\rra E_0]$ be a $\mb{G}=[G_1\rra G_0]$-bundle over a Lie groupoid $\mb{X}=[X_1\rra X_0].$
	\begin{enumerate}
		
		\item  There exists a bijection between ${\bar C}_{\mb{E}}^{\rm{strict}}$ and ${\bar \Omega}_{\mb{E}}^{\rm{strict}}$.
		
		\item There exists a bijection between ${\bar C}_{\mb{E}}^{\rm{semi}}$ and ${\bar \Omega}_{\mb{E}}^{\rm{semi}}$.	
	\end{enumerate}
\end{corollary}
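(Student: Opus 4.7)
The statement is a formal consequence of \Cref{strict connection=strict forms}. The plan is to observe that an isomorphism of categories, being in particular an equivalence of categories, descends to a bijection on the sets of isomorphism classes of objects, i.e. on the collections of connected components.

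First I would recall from \Cref{strict connection=strict forms} that there exist mutually inverse functors
\[
F^{\rm strict}\colon C^{\rm strict}_{\mb{E}} \longrightarrow \Omega^{\rm strict}_{\mb{E}}, \qquad G^{\rm strict}\colon \Omega^{\rm strict}_{\mb{E}} \longrightarrow C^{\rm strict}_{\mb{E}},
\]
together with the analogous pair $F^{\rm semi}, G^{\rm semi}$ for the semi-strict case. At the object level, these functors are precisely the assignments $R \mapsto \omega$ (respectively, $\omega \mapsto R$) established in \Cref{Prop:Corresconndiffform}.

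Next I would use the standard fact that any functor sends isomorphic objects to isomorphic objects. Hence the object-level map of $F^{\rm strict}$ sends a connected component of $C^{\rm strict}_{\mb{E}}$ into a unique connected component of $\Omega^{\rm strict}_{\mb{E}}$, yielding a well-defined set map
\[
\bar F^{\rm strict}\colon {\bar C}^{\rm strict}_{\mb{E}} \longrightarrow {\bar \Omega}^{\rm strict}_{\mb{E}},
\]
and similarly a map $\bar G^{\rm strict}$ in the reverse direction. Since $F^{\rm strict}\circ G^{\rm strict}=1_{\Omega^{\rm strict}_{\mb{E}}}$ and $G^{\rm strict}\circ F^{\rm strict}=1_{C^{\rm strict}_{\mb{E}}}$ as functors, the induced maps on connected components satisfy $\bar F^{\rm strict}\circ \bar G^{\rm strict}={\rm Id}$ and $\bar G^{\rm strict}\circ \bar F^{\rm strict}={\rm Id}$, proving (1). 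The same argument applied to the semi-strict pair $F^{\rm semi}, G^{\rm semi}$ gives (2).

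There is no genuine obstacle here: the content of the result lies entirely in \Cref{strict connection=strict forms}, and the present corollary is the routine passage from an isomorphism of categories to the induced bijection on $\pi_0$.
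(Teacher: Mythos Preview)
Your proof is correct and follows exactly the approach the paper takes: the corollary is stated immediately after \Cref{strict connection=strict forms} with the remark that the theorem ``tells us the following,'' and no further argument is given. Your write-up simply makes explicit the routine passage from an isomorphism of categories to the induced bijection on connected components.
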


\subsection{Connection on decorated principal 2-bundles}\label{SS:Conndeco}
Let $\ghta$ be a Lie crossed module and $\mb{G}=[G_1\rra G_0]$ the associated Lie $2$-group. Let $\mb{E}^{\rm dec} \ra \mb{X}$ be the decorated bundle associated to a principal $G$-bundle $\bigl(\pi\colon E_0 \rightarrow X_0, \mu, [X_1 \rightrightarrows X_0]\bigr)$
(\Cref{Prop:Decoliegpd}). First we 
compute the differentials of the various structure maps and the action maps associated to the Lie groupoid 
$\mb{E}^{\rm dec}=\bigl[(s^{*}E_0)^{\rm{dec}} \rightrightarrows E_0\bigr].$

The source, target and composition maps of the tangent Lie groupoid $\bigl[T(s^{*}E_0)^{\rm{dec}} \rightrightarrows TE_0\bigr]$
are respectively given by 
\begin{equation}\label{E:SoTaDiff}
	\begin{split}
		&{\widetilde s}_{* ((\gamma, p), h)}((X, v), \mathfrak K)=v,\\
		&{\widetilde t}_{* ((\gamma, p), h)}((X, v), \mathfrak K)=\mu_{* (\gamma, p)} (X, v)\cdot \tau(h^{-1})-{\delta_{\mu(\gamma, p)\tau({h}^{-1})}
			( \tau(\mathfrak K)\cdot \tau(h^{-1}))},\\
		&\bigl((\gamma_2, p_2, h_2), (X_2, v_2, {\mathfrak K}_2)\bigr)\circ \bigl((\gamma_1, p_1, h_1), (X_1, v_1, {\mathfrak K}_1)\bigr)\\
		&=\bigl((\gamma_2\circ \gamma_1, p_1, h_2 h_1), (X_2\circ X_1, v_1, h_2\cdot {\mathfrak K}_1+{\mathfrak K}_2\cdot h_1\bigr).
	\end{split}
\end{equation}
The differential of the target map $\widetilde t$ is computed by applying the chain-rule in the  following composition of maps
\[s^*E_0\times H\xra{({\rm Id},^{-1})}s^*E_0\times H\xra{(\mu,\tau)}E_0\times G\xra{} E_0.\]

Let us  compute the vertical vector field generating functor $\delta\colon \mb{E}^{\rm dec}\times L(\mb{G})\to T(\mb{E}^{\rm dec})$ with respect to the action defined in  \Cref{E:Actionondeco}.
We have
\begin{equation}\label{E:verdeco}
	\begin{split}
		\delta\colon \mb{E}^{\rm dec}\times L(\mb{G})&\ra T(\mb{E}^{\rm dec})\\
		(p, B) &\mapsto \delta_p(B)\\
		\bigl((\gamma, p, h) (A, B)\bigr)&\mapsto  \delta_p(B)-\bar{\alpha}_h(B)-A\cdot h.
	\end{split}
\end{equation}	

Note that, for a fixed $((\gamma, p), h)\in s^*E_0\times H,$ the map $\delta_{((\gamma, p), h)}\colon H\rtimes G\ra s^*E_0\times H$ is given by $(h',\, g)\mapsto \bigl(\gamma, pg, \alpha_{g^{-1}} (h'^{-1}h)\bigr)$. The first coordinate is the constant map, the second coordinate is the right translation map, whereas the third coordinate can be decomposed as 
\[H\times G\xra{^{-1},^{-1}}H\times G\xra{(-,-)}G\times H\xra{({\rm Id}, R_h)}G\times H\xra{\alpha}H.\] Then the formula in \ref{E:verdeco} is computed by applying the chain-rule.

The derivatives of the right action of 	$\mb{G}$ on $\mb{E}$ are given as, for fixed $g\in G, (h', g)\in G_1,$
\begin{equation}\label{E:Hordeco} 
	\begin{split}
		v&\mapsto v g\\
		(X, v, \mathfrak K)&\mapsto \bigl(X, v g, \alpha_{g^{-1}}(h'^{-1}\cdot \mathfrak K)\bigr),	
	\end{split}
\end{equation}		
for $v\in T_p E_0, \bigl((X, v), \mathfrak K\bigr)\in T_{((\gamma,\, p), h)}\bigl(s^{*}E_0^{\rm dec}\bigr).$
Here we have adopted the notations of \Cref{Remark:notations}.		

Suppose the principal $G$-bundle $\bigl(\pi\colon E_0 \rightarrow X_0, \mu, [X_1 \rightrightarrows X_0]\bigr)$ over $\mb{X}$ admits a connection $\omega,$ as defined in \cite{MR2270285}; that is $\omega$ is a connection on $G$-bundle $E_0 \rightarrow X_0$ satisfying $s^*\omega=t^*\omega,$ where $s, t\colon [s^*E_0\rra E_0$] respectively given by $s\colon (\gamma, p)\mapsto p,$ $t\colon (\gamma, p)\mapsto \mu(\gamma, p).$ The condition explicitly reads 
\begin{equation}\label{E:pullconcond}
	\omega_p(v)=\omega_{\mu(\gamma, p)}\bigl(\mu_{*, (\gamma, \, p)}(X, v)\bigr),
\end{equation}
for any $(\gamma, p)\in s^{*}E_0, (X, v)\in T_{(\gamma, p)}\bigl(s^* E_0\bigr)$.

Define an $L(H\rtimes G)$-valued differential form on $s^*E_0\times H$, \[\omega^{\rm dec}_{(\gamma, p,\, h)}(X, v, \mathfrak K)=\ad_{(h,\, e)}\bigl(\omega_p(v)\bigr)
-\mathfrak{K} \cdot h^{-1}.\] 
We will prove here that   $(\omega^{\rm dec}, \omega)$ is a strict connection $1$-form on $\mb{G}$-bundle $\mb{E}^{\rm dec}$ over $\mb{X}.$ Note that 
\begin{equation}\label{E:connectiongeneraal}
	\omega^{\rm dec}_{(\gamma,\, p,\, h)}(X, v, \mathfrak K)=\ad_{(h, e)}\bigl(\omega_p(v)\bigr)-\mathfrak{K}\cdot h^{-1},
\end{equation}
is same as 
$$\omega^{\rm dec}_{((\gamma,\, p), h)}(X, v, \mathfrak K)=\ad_{(h, e)}\bigl((s^*\omega)_{((\gamma, p), h)}((X, v), \mathfrak K)\bigr)-{\Theta}_h(\mathfrak K),$$  
where $\Theta$ is the Maurer-Cartan  form on $H,$ $\bigl((\gamma, p), h\bigr)\in s^* E_0\times H$ and $\bigl((X, v), \mathfrak K\bigr)\in T_{((\gamma,\, p), h)}(s^* E_0\times H)=T_{(\gamma, p)}s^{*}E_0\oplus T_hH.$

Let us verify that $(\omega^{\rm dec}, \omega)$ is a functor.
\begin{lemma}\label{lemma:Funcdecconn}
	$(\omega^{\rm dec}, \omega)$ define a functor $T\mb{E}^{\rm dec}\ra L(\mb{G}).$
	\begin{proof}
		Using \Cref{E:Adjonalgebras} we write
		\begin{equation}\nonumber
			\begin{split}
				&\omega^{\rm dec}_{(\gamma , p, h)}(X, v, \mathfrak K)\\
				&=\ad_{(h, e)}\bigl(\omega_p(v)\bigr)- \mathfrak{K}\cdot h^{-1}\\
				&=\underbrace{\omega_p(v)}_{\in L(G)}+\underbrace{h\cdot ({\bar \alpha}_{h^{-1}}({\omega_p(v)}))  -\mathfrak{K}\cdot h^{-1}}_{\in L(H)}.
			\end{split}
		\end{equation}
		
		Let $\bigl((\gamma_2, p_2, h_2), (X_2, v_2, {\mathfrak K}_2)\bigr)$ and  $\bigl((\gamma_1, p_1, h_1), (X_1, v_1, {\mathfrak K}_1)\bigr)$ be a pair of composable morphisms in $T{\mb{E}}^{\rm dec},$ which means, by \Cref{E:SoTaDiff}, in  particular
		\begin{equation}\label{E:sour-targetmatch}
			\begin{split}
				&p_2=\mu(\gamma_1, p_1)\tau(h_1)^{-1}\\
				&v_2=\mu_{* (\gamma_1, p_1)} (X_1, v_1)\cdot \tau(h_1^{-1})-\delta_{\mu(\gamma_1, p_1)\tau({h_1}^{-1})}(\tau({\mathfrak K}_1)\cdot\tau(h_1^{-1})).
			\end{split}
		\end{equation}
		Note \begin{equation}\label{E:sourcetargetexpli}
			\begin{split}
				&s\bigl(\omega^{\rm dec}_{(\gamma_2 , p_2, h_2)}(X_2, v_2, \mathfrak K_2)\bigr)=\omega_{p_2}(v_2),\\
				&t\bigl(\omega^{\rm dec}_{(\gamma_1 , p_1, h_1)}(X_1, v_1, \mathfrak K_1)\bigr)=\omega_{p_1}(v_1)+\tau (h_1^{-1}\cdot ({\bar \alpha}_{h_1}({\omega_{p_1}(v_1)}))
				-\mathfrak{K}_1\cdot h_{1}^{-1}).
			\end{split}
		\end{equation}
		Now plugging the equations of  \Cref{E:sour-targetmatch} into the equations above and noting that $\omega$ is a connection  on $G$-bundle $E_0\ra X_0$ satisfying $\omega_{(\mu(\gamma_1, p_1))}(\mu_{*}(X_1, v_1))=\omega_{p_1}(v_1)$ (see \Cref{E:pullconcond}), we obtain the required source-target consistency. 
		
		To verify the composition we separately compute 
		$$\bigl(\omega^{\rm dec}_{(\gamma_2 , p_2, h_2)}(X_2, v_2, \mathfrak K_2)\bigr)\circ \bigl(\omega^{\rm dec}_{(\gamma_1 , p_1, h_1)}(X_1, v_1, \mathfrak K_1)\bigr)$$
		and $$\omega^{\rm dec}_{(\gamma_2 , p_2, h_2)\circ (\gamma_1, p_1, h_1)}\bigl((X_2, v_2, \mathfrak K_2)\circ (X_1, v_1, \mathfrak K_1)\bigr),$$ using last of the equations in \Cref{E:SoTaDiff}. As before, we plug-in the relations  in \Cref{E:sourcetargetexpli} and use the properties of $\omega.$ After some calculation we derive,
		\begin{equation}\label{E:onecompo}
			\begin{split}
				&\bigl(\omega^{\rm dec}_{(\gamma_2 , p_2, h_2)}(X_2, v_2, \mathfrak K_2)\bigr)\circ \bigl(\omega^{\rm dec}_{(\gamma_1 , p_1, h_1)}(X_1, v_1, \mathfrak K_1)\bigr)\\
				=&\omega_{p_1}(v_1) 
				-\mathfrak{K}_1\cdot h_{1}^{-1}  -\mathfrak{K}_2\cdot h_{2}^{-1}\\
				+& \bigg[h_2\cdot \biggl({\bar \alpha}(h_2^{-1})({\rm ad}_{\tau(h_1)}(\omega_{p_1}(v_1)) )\biggr)+h_1\cdot \big({\bar \alpha}(h_1^{-1}) (\omega_{p_1}(v_1)) \big)\bigg]
			\end{split}
		\end{equation}
		and
		\begin{equation}\label{E:secondcompo}
			\begin{split}
				&\omega^{\rm dec}_{(\gamma_2 , p_2, h_2)\circ (\gamma_1, p_1, h_1)}\bigl((X_2, v_2, \mathfrak K_2)\circ (X_1, v_1, \mathfrak K_1)\bigr)\\
				=& \omega_{p_1}(v_1)-\mathfrak{K}_1\cdot h_{1}^{-1}  -\mathfrak{K}_2\cdot h_{2}^{-1}
				+ \bigg[h_2h_1 \cdot \big({\bar \alpha}(h_1^{-1}h_2^{-1}) (\omega_{p_1}(v_1))\big)\bigg].		
			\end{split}
		\end{equation}
		Then \Cref{Lemma:Identityfor later} implies the  terms inside [\, ] on both equations are identical. Since $\omega$ was multiplicative, we conclude $(\omega^{\rm dec}, \omega)$ is a functor $T\mb{E}^{\rm dec}\ra L(\mb{G}).$
\end{proof}	\end{lemma}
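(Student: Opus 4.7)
My proof plan is to verify the three defining conditions of a functor between Lie groupoids, namely compatibility with source, target, and composition. Since the unit compatibility follows from the source/target ones together with the linearity of $\omega^{\rm dec}$, it suffices to check these three.

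The first step is to decompose $\omega^{\rm dec}_{(\gamma,p,h)}(X, v, \mathfrak K)$ into its $L(G)$ and $L(H)$ components via $L(H\rtimes G)=L(H)\oplus L(G)$. Using the formulas from \Cref{E:Adjongroups,E:Adjonalgebras}, the element $\ad_{(h,e)}\bigl(\omega_p(v)\bigr)$ splits as $\omega_p(v)+h\cdot\bigl({\bar\alpha}_{h^{-1}}(\omega_p(v))\bigr)$, so the $L(G)$-component is exactly $\omega_p(v)$ while the $L(H)$-component is $h\cdot({\bar\alpha}_{h^{-1}}\omega_p(v))-\mathfrak K\cdot h^{-1}$. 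This decomposition makes the source condition automatic since the source map on $L(\mb{G})$ projects onto $L(G)$, and by \Cref{E:SoTaDiff} the source of the morphism in $T\mb{E}^{\rm dec}$ is $(p,v)$.

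For the target condition I would plug the formula $\widetilde t_*(X,v,\mathfrak K)=\mu_{*(\gamma,p)}(X,v)\cdot\tau(h^{-1})-\delta(\tau(\mathfrak K)\cdot\tau(h^{-1}))$ into $\omega_{\widetilde t(\gamma,p,h)}$, use $G$-equivariance of $\omega$, the connection property $\omega(\delta B)=B$, and finally the multiplicativity $\omega_{\mu(\gamma,p)}(\mu_{*}(X,v))=\omega_p(v)$ (\Cref{E:pullconcond}). The target of my expression, $\tau(\mathrm{L(H)\text{-component}})+\omega_p(v)$, should then coincide, which reduces to a straightforward bookkeeping after one applies the Peiffer identities.

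The composition condition is the main obstacle. Given composable morphisms $\bigl((\gamma_i,p_i,h_i),(X_i,v_i,\mathfrak K_i)\bigr)$, I would compute the two sides using \Cref{E:SoTaDiff}. The $L(G)$-components agree immediately because $\omega_{p_1}(v_1)$ persists on both sides (after using the matching condition and the multiplicativity of $\omega$). The delicate part is the $L(H)$-component: the composition on $L(\mb{G})$ adds the $L(H)$-components, producing terms like $h_2\cdot\bigl({\bar\alpha}(h_2^{-1})({\rm ad}_{\tau(h_1)}\omega_{p_1}(v_1))\bigr)+h_1\cdot\bigl({\bar\alpha}(h_1^{-1})\omega_{p_1}(v_1)\bigr)$ on one side, whereas the right-hand side gives $h_2h_1\cdot\bigl({\bar\alpha}(h_1^{-1}h_2^{-1})\omega_{p_1}(v_1)\bigr)$. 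Equality of these two expressions is precisely part (1) of \Cref{Lemma:Identityfor later}, which is an infinitesimal form of a crossed-module identity and is invoked exactly here. The remaining terms $-\mathfrak K_1\cdot h_1^{-1}-\mathfrak K_2\cdot h_2^{-1}$ match trivially. Thus the composition axiom reduces, after the dust settles, to the algebraic identity of Lemma~\ref{Lemma:Identityfor later}(1), which is the heart of the verification.
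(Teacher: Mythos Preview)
Your proposal is correct and follows essentially the same route as the paper's proof: decompose $\omega^{\rm dec}$ into its $L(G)$ and $L(H)$ components via \Cref{E:Adjonalgebras}, verify source--target consistency using the multiplicativity condition \Cref{E:pullconcond} and the connection axioms, and reduce the composition check to the algebraic identity of \Cref{Lemma:Identityfor later}(1). The only cosmetic difference is that the paper treats the source and target checks together as ``source--target consistency'' rather than separating them, but the underlying computations are identical.
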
	
\begin{lemma}\label{lemma:Connectioncdecconn}
	$\omega^{\rm dec}$ is a connection on $\hrtag=G_1$-bundle  $s^*E_0\times H\ra X_1.$
	\begin{proof}	
		Using \Cref{E:Hordeco,E:verdeco}  one directly verifies 
		$$\omega^{\rm dec}_{((\gamma,\, p),\, h)\cdot (h',\, g')}\bigl((X,\, v,\, \mathfrak K))\cdot (h',\, g')\bigr)=\ad_{(h'\, g')^{-1}}\bigl(\omega^{\rm dec}_{((\gamma,\, p),\, h)}\bigl((X,\, v),\, \mathfrak K\bigr)\bigr)$$
		and $$\omega^{\rm dec}_{((\gamma,\, p),\, \, h)}\bigl(\delta^{(A+B)}_{((\gamma, \, p),\,\, h))})=A+ B$$ for $A\in L(H), B\in L(G).$
\end{proof}	\end{lemma}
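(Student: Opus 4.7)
The plan is to verify directly the two defining properties of a principal connection $1$-form on the $G_1 = H \rtimes G$-bundle $s^*E_0 \times H \to X_1$ from the formula
\[\omega^{\rm dec}_{(\gamma, p, h)}(X, v, \mathfrak K) = \ad_{(h, e)}\bigl(\omega_p(v)\bigr) - \mathfrak K \cdot h^{-1}.\]
The only inputs needed are the defining properties of the classical connection $\omega$ on the $G$-bundle $E_0 \to X_0$, the explicit adjoint-action formulas \eqref{E:Adjonalgebras}, and the group identities of $H \rtimes G$ recorded in \eqref{E:grouppro}.

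For the equivariance, I would substitute the action formulas \eqref{E:Actionondeco} and \eqref{E:Hordeco} into the left-hand side. Writing $h_{\rm new} := \alpha_{g'^{-1}}(h'^{-1} h)$ for the translated $h$-coordinate, the ``$\ad$-term'' becomes $\ad_{(h_{\rm new}, e)}\bigl(\omega_{pg'}(vg')\bigr)$, and the classical $G$-equivariance of $\omega$ together with the computation $(h_{\rm new}, e)(e, g'^{-1}) = (h_{\rm new}, g'^{-1}) = (h', g')^{-1}(h, e)$ in $H \rtimes G$ simplifies this to $\ad_{(h', g')^{-1}} \ad_{(h, e)}\bigl(\omega_p(v)\bigr)$. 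For the ``Maurer--Cartan''-term, writing $\mathfrak K = \xi \cdot h$ with $\xi \in L(H)$ and exploiting that $\alpha_{g'^{-1}} \colon H \to H$ is a group homomorphism, the translated expression $\alpha_{g'^{-1}}(h'^{-1} \cdot \mathfrak K) \cdot h_{\rm new}^{-1}$ collapses to $\ad_{\alpha_{g'^{-1}}(h'^{-1})}\bigl(\alpha_{g'^{-1}}(\xi)\bigr)$, which by the first line of \eqref{E:Adjonalgebras} is exactly $\ad_{(h', g')^{-1}}(\xi)$ under the embedding $L(H) \hookrightarrow L(G_1)$. Combining the two pieces yields $\ad_{(h', g')^{-1}} \omega^{\rm dec}_{(\gamma, p, h)}(X, v, \mathfrak K)$.

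For the vertical property, \eqref{E:verdeco} identifies the fundamental vector field at $((\gamma, p), h)$ generated by $(A, B) \in L(H) \oplus L(G)$ as $(0_\gamma, \delta_p(B), -\bar\alpha_h(B) - A \cdot h)$. Applying $\omega^{\rm dec}$ and using the vertical property $\omega_p(\delta_p(B)) = B$ of the underlying classical connection, together with $\ad_{(h, e)}(0, B) = (h \cdot \bar\alpha_{h^{-1}}(B), B)$ from \eqref{E:Adjonalgebras}, the result equals $(A + h \cdot \bar\alpha_{h^{-1}}(B) + \bar\alpha_h(B) \cdot h^{-1},\, B)$. Thus the claim reduces to the Lie-algebra identity
\[h \cdot \bar\alpha_{h^{-1}}(B) + \bar\alpha_h(B) \cdot h^{-1} = 0,\]
which is the infinitesimal form of the group relation $\alpha(g, h^{-1}) = \alpha(g, h)^{-1}$ and is obtained by differentiating the latter in $g$ at the identity in the direction $B$.

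The main obstacle, as in \Cref{lemma:Funcdecconn}, is purely the bookkeeping: one must carefully distinguish left from right translation on $H$, the two partial differentials of $\alpha$, and the vector-space (not Lie-algebra) splitting $L(G_1) = L(H) \oplus L(G)$. However, the only genuinely non-trivial identities required are Peiffer-type identities of the sort already recorded in \Cref{Lemma:Identityfor later} and invoked in \Cref{lemma:Funcdecconn}, so once the notation is set up the verification is routine.
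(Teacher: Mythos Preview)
Your proposal is correct and follows exactly the same route as the paper's own proof: a direct verification of the two defining conditions of a connection $1$-form (equivariance and normalisation on vertical vectors) using the formulas \eqref{E:Hordeco}, \eqref{E:verdeco}, and \eqref{E:Adjonalgebras}. The paper's proof is a one-line ``one directly verifies'', whereas you have spelled out the bookkeeping---in particular the factorisation $(h_{\rm new}, g'^{-1}) = (h', g')^{-1}(h, e)$ and the identity $h \cdot \bar\alpha_{h^{-1}}(B) + \bar\alpha_h(B) \cdot h^{-1} = 0$---which is precisely what the paper leaves implicit.
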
	

Combining the last two lemmas we obtain the following result.	
\begin{proposition}\label{Prop:ConOnDeco}
	Let $\bigl(\pi\colon E_0 \rightarrow X_0, \mu, [X_1 \rightrightarrows X_0]\bigr)$ 
	be a principal $G$-bundle over the Lie groupoid $\mb{X}=[X_1\rra X_0]$	and $\omega$  a connection on it. Let $\ghta=\mb{G}$ be a Lie $2$-group. Then  $(\omega^{\rm dec}, \omega)$  is a strict connection $1$-form on $\mb{G}$-bundle $\mb{E}^{\rm dec}$ over $\mb{X},$ where $\omega^{\rm dec}$ is as defined in \Cref{E:connectiongeneraal}.		Appropriately modifying the connection $(\omega^{\rm{dec}},\omega)$ using \Cref{cor:stricconnto semi}, we obtain a semistrict connection.
\end{proposition}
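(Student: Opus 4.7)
My plan is to verify the three defining properties of a strict connection $1$-form by leveraging the hypothesis that $\omega$ is a connection on the principal $G$-bundle $E_0 \to X_0$ satisfying $s^{*}\omega = t^{*}\omega$ (so that, in the notation of \Cref{Ex:GGbundleconnection}, it is already a strict connection for the reduced $[G\rra G]$-structure). Specifically, it suffices to show (i) $(\omega^{\rm dec}, \omega)$ is a functor $T\mb{E}^{\rm dec}\to L(\mb{G})$, (ii) $\omega^{\rm dec}$ is a connection $1$-form on the principal $G_1$-bundle $(s^{*}E_0)^{\rm dec}\to X_1$, and (iii) the resulting $\mb{G}$-equivariant $L(\mb{G})$-valued $1$-form restricts to the identity on fundamental vector fields. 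By the characterisation in \Cref{Prop:Corresconndiffform}(1), these three conditions together imply that $(\omega^{\rm dec},\omega)$ is a strict connection $1$-form.

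For step (i), I would first record the structure maps of $T\mb{E}^{\rm dec}$ and the expressions for $s$ and $t$ of $\omega^{\rm dec}_{(\gamma,p,h)}(X,v,\mathfrak{K})$, which are collected in \Cref{E:SoTaDiff} and \Cref{E:sourcetargetexpli}. Source-target compatibility is a direct substitution of the matching conditions $p_2=\mu(\gamma_1,p_1)\tau(h_1)^{-1}$ and the expression for $v_2$ in the target formula, after which the hypothesis $\omega_{\mu(\gamma_1,p_1)}(\mu_{*}(X_1,v_1))=\omega_{p_1}(v_1)$ produces the desired equality. The compositional identity is the genuine work: expanding both
\[\omega^{\rm dec}_{(\gamma_2,p_2,h_2)}(X_2,v_2,\mathfrak{K}_2)\circ \omega^{\rm dec}_{(\gamma_1,p_1,h_1)}(X_1,v_1,\mathfrak{K}_1)\]
and
\[\omega^{\rm dec}_{(\gamma_2\circ\gamma_1,p_1,h_2h_1)}\bigl((X_2,v_2,\mathfrak{K}_2)\circ (X_1,v_1,\mathfrak{K}_1)\bigr)\]
using the $L(H)\oplus L(G)$ decomposition of \Cref{E:Adjonalgebras} reduces the equality to the first identity in \Cref{Lemma:Identityfor later} evaluated at $\omega_{p_1}(v_1)\in L(G)$. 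This is the main obstacle, since it is precisely the bracketed $L(H)$-components that must coincide and this coincidence is exactly what \Cref{Lemma:Identityfor later}(1) was prepared for.

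For step (ii), I would plug \Cref{E:Hordeco} and \Cref{E:verdeco} into the expression \Cref{E:connectiongeneraal} for $\omega^{\rm dec}$ and verify
\[\omega^{\rm dec}_{((\gamma,p),h)\cdot (h',g')}\bigl((X,v,\mathfrak{K})\cdot (h',g')\bigr)=\ad_{(h',g')^{-1}}\bigl(\omega^{\rm dec}_{((\gamma,p),h)}(X,v,\mathfrak{K})\bigr)\]
and
\[\omega^{\rm dec}_{((\gamma,p),h)}\bigl(\delta^{(A,B)}_{((\gamma,p),h)}\bigr)=(A,B),\]
both routine calculations using the Maurer-Cartan reinterpretation of the $-\mathfrak{K}\cdot h^{-1}$ term. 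Since by \Cref{Ex:GGbundleconnection} $\omega$ already encodes strict $[G\rra G]$-equivariance, these checks combined with functoriality from step (i) give the $\mb{G}$-equivariance required in \Cref{Def:Equivardiffcateg}, and the vertical-evaluation identity yields the commutation up to nose in Diagram \ref{Dia:condver}. Thus by \Cref{Prop:Corresconndiffform}(1), $(\omega^{\rm dec},\omega)$ is a strict connection $1$-form.

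Finally, for the semi-strict part, I would invoke \Cref{cor:stricconnto semi} directly: choosing any $L(H)$-valued $1$-form $\lambda$ on $E_0$ with $\lambda(pg,v\cdot g)=\alpha_{g^{-1}}(\lambda(p,v))$, the pair $(\omega+\tau(\lambda),\,\omega^{\rm dec}+\tau(s^{*}\lambda)+t^{*}\lambda-s^{*}\lambda)$ defines a semi-strict connection on $\mb{E}^{\rm dec}$. Existence of such $\lambda$ on a principal $G$-bundle is standard (any $L(H)$-valued tensorial form works, including the zero form, which recovers the strict case). The non-triviality of the modified connection is then controlled by the choice of $\lambda$, and in the nonzero case $\kappa_\omega(p,v)=-\lambda(p,v)$ in the sense of \Cref{Prop:Semisirictnatural}, which confirms semi-strictness without further computation.
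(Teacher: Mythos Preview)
Your proposal is correct and follows essentially the same approach as the paper: the paper proves the proposition by combining \Cref{lemma:Funcdecconn} (your step (i), including the reduction of the composition check to \Cref{Lemma:Identityfor later}(1)) with \Cref{lemma:Connectioncdecconn} (your step (ii)), and then invokes \Cref{cor:stricconnto semi} for the semi-strict part. The only minor slip is notational: in the final paragraph $\kappa_\omega$ takes arguments in $E_0\times L(G)$ rather than $TE_0$, so the identification should read $\kappa_\omega(p,B)=-\lambda(p,\delta_p(B))$.
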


\begin{corollary}\label{Ex:EoXodecobundleconnect}
	Every principal $2$-bundle over a discrete Lie groupoid  admits a connection. 
	\begin{proof}Let $[E_1\rra E_0]$ be a $[G_1\rra G_0]$-bundle over a discrete Lie groupoid $[X\rra X]$.
		Every such principal $2$-bundle is a decorated bundle (\Cref{Corollary:discreteisdecorated}). Then any choice of a connection on the underlying principal $G_0$-bundle over $X$ gives a connection on the principal $2$-bundle (\Cref{Prop:ConOnDeco}).
	\end{proof}
\end{corollary}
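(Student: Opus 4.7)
The plan is to reduce the problem to the existence of a classical connection on an ordinary principal $G_0$-bundle and then invoke the decoration construction of \Cref{Prop:ConOnDeco}. So let $\mb{E}=[E_1\rra E_0]$ be a $\mb{G}=[G_1\rra G_0]$-bundle over the discrete Lie groupoid $[X\rra X]$, and let $\ghta$ be the Lie crossed module associated to $\mb{G}$, so that $G_0=G$.

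The first step is to observe that the base Lie groupoid $[X\rra X]$ is discrete, so by \Cref{Corollary:discreteisdecorated} the principal $2$-bundle $\mb{E}(\mb{X},\mb{G})$ is isomorphic (via a categorical connection) to the decorated bundle $\mb{E}^{\rm dec}$ associated to the underlying principal $G$-bundle $\pi_0\colon E_0\ra X$. Explicitly, under \Cref{Ex:EoXodecobundle}, the morphism space becomes $E_0\times H$, and the Lie groupoid action of $[X\rra X]$ on $E_0$ is trivial. Since principal $G$-bundle isomorphisms pull back connections, it suffices to construct a (strict) connection on the decorated bundle $\mb{E}^{\rm dec}$.

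The second step is to produce a connection on the underlying principal $G$-bundle $\pi_0\colon E_0\ra X$ in the sense of \cite{MR2270285}. By the classical partition-of-unity argument, the ordinary $G$-bundle $E_0\to X$ admits a connection $1$-form $\omega$. The compatibility condition $s^{*}\omega=t^{*}\omega$ required in \cite{MR2270285} is automatic here, because the source and target maps of $[X\rra X]$ are both the identity and the action map $\mu\colon X\times_{s,X,\pi_0}E_0\ra E_0$ is just projection onto $E_0$.

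The third and final step is to apply \Cref{Prop:ConOnDeco}: the pair $(\omega^{\rm dec},\omega)$, with $\omega^{\rm dec}$ defined by \Cref{E:connectiongeneraal}, is a strict connection $1$-form on $\mb{E}^{\rm dec}$, hence on $\mb{E}(\mb{X},\mb{G})$. There is no real obstacle here since both of the required ingredients (reduction to a decorated bundle and connection-lifting from $G$ to $\mb{G}$) have already been established earlier in the paper; the corollary is essentially a direct combination of \Cref{Corollary:discreteisdecorated} and \Cref{Prop:ConOnDeco}, with the only thing to check being that a connection on the ordinary principal $G$-bundle $E_0\to X$ exists, which is a standard fact.
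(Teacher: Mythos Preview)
Your proof is correct and follows essentially the same approach as the paper: reduce to a decorated bundle via \Cref{Corollary:discreteisdecorated}, pick any classical connection on the underlying $G_0$-bundle $E_0\to X$, and then apply \Cref{Prop:ConOnDeco}. The only difference is that you spell out explicitly why the compatibility condition $s^{*}\omega=t^{*}\omega$ is automatic over a discrete base, which the paper leaves implicit.
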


The pair of groups $({\mathbb R}^n\rtimes G, G),$ where $G\subset GL(n, {\mathbb R})$ is of particular interest in  Cartan geometry (See \cite{kobayashi1956connections,slovak2009parabolic,cattafi2021cartan}). We make one passing observation here relating our construction with that of Cartan's connections (without going into the specifics).  	

\begin{example}
	Suppose $E_0\ra X_0$ is a $G$-bundle.  Consider the $\mb{G}=\ghta$-bundle $[(E_0\times H)\rra E_0]$ over the Lie groupoid $[X_0\rra X_0]$ (see \Cref{Ex:EoXodecobundle}).		
	Let $\theta$ be a \textit{Cartan connection} for the pair  $\bigl(G\subset {\ker(\tau)\rtimes G}\, , \ker(\tau)\rtimes G\bigr)$ on the principal $G$-bundle $E_0\ra X_0$ (\cite[Definition $2.2$]{cattafi2021cartan}), that is a $L(\ker(\tau)\rtimes G)$-valued $1$-form $\theta$ on $E_0$ satisfying the properties,
	\begin{equation}\nonumber
		\begin{split}
			&\theta_p\colon T_p E_0\ra L(\ker(\tau)\rtimes G) \, {\rm is\, a \, linear\,  isomorphism \, for \, every}\, p\in E_0,\\ 
			&\theta_{p g}(v \cdot g)=\ad_{(e, g)^{-1}}\bigl(\theta(v)\bigr),\qquad \forall p\in E_0, g\in G,\\
			&\theta_p \bigl(\delta_p(B)\bigr)=B, \qquad \forall B\in L(G).
		\end{split}
	\end{equation}

	Then one obtains a connection 
	\begin{equation}\nonumber
		\widetilde \theta_{(p, h)}(v, \mathfrak K)=\ad_{(h, e)}\bigl(\theta_p(v)\bigr)-\mathfrak{K}\cdot h^{-1}
	\end{equation}	
	on $\hrtag$-bundle $[(E_0\times H)\rra E_0].$ The source maps $E_0\times H\ra E_0, (p, h)\mapsto p$ and $\hrtag \to G, (h, g)\mapsto g$ define a map from the principal $\hrtag$-bundle $E_0\times H\ra X_0$ to the principal $G$-bundle $E_0\ra X_0.$	
	By push forwarding the connection 	$\widetilde \theta$ on 
	$\hrtag$-bundle $E_0\times H\ra X_0$ along this map of principal bundles we obtain a connection $\theta_0$ on the $G$-bundle $E_0\ra X_0.$ Then $(\widetilde \theta, \theta_0)$ is a connection $1$-form on $\mb{G}$-principal bundle $[E_0\times H\rra E_0]$ over $[X_0\rra X_0].$ 
\end{example}

\subsection{On the existence of connections on a principal $2$-bundle}
In this section, we comment on the existence of connections on principal $2$-bundle over a Lie groupoid. 

We call a Lie groupoid $\mb{X}=[X_1\rra X_0]$ \textit{proper \'etale} if the source map (and hence the target map as well) is a proper \'etale map. It is already known that a principal $G$-bundle $\bigl(E_0\ra X_0, \mu, \mb{X}\bigr)$ over a proper \'etale Lie groupoid $\mb{X}=[X_1\rra X_0]$ admits a connection (\cite[Theorem 3.16]{MR2270285}), that is there exists a connection $\omega$ on the principal $G$-bundle $E_0\ra X_0$ satisfying
$$\omega_p=\omega_{\mu(\gamma,\, p)}.$$
Let $\ghta=\mb{G}$ be a Lie $2$-group. We have seen in \Cref{SS:Conndeco} that if a Lie group $G$-bundle $\bigl(E_0\ra X_0, \mu, \mb{X}\bigr)$ admits a connection then the decorated bundle $\mb{E}^{\rm dec}$ over $\mb{X}$ admits a connection. The \Cref{prop:Characterisdecorated} gives us a criteria for existence of 
a connection on a Lie $2$-group bundle over a proper \'etale Lie groupoid.

\begin{proposition}
	Let $\ghta=\mb{G}$ be a Lie $2$-group,and   $\mb{E}=[E_1\rra E_0]$  a  $\mb{G}$-bundle over a proper \'etale Lie groupoid $\mb{X}=[X_1\rra X_0].$ If the $\mb{G}$-bundle  $\mb{E}$ admits a categorical connection, then it also admits a connection. 
\end{proposition}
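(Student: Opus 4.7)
The plan is to combine three ingredients already established in the paper: the characterization of decorated bundles via categorical connections (\Cref{prop:Characterisdecorated}), the explicit construction of connections on decorated bundles (\Cref{Prop:ConOnDeco}), and the cited existence theorem for connections on $G$-bundles over proper \'etale Lie groupoids (\cite[Theorem 3.16]{MR2270285}). The idea is that a categorical connection effectively reduces the problem from a Lie $2$-group bundle to an ordinary principal $G$-bundle over $\mb{X}$, where the existence theory is already in place.

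First I would invoke \Cref{prop:Characterisdecorated} to conclude that, since $\mb{E}$ admits a categorical connection, there exists a principal $G$-bundle $\bigl(\pi\colon E_0\to X_0,\mu,\mb{X}\bigr)$ over the Lie groupoid $\mb{X}$ such that $\mb{E}$ is isomorphic (in ${\rm Bun}(\mb{X},\mb{G})$) to the decorated bundle $\mb{E}^{\rm dec}$ associated with $(E_0\to X_0,\mu,\mb{X})$. Next, since $\mb{X}=[X_1\rra X_0]$ is proper \'etale, \cite[Theorem 3.16]{MR2270285} guarantees the existence of a connection $\omega$ on the underlying principal $G$-bundle $E_0\to X_0$ compatible with the action $\mu$, that is, satisfying $s^{*}\omega=t^{*}\omega$ on $s^{*}E_0$.

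With such an $\omega$ in hand, I would then apply \Cref{Prop:ConOnDeco} to produce the strict connection $1$-form $(\omega^{\rm dec},\omega)\colon T\mb{E}^{\rm dec}\to L(\mb{G})$ on the decorated bundle, where $\omega^{\rm dec}$ is the differential form defined in \Cref{E:connectiongeneraal}. Finally, transporting this strict connection along the isomorphism $\mb{E}^{\rm dec}\cong\mb{E}$ of principal $\mb{G}$-bundles (which is functorial and $\mb{G}$-equivariant, hence carries $\mb{G}$-equivariant $L(\mb{G})$-valued forms to $\mb{G}$-equivariant $L(\mb{G})$-valued forms preserving the verticality condition in Diagram~\ref{Dia:condver}) yields a strict connection on $\mb{E}$ itself. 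In particular, by \Cref{cor:stricconnto semi}, one also obtains semi-strict connections on $\mb{E}$.

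The only mild subtlety is in the last step: one must check that the pullback of a strict connection $1$-form under an isomorphism of principal $2$-bundles is again a strict connection $1$-form, but this is a straightforward consequence of the $\mb{G}$-equivariance of the isomorphism and the fact that it intertwines the vertical vector field generating functors $\delta$ on $\mb{E}^{\rm dec}$ and $\mb{E}$. There is no obstacle beyond the careful bookkeeping of this transport, since the existence of the underlying $G$-bundle connection is reduced to the classical proper \'etale case.
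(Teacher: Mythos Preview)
Your proof is correct and follows essentially the same route as the paper: the paper's argument (given in the paragraph immediately preceding the proposition rather than as a separate proof) combines \Cref{prop:Characterisdecorated}, the existence theorem \cite[Theorem 3.16]{MR2270285} for connections over proper \'etale Lie groupoids, and \Cref{Prop:ConOnDeco}, exactly as you do. You are slightly more explicit in noting that the connection must be transported along the isomorphism $\mb{E}^{\rm dec}\cong\mb{E}$, which the paper leaves implicit.
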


\section{Gauge 2-group of a principal 2-bundle} \label{Gauge 2 group of a principal 2 bundle}

In \Cref{Remark:Gauge2-group} we have defined Gauge $2$-group $\mc{G}(\mb{E})$ of a  $\mb{G}$-bundle $\mb{E}\ra \mb{X}$. Consider the  $2$-group $C^{\infty}(\mb{E},\mb{G})^{\mb{G}},$ whose objects are morphisms of Lie groupoids $\sigma\colon \mb{E} \ra \mb{G}$ such that the diagram 
\[	\begin{tikzcd}[sep=small]
	\mb{E}\times \mb{G}\arrow[dd, ""'] \arrow[rrr, "\sigma\times {}^{-1}"] &  &  & \mb{G}\times \mb{G}  \arrow[dd, "{\Ad}"] \\
	&  &  &                                                 \\
	\mb{E} \arrow[rrr, "\sigma"]                                                                            &  &  & \mb{G}                                             
\end{tikzcd}\]
commutes, and morphisms are natural transformations $\Phi\colon \sigma_1\Lrrw \sigma_2$ satisfying $\Phi(p g)={1_g}^{-1}\Phi(p) {1_g}$. The strict $2$-group structure 
in $C^\infty(\mb{E},\mb{G})^{\mb{G}}$ is given by pointwise multiplication.

The  $2$-group  isomorphism between $\mc{G}(\mb{E})$ and $C^{\infty}(\mb{E},\mb{G})^{\mb{G}}$   is   given by
\begin{equation}\label{E:gaugeequi}
	\begin{split}
		&F(x_i)=x_i\sigma(x_i), \forall x_i\in E_i, i\in \{0, 1\}\\
		&\Psi(p)=1_p\Phi(p), \forall p\in E_0.
	\end{split}
\end{equation}

Let $\ghta$ be the Lie crossed module associated to the Lie $2$-group $\mb{G}=[G_1\rra G_0].$ Then a functor $\sigma\in C^{\infty}(\mb{E},\mb{G})^{\mb{G}}$ is of the form 
$\sigma(p)\in G$ and $\sigma(\widetilde \gamma)=(\sigma_{\widetilde \gamma}, g_{\widetilde \gamma})\in \hrtag.$ The functoriality of $\sigma$ imposes following conditions on 
$g_{\widetilde \gamma}\in G, \sigma_{\widetilde \gamma}\in H,$
\begin{equation}\label{E:Functorgauge}
	\begin{split}
		&g_{\widetilde \gamma}=\sigma_{s(\widetilde \gamma)},\\
		&\sigma_{t(\widetilde \gamma)}=\tau(\sigma_{\widetilde \gamma})\sigma_{s(\widetilde \gamma)},\\
		&\sigma_{{\widetilde \gamma}_2} \sigma_{{\widetilde \gamma}_1}=\sigma_{{\widetilde \gamma}_2\circ {\widetilde \gamma}_1}.
	\end{split}
\end{equation}
 A straightforward but tedious computation, using \Cref{E:Adjongroups},  gives us the following $\mb{G}$-equivariancy condition on $(\sigma_{\widetilde \gamma}, \sigma_p)$
\begin{equation}\label{E:Gaugequi} 
	\begin{split}
		&\sigma_{p \, g}=\Ad_{g^{-1}}(\sigma_p)\\
		&\sigma_{{\widetilde \gamma}\, (h, g)}=\alpha_{g^{-1}}\bigl(\Ad_{h^{-1}}\,(\sigma_{\widetilde \gamma})\bigr).
	\end{split}
\end{equation}
Now suppose $\sigma, \sigma '\colon \mb{E}\ra \mb{G}$ are a pair of  objects in $C^{\infty}(\mb{E},\mb{G})^{\mb{G}}$ and $\widetilde \Phi\colon \sigma\Lrrw \sigma '$ an arrow in $C^{\infty}(\mb{E},\mb{G})^{\mb{G}}.$ Then ${\widetilde \Phi}(p)\colon \sigma(p)\ra \sigma '(p)$ is of the form $( \Phi_{p},\sigma_p),$ where $\Phi(p)\in H$ satisfies the following conditions,
\begin{equation}\label{E:Natugauge}
	\begin{split}
		&\sigma'(p)=\tau(\Phi(p))\, \sigma(p),\\
		&\sigma'_{\widetilde \gamma}=\Phi_q\, \sigma_{\widetilde \gamma}\, {\Phi_p}^{-1},\\
		&\Phi_{p\, g}=\alpha_{g^{-1}}(\Phi_p),
	\end{split}
\end{equation}
for all $p\xrightarrow{\widetilde \gamma}q\in E_1, g\in G.$

Though our definition of gauge transformation is a straightforward categorification  of the traditional definition, as we will see, it offers an enrichment that does not have a traditional counterpart. 

Let  $\pi\colon \mb{E} \rightarrow \mb{X}$ be a $\mb{G}=[G_1\rra G_0]$-bundle over a Lie groupoid $\mb{X}=[X_1\rra X_0],$ which admits a  categorical connection. 
Let $F\colon \mb{E}\ra \mb{E}$ be a gauge transformation. Observe that $F$ acts on the set of categorical connections by
\begin{equation}\label{E:Gaugeactcatcon}
	({\mathcal C}\cdot F) (\gamma, p):=F\bigl(\mathcal C(\gamma, F^{-1}(p))\bigr),
\end{equation}
where  $\mathcal{C}$ is a categorical connection and $(\gamma, p)\in s^* E_0$. The corresponding transformation of the action reads as
$\mu(\gamma,p)\mapsto {\mu}_F (\gamma, p):=F\bigl(\mu(\gamma, F^{-1}(p))\bigr).$

Choose  a categorical connection $\mc{C}$ to realize the $\mb{G}$-bundle as a decorated  bundle $\mb{E}^{\rm{dec}}=[s^*E_0\times H \rra E_0]\rra \mb{X}$
over $\mb{X},$ as described in  \Cref{SS:Catconnection}.  
Let $\overline{\mb{E}}^{\rm{dec}}$ be the decorated bundle corresponding to the action ${\mu}_F.$ Note that  $\overline{\mb{E}}^{\rm{dec}}$ and  ${\mb{E}}^{\rm{dec}}$ both have same set of objects and morphisms. 

\begin{proposition}
	Let $\theta_{{\mathcal C}\cdot F}\colon \overline{\mb{E}}^{\rm{dec}}\ra \mb{E}$ be the map defined in \Cref{Lem:isodecgeneral}.	
	Let $\sigma\colon {\mb{E}}\ra \mb{G}$ be  a gauge transformation. Then $\sigma\circ \theta_{{\mathcal C}\cdot F} \colon \overline{\mb{E}}^{\rm{dec}}\ra \mb{G}$ is a gauge transformation on $\overline{\mb{E}}^{\rm{dec}}.$
	\begin{proof}  We verify $\sigma\circ \theta_{{\mathcal C}\cdot F} $ is a functor $\overline{\mb{E}}^{\rm{dec}}\ra \mb{G}$ satisfying conditions in \Cref{E:Gaugequi}. 
\end{proof}	\end{proposition}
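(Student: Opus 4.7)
The plan is to exploit the fact, recorded in \Cref{Lem:isodecgeneral}, that $\theta_{\mathcal{C}\cdot F}$ is not merely a functor but an \emph{isomorphism of $\mb{G}$-bundles} from $\overline{\mb{E}}^{\rm{dec}}$ to $\mb{E}$. Consequently, checking that $\sigma\circ \theta_{\mathcal{C}\cdot F}$ is a gauge transformation reduces to transporting the defining properties of $\sigma$ along a $\mb{G}$-equivariant smooth functor, which is a purely formal exercise.

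First I would observe that since $\sigma$ and $\theta_{\mathcal{C}\cdot F}$ are both smooth functors, so is the composition. Thus the only content to verify is the equivariance conditions in \Cref{E:Gaugequi}. Writing $\theta:=\theta_{\mathcal{C}\cdot F}$ for brevity, at the object level one has, for $\tilde p\in E_0$ and $g\in G_0=G$,
\begin{equation*}
(\sigma\circ\theta)(\tilde p\cdot g)=\sigma\bigl(\theta(\tilde p)\cdot g\bigr)=\Ad_{g^{-1}}\bigl(\sigma(\theta(\tilde p))\bigr)=\Ad_{g^{-1}}\bigl((\sigma\circ\theta)(\tilde p)\bigr),
\end{equation*}
using the $\mb{G}$-equivariance of $\theta$ in the first equality and the first line of \Cref{E:Gaugequi} for $\sigma$ in the second. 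At the morphism level, for a morphism $\tilde \gamma$ in $\overline{\mb{E}}^{\rm{dec}}$ and $(h',g)\in G_1=H\rtimes G$, the same argument applied to the action of $G_1$ via \Cref{E:Actionondeco} gives
\begin{equation*}
(\sigma\circ\theta)\bigl(\tilde\gamma\cdot(h',g)\bigr)=\sigma\bigl(\theta(\tilde\gamma)\cdot(h',g)\bigr)=\alpha_{g^{-1}}\!\Bigl(\Ad_{h^{-1}}\bigl(\sigma(\theta(\tilde\gamma))\bigr)\Bigr),
\end{equation*}
where I have written $h$ for the $H$-component of $\sigma(\theta(\tilde\gamma))$; this is exactly the second line of \Cref{E:Gaugequi} for $\sigma\circ\theta$.

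Next I would check that the functoriality identities of \Cref{E:Functorgauge} are automatic: because both $\sigma$ and $\theta$ preserve source, target, and composition, so does their composite, so the relations $g_{\tilde\gamma}=\sigma_{s(\tilde\gamma)}$, $\sigma_{t(\tilde\gamma)}=\tau(\sigma_{\tilde\gamma})\sigma_{s(\tilde\gamma)}$, and $\sigma_{\tilde\gamma_2}\sigma_{\tilde\gamma_1}=\sigma_{\tilde\gamma_2\circ\tilde\gamma_1}$ for the functor $\sigma\circ\theta$ are inherited directly from the corresponding relations for $\sigma$ on $\mb{E}$.

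I do not anticipate a hard step: the entire verification is a two-line application of equivariance and functoriality of $\theta_{\mathcal{C}\cdot F}$. If anything, the only thing worth flagging explicitly is that the notion of \emph{gauge transformation} being used is that of \Cref{Remark:Gauge2-group} under the identification \Cref{E:gaugeequi}; once this identification is made, the statement is essentially ``pullback of an equivariant map along a bundle isomorphism is equivariant,'' which requires no computation beyond what is displayed above.
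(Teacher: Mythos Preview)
Your approach is correct and is exactly what the paper does: the paper's proof is the single line ``We verify $\sigma\circ \theta_{\mathcal C\cdot F}$ is a functor $\overline{\mb{E}}^{\rm dec}\ra \mb{G}$ satisfying conditions in \Cref{E:Gaugequi},'' and you have simply written out that verification using the $\mb{G}$-equivariance of $\theta_{\mathcal C\cdot F}$ from \Cref{Lem:isodecgeneral}.

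One small slip to fix: in your morphism-level display you write $\Ad_{h^{-1}}$ and then say ``where I have written $h$ for the $H$-component of $\sigma(\theta(\tilde\gamma))$.'' That is not what \Cref{E:Gaugequi} says. The $h$ appearing in $\Ad_{h^{-1}}$ there is the $H$-component of the \emph{acting} group element, i.e.\ your $h'$, not a component of $\sigma(\theta(\tilde\gamma))$. The cleanest way to state the morphism-level step is simply
\[
(\sigma\circ\theta)\bigl(\tilde\gamma\cdot(h',g)\bigr)=\sigma\bigl(\theta(\tilde\gamma)\cdot(h',g)\bigr)=\Ad_{(h',g)^{-1}}\bigl(\sigma(\theta(\tilde\gamma))\bigr),
\]
which is the defining equivariance of an object of $C^\infty(\mb{E},\mb{G})^{\mb{G}}$; the formula in \Cref{E:Gaugequi} is then just the $H$-component of this identity. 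With that correction your argument is complete.
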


We can further simplify the calculations for the gauge transformation on a decorated bundle as follows. Note that $\bigl((\gamma, p), h\bigr)=\bigl((\gamma, p), e\bigr)\, (h^{-1}, e).$ Therefore if 
$F\colon {\mb{E}}^{\rm{dec}}\ra{\mb{E}}^{\rm{dec}}$ is a gauge transformation, then $F(\gamma, p, h)=F(\gamma, p, e) (h^{-1}, e).$ In turn the gauge transformation   $F\colon {\mb{E}}^{\rm{dec}}\ra{\mb{E}}^{\rm{dec}}$ is completely determined by a smooth functor ${F_0}\colon s^* E_0\ra s^*E_0\times H$ satisfying $F_0(p\, g)=F_0(p)\, g, {F_0}(\gamma, p\, g)={F_0}(\gamma, p)\, 1_g.$ Consider the inclusion $1\colon [G\rra G]\hookrightarrow [H\rtimes G\rra G]$. Then, $(F_0,1)$ is a map of principal $2$-bundles $s^*E_0\ra \mb{E}^{\rm{dec}}$ from the $[G\rra G]$-bundle $s^*E_0\ra \mb{X}$ to 
to the $[H\rtimes G\rra G]$-bundle $\mb{E}^{\rm{dec}}\ra \mb{X}$. 

The association is given as $F(p)=F_0(p)$ and $F(\gamma, p, h)=F_0(\gamma, p)(h^{-1}, e).$ Equivalently if $\sigma\in C^{\infty}(\mb{E}^{\rm dec},\mb{G})^{\mb{G}}$ corresponds to $F\colon {\mb{E}}^{\rm{dec}}\ra{\mb{E}}^{\rm{dec}},$ then $\sigma$ is completely determined  by a smooth functor $\sigma_0\colon s^*E_0\ra \mb{G}$ satisfying $\sigma_0(p\, g)=g^{-1}\sigma_0(p) g, \sigma_0(\gamma, p \, g)=1_g^{-1}\sigma_0(\gamma, p) 1_g,$ and then $\sigma$ is given as $\sigma(p)=\sigma_0(p)$ and $\sigma\bigl((\gamma, p), h\bigr)=(h, e)\, \sigma_0(\gamma, p)\, (h^{-1}, e).$
\begin{proposition}\label{Prop:Decogauge}
	Let $\mb{E}^{\rm dec}=[s^*E_0\times H\rra E_0]$ be the decorated $\ghta=\mb{G}$-bundle associated to a $G$-bundle $(E_0\ra X_0\, \mu, \mb{X})$ over the Lie groupoid $\mb{X}=[X_1\rra X_0].$ Then a gauge transformation $\sigma\colon {\mb{E}}^{\rm{dec}}\ra{\mb{G}}$  is completely determined by a smooth functor  $\sigma_0\colon s^*E_0\ra \mb{G}$ satisfying $\sigma_0(p\, g)=g^{-1}\sigma_0(p) g, \sigma_0(\gamma, p \, g)=1_g^{-1}\sigma_0(\gamma, p) 1_g,$ and $\sigma$ is given as
	\begin{equation}
		\begin{split}
			&\sigma(p)=\sigma_0(p),\\
			&\sigma\bigl((\gamma, p), h\bigr)=(h, e)\, \sigma_0(\gamma, p)\, (h^{-1}, e). 
		\end{split}
	\end{equation}
	Equivalently it is completely determined by 
	a map of principal $2$-bundles  $({F_0},1)\colon s^* E_0\ra \mb{E}^{\rm{dec}}$. 
	The association is given as $F(p)=F_0(p)$ and $F\bigl((\gamma, p), h\bigr)=F_0(\gamma, p)(h^{-1}, e).$   
\end{proposition}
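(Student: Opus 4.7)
The plan is to leverage the decomposition $((\gamma, p), h) = ((\gamma, p), e) \cdot (h^{-1}, e)$ of any morphism of $\mb{E}^{\rm dec}$ (where $(h^{-1}, e) \in G_1$ acts via the $\mb{G}$-action of \Cref{E:Actionondeco}) together with the gauge equivariance identity $\sigma(y \cdot k) = k^{-1} \sigma(y) k$, obtained as the standard translation of $F(y k) = F(y) k$ under the correspondence \Cref{E:gaugeequi}.

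For the first direction I would set $\sigma_0(p) := \sigma(p)$ and $\sigma_0(\gamma, p) := \sigma((\gamma, p), e)$, and verify that $\sigma_0 \colon s^*E_0 \to \mb{G}$ is a smooth functor satisfying the two equivariance conditions. Functoriality is inherited from $\sigma$ since $(\gamma, p) \mapsto ((\gamma, p), e)$ is itself a functor from the pullback Lie groupoid $[s^*E_0 \rra E_0]$ into $\mb{E}^{\rm dec}$, which is immediate from the composition rule in \Cref{Prop:Decoliegpd}. The condition $\sigma_0(p g) = g^{-1}\sigma_0(p) g$ is the first line of \Cref{E:Gaugequi}, while $\sigma_0(\gamma, p g) = 1_g^{-1}\sigma_0(\gamma, p) 1_g$ follows from the identity $((\gamma, p), e) \cdot 1_g = ((\gamma, p g), e)$ (directly verified from \Cref{E:Actionondeco} with $(h', g') = (e, g)$) combined with the equivariance of $\sigma$. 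The reconstruction formula then follows by computing $(h^{-1}, e)^{-1} = (h, e)$ in the semidirect product, using the inverse $(h, g)^{-1} = (\alpha_{g^{-1}}(h^{-1}), g^{-1})$, and applying the equivariance identity to $\sigma(((\gamma, p), e) \cdot (h^{-1}, e))$, which yields $(h, e)\cdot \sigma_0(\gamma, p)\cdot (h^{-1}, e)$ as claimed.

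For the converse, given a smooth functor $\sigma_0$ satisfying the equivariance conditions, I would define $\sigma$ on $\mb{E}^{\rm dec}$ by the displayed formulas and check (i) functoriality of $\sigma$ under the composition of $\mb{E}^{\rm dec}$, and (ii) the full $\mb{G}$-equivariance \Cref{E:Gaugequi}. Both reduce to the corresponding properties of $\sigma_0$ together with semidirect product identities and the Peiffer identities \Cref{E:Peiffer}. The equivalent formulation in terms of a map of principal $2$-bundles $(F_0, 1)\colon s^*E_0 \to \mb{E}^{\rm dec}$ is then immediate from $F(x) = x \cdot \sigma(x)$ with $F_0(\gamma, p) := ((\gamma, p), e)\cdot \sigma_0(\gamma, p)$; the fact that $(F_0, 1)$ is a $[G\rra G]\hookrightarrow [H\rtimes G\rra G]$-bundle map corresponds exactly to the two equivariance conditions on $\sigma_0$. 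I expect the main technical obstacle to lie in part (i): composable pairs in $\mb{E}^{\rm dec}$ satisfy the nontrivial matching condition $p_2 = \mu(\gamma_1, p_1)\tau(h_1^{-1})$, so equating $\bigl((h_2, e)\sigma_0(\gamma_2, p_2)(h_2^{-1}, e)\bigr) \circ \bigl((h_1, e)\sigma_0(\gamma_1, p_1)(h_1^{-1}, e)\bigr)$ with $(h_2 h_1, e)\,\sigma_0(\gamma_2 \circ \gamma_1, p_1)\,(h_2 h_1, e)^{-1}$ requires unwinding the twist through the second equivariance condition applied to the reparametrization $\sigma_0(\gamma_2, \mu(\gamma_1, p_1)\tau(h_1^{-1}))$ and repeated use of the Peiffer identities.
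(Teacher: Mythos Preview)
Your proposal is correct and follows essentially the same approach as the paper: the argument in the paper (given in the discussion immediately preceding the proposition) rests on the very same decomposition $((\gamma,p),h)=((\gamma,p),e)\cdot(h^{-1},e)$ combined with the $\mb{G}$-equivariance of $F$ (resp.\ $\sigma$), and then passes to the $\sigma$-picture via \Cref{E:gaugeequi}. You in fact spell out the converse direction (functoriality and full equivariance of the reconstructed $\sigma$) more carefully than the paper does; your anticipated Peiffer-identity computation for composable pairs is the right place where the work lies, and the paper leaves that verification implicit.
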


\begin{example}
	Any gauge transformation on a principal $G$-bundle $E\ra M$ defines a gauge transformation on the $[G\rra G]$-bundle $[E\rra E]$ over $[X\rra X]$ and vice versa. 
\end{example}

\begin{example}Recall that a principal $[G\rra G]$-bundle $[E_1\rra E_0]\ra [X_1\rra X_0]$ is same as the pullback bundle $[s^*E_0\rra E_0]\ra [X_1\rra X_0]$ defined with respect to 
	an action $\mu\colon s^*E_0\ra E_0$ (\Cref{E:Example of principal 2-bundle}). Then, any gauge transformation on a principal $[G\rra G]$-bundle $[E_1\rra E_0]\ra [X_1\rra X_0]$ is given by a gauge transformation $F\colon E_0\ra E_0$ on the principal $G$-bundle $E_0\ra X_0$ such that $F\big(\mu (\gamma, p)\big)=\mu (\gamma,F(p))$.
\end{example}

\begin{example}\label{Ex:EoXodecobundlegaguge}
	Let $\ghta=\mb{G}$ be a Lie $2$-group and $E\ra X$ a principal $G$-bundle. Consider the decorated Lie $2$-group $\ghta$-bundle ${E}^{\rm dec}:=[E\times H\rra E]$ over $[X\rra X]$ constructed in \Cref{Ex:EoXodecobundle}. Any gauge transformation $F\colon E\ra E$ on the principal $G$-bundle $E\ra X$ defines a gauge transformation on the
	$\ghta$-bundle ${E}^{\rm dec}\ra E^{\rm dec}$
	by $p\mapsto F(p)$ and $(p, h)\mapsto \bigl(F(p), h\bigr),$ for $p\in E, h\in H.$
\end{example} 

\begin{example}\label{E:Example of principal 2-bundle ordinary Guage}
	Consider the $[H\rra e]$-bundle   $[E_1\rra X_0]$ over $[X_1\rra X_0]$ of \Cref{E:Example of principal 2-bundle ordinary}.   Then a gauge transformation $F\colon E_1\ra E_1$ on the (traditional) principal $H$-bundle $E_1\ra X_1$ defines a gauge transformation on $[H\rra e]$-bundle   $[E_1\rra X_0]$ over $[X_1\rra X_0]$ by $x\mapsto x$ and $\widetilde \gamma \mapsto F(\widetilde \gamma),$ for $x\in X_0, \widetilde \gamma \in E_1$.  
\end{example}

\subsection{Action of Gauge $2$-group on the category of Connections} \label{Action of Gauge $2$-group on the category of Connections}

Let $\mb{G}$ be a Lie $2$-group, $\pi\colon \mb{E} \rightarrow \mb{X}$  a $\mb{G}$-bundle over $\mb{X}$ and $\mathcal{G}(\mb{E})$ its gauge $2$-group. Let  $\Omega_{\mb{E}}^{\rm{semi}}$ and  $\Omega_{\mb{E}}^{\rm{strict}}$ respectively be the categories of strict and semi-strict connection $1$-forms. Let $F\colon \mb{E}\ra \mb{E}$ be a gauge transformation. Then by taking the differentials of $F$  we obtain the  $\mb{G}$-equivariant Lie groupoid isomorphisms $F_*\colon T\mb{E}\ra T\mb{E}$
\begin{equation}\nonumber
	\begin{split}
		&(p, v)\mapsto F_{*}\bigl(p, v\bigr):=\bigl(p, F_{*, p}(v)\bigr),\\
		&(\widetilde \gamma, \widetilde X)\mapsto  F_{*}(\widetilde \gamma, \widetilde X):=\bigl(\widetilde \gamma, F_{*, \widetilde \gamma}(\widetilde X)\bigr) \end{split}
\end{equation}
for any $(p, v)\in TE_0, (\widetilde \gamma, \widetilde X) \in TE_1.$ Similarly if $\Psi\colon F \Lrrw F'$ is an arrow in the category $\mathcal{G}(\mb{E}),$ then the differential of $\Psi$ gives us a $\mb{G}$-equivariant natural isomorphism $\Psi_*\colon F_*\Lrrw F_*':$
\begin{equation}
	(p, v)\mapsto \Psi_*(p, v):=\bigl(1_p, \Psi_{*, p}(v)\bigr)\colon F_*(p, v)\longrightarrow F'_*(p, v).
\end{equation}
Now given an object  $\omega \in \Omega_{\mb{E}}^{\rm{strict}}\, \rm{or}\,  \Omega_{\mb{E}}^{\rm{semi}}$ and an arrow $\bigl(\zeta\colon \omega\Lrrw \omega'\bigr) \in \Omega_{\mb{E}}^{\rm{strict}}\, \rm{or}\,  \Omega_{\mb{E}}^{\rm{semi}}$  we respectively define 
\begin{equation}\label{E:gauge2action}
	\begin{split}
		&(F,\omega) \mapsto \omega\circ F_*^{-1} \colon T\mb{E}\ra L(\mb{G}),\\
		&\bigl(\Psi\colon F \Lrrw F',\zeta\colon \omega \Lrrw \omega'\bigr) \mapsto \zeta\circ_{H} {\Psi_*}^{-1}\colon \omega\circ F_{*}^{-1}\Lrrw  \omega\circ F_{*}^{-1},
	\end{split}
\end{equation}  
where $\circ_H$ denote the horizontal composition of natural transformations. We claim that  \Cref{E:gauge2action}  defines an action of the Lie $2$-group $\mathcal{G}(\mb{E})$  
on 	$\Omega_{\mb{E}}^{\rm{strict}}$ and $\Omega_{\mb{E}}^{\rm{semi}}.$ If $\omega\in \Omega_{\mb{E}}^{\rm{strict}}$ then it is obvious that  $\omega\circ F_*^{-1}\in  \Omega_{\mb{E}}^{\rm{strict}}.$ Moreover $\mb{G}$-equivariance of $\zeta\colon \omega \Lrrw \omega'$ and  $\Psi\colon F \Lrrw F'$ give the $\mb{G}$-equivariance of $\zeta\circ_{H} {\Psi_*}^{-1}$ for a morphism $\Psi$ either in $\Omega_{\mb{E}}^{\rm{strict}}$ or $\Omega_{\mb{E}}^{\rm{semi}}.$ Perhaps the only non-trivial part is to show that $\omega\circ F_*^{-1}\in  \Omega_{\mb{E}}^{\rm{semi}}$ for $\omega\in  \Omega_{\mb{E}}^{\rm{semi}}.$  Precisely we have to show that if there exists a $\mb{G}$-equivariant natural isomorphism  $\kappa\colon \omega \circ \delta \Longrightarrow \rm{pr_2}$ then there exists a $\mb{G}$-equivariant  natural isomorphism  $\kappa^{*}\colon (\omega\circ F_*^{-1}) \circ \delta \Longrightarrow \rm{pr_2},$
where  $\delta$ is the vertical vector field generating functor for the action of  $\mb{G}$ on $\mb{E}.$ To see this we proceed as follows

For $(p, B) \in E_0 \times L(G_0)$ we define $\kappa_{*}\colon \bigl(\omega\circ F_{*}^{-1}\bigr)\circ \delta  \Lrrw{\rm pr_2}$ as $\kappa_{*}(p, B)=\kappa\bigl(F^{-1}(p), B\bigr)$. Note  that since a gauge transformation preserves the vertical vector field,  $F^{-1}_{*,\, p}(\delta_p(B))=\delta_{F^{-1}(p)}(B),$ we have 
$s\bigl(\kappa_{*}(p, B)\bigr)=s\bigl(\kappa\bigl(F^{-1}(p), B\bigr)\bigr)=\omega(\delta_{F^{-1}(p)}(B))=\bigl(\omega\circ F_*^{-1}\bigr)\circ \delta(p, B)$ and $t\bigl(\kappa_{*}(p, B)\bigr)=t\bigl(\kappa\bigl(F^{-1}(p), B\bigr)\bigr)=B.$ Thus 
$$\kappa_{*}(p, B)\colon \bigl(\omega\circ F_{*}^{-1}\bigr)\circ \delta(p, B) \rightarrow B.$$

Moreover since $\kappa\colon \omega \circ \delta \Longrightarrow \rm{pr_2}$ is a natural transformation, for any  $(p, \, B)\xrightarrow{(\widetilde \gamma,\, K)}(q,\, B')\in E_1\times L(G_1),$ we have 
\begin{equation}
	K \circ \kappa(F^{-1}(p),\,  B)= \kappa(F^{-1}(q), \,B') \circ \omega\bigl(F^{-1}(\gamma), \delta_{F^{-1}(\gamma)}(K)\bigr).
\end{equation}
Substituting $\delta_{F^{-1}(\widetilde \gamma)}(K)$ by $F^{-1}_{*,\, \widetilde \gamma}\bigl(\delta_{\widetilde \gamma}(K)\bigr)$ we confirm that $\kappa_{*}$ is indeed a natural transformation. Now we verify $\mb{G}$-equivariancy of $\kappa_*,$
$$\kappa_* \bigl((p, \, B)\, g\bigr)=\kappa \bigl((F^{-1}(p), B) \, g\bigr)=\kappa (F^{-1}(p), B)\, 1_g=\kappa_*(p, \, B)\, 1_g.$$

\begin{proposition}\label{Prop:Actiongaugeconnecat}
	Let $\pi\colon \mb{E} \rightarrow \mb{X}$ be a $\mb{G}$-bundle over $\mb{X}$ and $\mathcal{G}(\mb{E})$  its gauge $2$-group. Then \Cref{E:gauge2action}		
	defines an action of $\mathcal{G}(\mb{E})$ on $\Omega_{\mb{E}}^{\rm{semi}}.$ The action restricts to $\Omega_{\mb{E}}^{\rm{strict}}\subset \Omega_{\mb{E}}^{\rm{semi}}.$ 
\end{proposition}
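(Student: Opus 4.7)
The plan is to verify three things: first, that for each object $F \in \mc{G}(\mb{E})$ and each $\omega \in \Omega_{\mb{E}}^{\rm{semi}}$, the assignment $\omega \mapsto \omega \circ F_*^{-1}$ produces a $\mb{G}$-equivariant $L(\mb{G})$-valued $1$-form which again satisfies the semi-strict condition; second, that the assignment on morphisms is well defined and functorial in both slots, so that each $F$ yields an endofunctor of $\Omega_{\mb{E}}^{\rm{semi}}$; and third, that these endofunctors assemble into a strict $2$-group action, with the restriction to $\Omega_{\mb{E}}^{\rm{strict}}$ being automatic from how the constructions interact with verticality.

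For the object-level well-definedness in the strict case, note that $F_*\colon T\mb{E}\to T\mb{E}$ is a $\mb{G}$-equivariant Lie groupoid isomorphism, so $\omega\circ F_*^{-1}$ is automatically a $\mb{G}$-equivariant $L(\mb{G})$-valued $1$-form in the sense of \Cref{Definition:LGvaluedformOnLiegroupoid,Def:Equivardiffcateg}. Since any gauge transformation is a bundle automorphism covering the identity on $\mb{X}$, its differential carries vertical vectors to vertical vectors, and in fact $F^{-1}_{*,p}(\delta_p(B)) = \delta_{F^{-1}(p)}(B)$ for $B\in L(G_0)$, and likewise on morphisms. Hence if $\omega$ is strict, so is $\omega\circ F_*^{-1}$. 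For the semi-strict case, the author's pre-statement computation gives the candidate natural isomorphism $\kappa_*(p,B) := \kappa(F^{-1}(p),B)$ witnessing $(\omega\circ F_*^{-1})\circ \delta \Lrrw \mathrm{pr}_2$; I would only need to spell out that naturality of $\kappa_*$ on morphisms of $\mb{E}\times L(\mb{G})$ and its $\mb{G}$-equivariance follow by substituting $F^{-1}_{*,\widetilde\gamma}(\delta_{\widetilde\gamma}(K)) = \delta_{F^{-1}(\widetilde\gamma)}(K)$ into the corresponding properties of $\kappa$, exactly as sketched in the paragraph preceding the statement.

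For the morphism-level check, given $\zeta\colon \omega\Lrrw\omega'$ in $\Omega_{\mb{E}}^{\rm{semi}}$ and $\Psi\colon F\Lrrw F'$ in $\mc{G}(\mb{E})$, I would verify that the horizontal composite $\zeta\circ_H \Psi_*^{-1}$ is again a smooth natural isomorphism $\omega\circ F_*^{-1}\Lrrw \omega'\circ F'^{-1}_*$ and that it satisfies the equivariance condition of \Cref{Def:Semisemistriccat}. The equivariance $\bigl(\zeta\circ_H \Psi_*^{-1}\bigr)\bigl((p,v)\cdot g\bigr) = \bigl(\zeta\circ_H \Psi_*^{-1}\bigr)(p,v)\cdot 1_g$ follows by composing the $\mb{G}$-equivariance of $\zeta$ with that of $\Psi_*^{-1}$ (the latter obtained by differentiating $\Psi(p\cdot g) = \Psi(p)\cdot 1_g$). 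Functoriality of the assignment in the $\mc{G}(\mb{E})$-argument and in the $\Omega$-argument reduces to the interchange law for horizontal and vertical composition of natural transformations.

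Finally, for the $2$-group action axioms I would check that $(F_2\circ F_1)_* = F_{2,*}\circ F_{1,*}$ and $(\mathrm{Id}_{\mb{E}})_* = \mathrm{Id}_{T\mb{E}}$, both of which are the chain rule and the functoriality of the tangent construction, so that $\omega\mapsto \omega\circ (F_2\circ F_1)_*^{-1} = (\omega\circ F_{2,*}^{-1})\circ F_{1,*}^{-1}$. The analogous compatibility for $2$-morphisms uses the interchange law for the $2$-category of smooth categories once again. The restriction to $\Omega_{\mb{E}}^{\rm{strict}}$ is immediate: strictness is characterized by $\omega\circ\delta = \mathrm{pr}_2$, and this equality is preserved by the substitution $\omega\mapsto \omega\circ F_*^{-1}$ because $F_*$ commutes with $\delta$ on the nose. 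The only mildly subtle step is the well-definedness of $\kappa_*$ in the semi-strict case, but the preparatory computation already in the text reduces it to a routine check.
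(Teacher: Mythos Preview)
Your proposal is correct and follows essentially the same route as the paper: the paper's argument is the discussion immediately preceding the proposition, where the key nontrivial point---constructing $\kappa_*(p,B)=\kappa(F^{-1}(p),B)$ and verifying it is a $\mb{G}$-equivariant natural isomorphism using $F^{-1}_{*}\circ\delta=\delta\circ(F^{-1}\times\mathrm{Id})$---is exactly what you reproduce. You add explicit verification of the $2$-group action axioms (associativity via the chain rule, compatibility of $2$-morphisms via interchange), which the paper leaves implicit, but this is an elaboration rather than a different approach.
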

If one represents $F$ by $\sigma\in C^{\infty}(\mb{E},\mb{G})^{\mb{G}},$ then the gauge transformations of a strict connection $1$-form $\omega^{\rm st}$ and a semi-strict connection $1$-form $\omega^{\rm se}$ are respectively  expressed as
\begin{equation}\label{E:gaugewithgroup}
	\begin{split}
		&\omega^{\rm st}\mapsto \Ad_{\sigma}\omega^{\rm st}-(d\sigma) \sigma^{-1},\\
		&\omega^{\rm se}\mapsto \Ad_{\sigma}\omega^{\rm se}-{\widehat\kappa}_{\omega^{\rm se}}\circ (d\sigma) \sigma^{-1}-(d\sigma) \sigma^{-1},
	\end{split}
\end{equation}
where ${\widehat\kappa}_{\omega^{\rm se}}\colon \mb{E}\times L({\mb{G}})\longrightarrow  L({\mb{G}})^{\tau}=[L(H)\rtimes \tau\bigl(L(H)\bigr)\rra \tau\bigl(L(H)\bigr)]$
is as  in \Cref{Prop:Semisirictnatural}. Note that here everything is defined point-wise.  	

\begin{remark}
	Since $\Omega_{\mb{E}}^{\rm{strict}}$ and $\Omega_{\mb{E}}^{\rm{semi}}$ respectively are isomorphic to $C^{\rm{strict}}_\mb{E}$ and $C^{\rm{semi}}_\mb{E},$ the action in  \Cref{Prop:Actiongaugeconnecat} induces actions of $\mathcal{G}(\mb{E})$ on $C^{\rm{strict}}_\mb{E}$ and $C^{\rm{semi}}_\mb{E}.$ The induced action is given as
	$(R, , F) \mapsto \overline{F} \circ R \circ \overline{T F^{-1}},$
	where  $\overline{F}\colon {\rm Ad}(\mb{E}) \rightarrow {\rm Ad}(\mb{E})$ sends  $[x_i, A_i]$ to $[F(x_i), A_i]$ for any $[x_i, A_i]\in \Ad (E_i), i\in \{0, 1\}$ and 
	$\overline{TF}\colon {\rm At}(\mb{E}) \rightarrow {\rm At}(\mb{E})$ sends an object $[x_i, v_i]$ to $[F(x_i), F_{{*, x_i}}(v_i)]$ for any $[x_i, v_i]\in {\rm At}(E_i), i\in \{0, 1\}.$
\end{remark}	

As with the connections, one can consider the set of categorical connected components $\bar{\mc{G}}(\mb{E})$ of the gauge $2$-group $\mc{G}(\mb{E})$ for a $\mb{G}$-bundle $\pi\colon \mb{E}\ra \mb{X}.$ Then we have the following.
\begin{corollary}
	Let $\pi\colon \mb{E} \rightarrow \mb{X}$ be a $\mb{G}$-bundle over $\mb{X}$ and $\bar{\mathcal{G}(\mb{E})}$  its connected components of the gauge $2$-group. Then \Cref{E:gauge2action}		
	defines an action of ${\bar{\mathcal{G}}(\mb{E})}$ on $\bar{\Omega_{\mb{E}}}^{\rm{semi}}.$ The action restricts to $\bar{\Omega}_{\mb{E}}^{\rm{strict}}\subset \bar{\Omega}_{\mb{E}}^{\rm{semi}}.$ 
\end{corollary}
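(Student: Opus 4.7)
The plan is to leverage Proposition~\ref{Prop:Actiongaugeconnecat} directly: once we know that $\mathcal{G}(\mb{E})$ acts on $\Omega_{\mb{E}}^{\rm{semi}}$ as a strict $2$-group acting on a category, descending to connected components is largely formal, and the only real content is to check that $2$-isomorphisms on either side produce genuine morphisms in $\Omega_{\mb{E}}^{\rm{semi}}$. Concretely, I would define the candidate action $\bar{\mathcal{G}}(\mb{E}) \times \bar{\Omega}_{\mb{E}}^{\rm{semi}} \to \bar{\Omega}_{\mb{E}}^{\rm{semi}}$ at the level of representatives by $\bigl([F],[\omega]\bigr) \mapsto [\omega \circ F_{*}^{-1}]$ using the formula of \Cref{E:gauge2action}, and then verify that it is well defined on each of the two arguments.

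First I would check well-definedness in the gauge argument. Suppose $\Psi\colon F \Lrrw F'$ is a $2$-morphism in $\mathcal{G}(\mb{E})$, so that $[F]=[F']$ in $\bar{\mathcal{G}}(\mb{E})$. The differential construction $\Psi \mapsto \Psi_*$ recalled just before \Cref{E:gauge2action} gives a $\mb{G}$-equivariant natural isomorphism $\Psi_*\colon F_* \Lrrw F'_*$, and hence its inverse $\Psi_*^{-1}\colon F'^{-1}_* \Lrrw F_*^{-1}$. Horizontal composition with the identity natural transformation on $\omega$ yields $\operatorname{id}_{\omega}\circ_H \Psi_*^{-1}\colon \omega \circ F'^{-1}_* \Lrrw \omega \circ F_*^{-1}$, which by the very argument given in the proof of Proposition~\ref{Prop:Actiongaugeconnecat} is a $\mb{G}$-equivariant morphism in $\Omega_{\mb{E}}^{\rm{semi}}$. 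Thus $[\omega\circ F_*^{-1}] = [\omega\circ F'^{-1}_*]$ in $\bar{\Omega}_{\mb{E}}^{\rm{semi}}$.

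Next I would verify well-definedness in the connection argument: if $\zeta\colon \omega \Lrrw \omega'$ is a morphism in $\Omega_{\mb{E}}^{\rm{semi}}$, then $\zeta \circ_H \operatorname{id}_{F_*^{-1}}\colon \omega \circ F_*^{-1}\Lrrw \omega'\circ F_*^{-1}$ is again a $\mb{G}$-equivariant natural isomorphism (this is immediate from the $\mb{G}$-equivariance of $\zeta$ and the $\mb{G}$-equivariance of $F_*^{-1}$). Hence $[\omega\circ F_*^{-1}]=[\omega'\circ F_*^{-1}]$. Combining the two shows that the assignment descends unambiguously to $\bar{\mathcal{G}}(\mb{E}) \times \bar{\Omega}_{\mb{E}}^{\rm{semi}}\to \bar{\Omega}_{\mb{E}}^{\rm{semi}}$.

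Finally, the group axioms at the level of connected components follow from the $2$-group axioms for $\mathcal{G}(\mb{E})$: identity and associativity already hold strictly at the level of representatives since $(F \cdot F')_* = F_* \circ F'_*$ and $\mathrm{Id}_* = \mathrm{Id}$, so no further coherence is needed after passing to $\pi_0$. The restriction statement is automatic: if $\omega \in \Omega_{\mb{E}}^{\rm{strict}}$, then by the proof of Proposition~\ref{Prop:Actiongaugeconnecat} already $\omega \circ F_*^{-1}$ lies in $\Omega_{\mb{E}}^{\rm{strict}}$, and hence $[\omega\circ F_*^{-1}]\in \bar{\Omega}_{\mb{E}}^{\rm{strict}}$; the two descent verifications above go through verbatim with $\Omega^{\rm{semi}}$ replaced by $\Omega^{\rm{strict}}$. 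The only mildly delicate point is checking that the horizontally composed natural transformation in step one is indeed a morphism in the connection category (not just in $\Omega_{\mb{E}^{\mb{G}}}$), but this is exactly what was already established in Proposition~\ref{Prop:Actiongaugeconnecat}, so no new work is required.
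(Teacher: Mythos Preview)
Your proof is correct and is exactly the formal descent argument the paper leaves implicit: the corollary is stated in the paper without any proof, as an immediate consequence of \Cref{Prop:Actiongaugeconnecat}, and you have simply written out the routine verification that the action on morphisms given in \Cref{E:gauge2action} forces well-definedness on connected components in each variable. There is nothing to compare---you have supplied the details the authors chose to omit.
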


\subsection{An Extended symmetry of $\Omega_{\mb{E}}^{\rm{semi}}$} In fact, $\Omega_{\mb{E}}^{\rm{semi}}$ enjoys a larger symmetry than given in \Cref{Prop:Actiongaugeconnecat}. To see this, consider 
$$\Omega^{G}\bigl({E_0}, L(H)\bigr)=\bigl\{\lambda\colon TE_0\ra L(H)\big|\, \lambda_{p\cdot g}(v\cdot g)=\alpha_{g^{-1}}\bigl(\lambda_p(v)\bigr)\bigr\},$$
the $G$-equivariant $L(H)$-valued smooth $1$-forms on $E_0.$ Observe that every such $\lambda\in \Omega^{G}\bigl({E_0}, L(H)\bigr)$ can be viewed as a $\mb{G}$-equivariant
$1$-form $\overline\lambda \colon T\mb{E}\ra L(\mb{G})$ on $\mb{E}$ taking values in $L(\mb{G})^{\tau}=\big[L(H)\rtimes \tau\bigl(L(H)\bigr)\rra \tau\bigl(L(H)\bigr)\big]\subset L(\mb{G}),$
\begin{equation}\label{E:lambdabar}
	\begin{split}
		&\overline \lambda:=\tau(\lambda)\colon TE_0\ra \tau\bigl(L(H)\bigr),\\
		&\overline \lambda:=\bigl(t^*\lambda-s^*\lambda+(\tau(s^*\lambda)\bigr)  \colon TE_1\ra L(H)\oplus \tau(L(H).
	\end{split}
\end{equation}

Let $\widehat{\Omega}^{\mb{G}}(E_0,L(H))$ be the category whose objects are functors of the form \Cref{E:lambdabar} and morphisms are fiber-wise linear natural transformations.

Now, we intertwine the category  $\widehat{\Omega}^{\mb{G}}(E_0,L(H))$ with the gauge 
$2$-group $\mc{G}(\mb{E})$ to define a $2$-group
$\mc{G}^{\rm ext}:=\mc{G}{(\mb{E})}\times  \widehat{\Omega}^{\mb{G}}(E_0,L(H))$ with group products
\begin{equation}\label{E:semidirectgengauge}\begin{split}
		&(F, \lambda)(F', \lambda ')=\bigl(F F ', \lambda+(\lambda'\circ F_*^{-1})\bigr),\\
		&(\Psi, \theta)(\Psi', \theta ')=\bigl(\Psi \Psi ', \theta+(\theta'\circ_H \Psi_*^{-1})\bigr).
\end{split}\end{equation}
One has to apply \Cref{Remark:Interchange law of Lie 2-algebra} to verify that $\mc{G}^{\rm ext}$ is a $2$-group. Note that here we have made the identification $\Obj \big( \mc{G}^{\rm ext} \big) \simeq \Obj \big( \mc{G}{(\mb{E})} \big) \times  {\Omega}^{G}(E_0,L(H))$.

Then \Cref{Prop:Semisirictnatural} allows us to define a left action of $\mc{G}^{\rm ext}$ on $\Omega_{\mb{E}}^{\rm{semi}}$ by
\begin{equation}\label{E:GengaugeactdefwithF}
	\begin{split}
		&\bigl((F, \lambda),\omega\bigr)\mapsto \omega\circ F_*^{-1}+\overline \lambda\colon T\mb{E}\ra L(\mb{G}),\\
		&\bigl((\Psi, \theta),\zeta \bigr)\mapsto \zeta\circ_H \Psi_*^{-1}+ \theta.
\end{split}\end{equation}
One can as well express the action above as 
\begin{equation}\label{E:Gengaugeactdef}
	\begin{split}
		\bigl((\sigma, \lambda),\omega \bigr)\mapsto \bigl(\Ad_{\sigma}(\omega)-\kappa_{\omega}\circ (d\sigma) \sigma^{-1}-(d\sigma) \sigma^{-1}+\overline \lambda\bigr)\colon T\mb{E}\ra L(\mb{G}),
	\end{split}
\end{equation}
where $\sigma\in C^{\infty}(\mb{E},\mb{G})^{\mb{G}}$.

We call the action above an \textit{extended gauge transformation}.
It is obvious that this action does not restrict to $\Omega_{\mb{E}}^{\rm{strict}}$ (see \Cref{cor:stricconnto semi}). Particularly, the action we derived in 
\Cref{E:GengaugeactdefwithF}	is of some importance in higher gauge theory and related areas of physics (for example $2$-BF theories). We refer to the work of Martins and Mikovi\' c \cite{martins2011lie}, and works of Martins and Picken in this context \cite{MR2661492, martins2011fundamental, MR2764890}. It would be instructive to consider the following example. 

\begin{example}\label{Ex:EoXodecobundleconnectgen}
	Consider the $\mb{G}=\ghta$-bundle ${E}^{\rm dec}:=[E\times H\rra E]$ over $[X\rra X]$ defined by a traditional principal $G$-bundle $E\ra X$ as in \Cref{Ex:EoXodecobundle}. A gauge transformation $\sigma\colon {E}^{\rm dec}\ra \mb{G}$ on  ${E}^{\rm dec}=[E\times H\rra E]$ is given, by a gauge transformation $\sigma_0\colon E\ra G$ on the $G$-bundle $E\ra X,$ as $\sigma(p)=\sigma_0(p), \sigma(p, h)=\bigl(h, \,\sigma_0(p)\bigr).$ Thus an extended gauge transformation $(\sigma, \lambda)$ is given by a classical gauge transformation $\sigma_0\colon E\ra G$ on the $G$-bundle   $E\ra X$ and a $G$-equivariant $L(H)$-valued $1$-form $\lambda\in  \Omega^{G}\bigl({E}, L(H)\bigr)$ on $E.$
	In \Cref{Ex:EoXodecobundleconnect} we have seen, any connection $\omega_0$ on the principal $G$-bundle $E\ra X$ gives a strict connection $\overline\omega:=(\omega^{\rm dec}, \omega_0)$. Under the action of $(\sigma, \lambda),$ the connection $\overline\omega$ transforms as:
	$$\overline\omega \mapsto \Ad_{\sigma}\overline \omega-(d\sigma)\sigma^{-1}+\overline\lambda.$$
	Which explicitly reads
	\begin{equation}\label{E:gaugehighergaue}
		\begin{split}
			\omega_0(p)&\mapsto \Ad_{\sigma_0(p)}\omega_0(p)-(d\sigma_0(p))\sigma_0(p)^{-1}+\tau(\lambda(p)),\\
			\omega^{\rm dec}(p,\, h) &\mapsto \Ad_{(h,\,\sigma_0(p))}\omega^{\rm dec}(p,\, h)-\biggl(d\bigl(h,\, \sigma_0(p)\bigr)\biggr)\bigl(h,\, \sigma_0(p)\bigr)^{-1}\\
			&\hskip 3 cm +\bigl(\tau(\lambda(p))-\lambda(p)+\Ad_h\lambda({p})\bigr).
		\end{split}
	\end{equation}
\end{example}

The gauge transformation obtained here in \Cref{E:gaugehighergaue}
is precisely the gauge transformation of a connection $1$-form in the higher gauge theories (see \cite{MR3645839} or \cite{martins2011lie}).
Note that the extended gauge transformation is a direct consequence of obstruction to the splitting of the Atiyah sequence in Diagram \ref{Dia:Obj-MorAtiyahsplitting}, and global in nature.
Of course, higher gauge theories also involve higher-order differential forms, which we have not dealt with here. In our follow-up paper, we will construct the  Chern-Weil theory for the framework developed here, which is expected to make the relation with higher gauge theories more transparent. 

\section*{Acknowledgements}
The first named author acknowledges research support from SERB, DST, Government of India grant MTR/2018/000528.

\bibliography{references}
\bibliographystyle{plain}

\end{document}